\documentclass[12pt]{amsart}
\usepackage{amsmath,amssymb,amsfonts,amsthm,amsopn,dsfont}
\usepackage{graphics}


\setlength{\textwidth}{13,5cm}
\setlength{\textheight}{20cm}

\setlength{\oddsidemargin}{0pt}
\setlength{\evensidemargin}{0pt}
\setlength{\textwidth}{148 mm}   


\newcommand{\tfa}{time-frequency analysis}

\newcommand{\ft}{Fourier transform}
\newcommand{\stft}{short-time Fourier transform}

\newcommand{\tf}{time-frequency}

\newcommand{\tfs}{time-frequency shift}

\newcommand{\aaf}{A_a^{\varphi_1,\varphi_2}}

\newcommand{\modsp}{modulation space}
\newcommand{\psdo}{pseudodifferential operator}

\newtheorem{theorem}{Theorem}[section]
\newtheorem{corollary}[theorem]{Corollary}
\newtheorem{definition}[theorem]{Definition}

\newtheorem{lemma}[theorem]{Lemma}
\numberwithin{equation}{section}

\newtheorem{proposition}[theorem]{Proposition}
\newtheorem{remark}[theorem]{Remark}

\newcommand{\beqa}{\begin{eqnarray*}}
\newcommand{\eeqa}{\end{eqnarray*}}

\newcommand{\field}[1]{\mathbb{#1}}
\newcommand{\bR}{\field{R}}        
\newcommand{\bZ}{\field{Z}}        
        %
        %



\def\la{\lambda}

 \def\cF{\mathcal{F}}              
 \def\cS{\mathcal{S}}

 \def\cC{\mathcal{C}}

 \def\cN{\mathcal{N}}

\def\vgf{V_gf}

\def\rd{\bR^d}

\def\rdd{{\bR^{2d}}}

\def\lrd{L^2(\rd)}
\def\lrdd{L^2(\rdd)}
\def\zd{\bZ^d}

\def\intrd{\int_{\rd}}
\def\intrdd{\int_{\rdd}}

\def\R{\right)}

\def\<{\left<}
\def\>{\right>}

\def\mv1{M_v^1}

\def\phas{(x,\o )}
\def\mn{(m,n)}
\def\mn'{(m',n')}

\hyphenation{Cara-theo-do-ry}
\hyphenation{Dau-be-chies}
\hyphenation{Barg-mann}
\hyphenation{dis-tri-bu-ti-ons}
\hyphenation{pseu-do-dif-fe-ren-tial}
\hyphenation{ortho-normal}



\def\o{\omega}

\def\Z{\mathbb{Z}^{d}}

\def\R{\mathbb{R}}
\def\Ren{\mathbb{R}^d}
\def\Renn{\mathbb{R}^{2d}}

\def\sch{\mathcal{S}}

\def\Fur{\mathcal{F}}

\def\f{\varphi}

\def\gaw{A_a^{\f_1,\f_2}}

\def\Sn2{S_{2}(L^{2}(\Ren))}
\def\S1{S_{1}(L^{2}(\Ren))}
\def\sig00{\sigma_{0,0}}

\def\la{\langle}
\def\ra{\rangle}




\begin{document}

\begin{abstract}
We completely characterize
the boundedness on $L^p$
spaces and on Wiener amalgam
spaces of the
\emph{short-time Fourier
transform} (STFT) and of a
special class of
pseudodifferential operators,
called \emph{localization
operators}. Precisely, a
well-known STFT boundedness
result on $L^p$ spaces is
proved to be sharp. Then,
sufficient conditions for the
STFT to be bounded on the
Wiener amalgam spaces
$W(L^p,L^q)$ are given and
their sharpness is shown.
Localization operators are
treated  similarly. Using
different techniques from
those employed in the
literature, we relax the
known sufficient boundedness
conditions for localization
operators on $L^p$ spaces
 and prove the
optimality of our results.
More generally, we prove
sufficient and necessary
conditions
 for such operators to be bounded on Wiener amalgam
 spaces.
\end{abstract}

\title[Sharp Results for the STFT and Localization Operators]{Sharp Continuity Results for the Short-Time Fourier Transform and for Localization Operators}\author{Elena Cordero and Fabio Nicola}
\address{Department of Mathematics,  University of Torino,
Via Carlo Alberto 10, 10123
Torino, Italy}
\address{Dipartimento di Matematica, Politecnico di
Torino, Corso Duca degli
Abruzzi 24, 10129 Torino,
Italy}
\thanks{The second author was partially supported by
the Progetto MIUR
Cofinanziato 2007 ``Analisi
Armonica''}
\email{elena.cordero@unito.it}
\email{fabio.nicola@polito.it}
\subjclass[2000]{35S05,46E30}
\date{}
\keywords{short-time Fourier transform, modulation spaces, Wiener
amalgam spaces, localization operators.}

\maketitle

\section{Introduction}

A fundamental issue in
Time-frequency analysis is
the so-called \emph{\stft}
(STFT). Precisely, consider
the linear operators of
translation and modulation
(so-called \tfs s) given by
\begin{equation}
  \label{eqi1}
 T_xf(t)=f(t-x)\quad{\rm and}\quad M_{\o}f(t)= e^{2\pi i \o
  t}f(t) \, .
\end{equation}
 For a non-zero window function $\f$   in
$L^2 (\rd )$,   the \stft\ (STFT) of a signal $f \in
 L^2 (\rd ) $ with respect to the window $\f$ is given by
 \begin{equation}
   \label{eqi2}
   V_\f f(x,\o)=\la f,M_\o T_x \f\ra =\int_{\Ren}
 f(t)\, {\overline {\f(t-x)}} \, e^{-2\pi i\o t}\,dt\, .
 \end{equation}
This  definition can  be extended to  pairs of dual topological
vector spaces and Banach spaces, whose duality, denoted by
$\la\cdot,\cdot\ra$, extends the inner product on $L^2(\rd)$.

The intuitive meaning of the
previous ``\tf"\,
representation can be
summarized as follows. If
$f(t)$ represents a signal
varying in time, its \ft \,
$\hat{f}(\o)$ shows the
distribution of its frequency
$\o$, without any additional
information about ``when"
these frequencies appear. To
overcome this problem, one
may choose a non-negative
window function $\f$
well-localized around the
origin. Then, the information
of the signal $f$ at the
instant $x$ can be obtained
by   shifting the window $\f$
till the  instant $x$ under
consideration, and by
computing the \ft\, of the
product
$f(x)\overline{\f(t-x)}$,
that localizes $f$ around the
instant time $x$. The decay
and smoothness of the STFT
have been widely investigated
 in the framework of $L^p$ spaces \cite{Lieb} and in  certain classes  of Wiener amalgam and modulation
spaces \cite{CG02,CK07,book}.
A basic result, proved in
Lieb \cite{Lieb}, is the
following one:
\vskip0.1truecm

\textit{For every $p\geq2$,
 $f\in L^r(\rd)$, $\f\in L^{r'}(\rd)$,
 with $p'\leq\min\{r,r'\}$, it turns out $V_\f f\in L^p(\rdd)$ and}
$$
\|V_\f f\|_{p} \leq
C\|\f\|_{r'} \|f\|_r.
$$
\noindent (As usual, $p'$ and
$r'$ are the conjugate
exponents of $p$ and $r$,
respectively). The previous
estimate is sharp (see the
subsequent Proposition
\ref{liebs}).\par A new
contribution of the present
paper is the study of the
boundedness of the STFT on
the Wiener amalgam spaces
$W(L^p,L^q)$, $1\leq
p,q\leq\infty$. We recall
that a measurable function
$f$ belongs to $W(L^p,L^q)$
if the following norm
\begin{equation}\label{wienerdis}\|f\|_{W(L^p,L^q)}= \left(\sum_{n\in\zd}\left(\intrd
|f(x)T_n\chi_\mathcal{Q}(x)|^p\right)^{\frac
qp}\right)^{\frac 1q},
\end{equation}
where  $\mathcal{Q}=[0,1)^d$
(with the usual adjustments
if $p=\infty$ or $q=\infty$)
is finite (see \cite{Heil03}
and Section 2 below). In
particular, $W(L^p,L^p)=L^p$.
For heuristic purposes,
functions in $W(L^p,L^q)$ may
be regarded as functions
which are locally in $L^p$
and decay at infinity like a
function in $L^q$.\\ A
simplified version of our
results (see Proposition
\ref{bstftprop}) can be
formulated  as follows.
\vskip0.1truecm \textit{
 Let $\f\in \cS(\R^d)$, $f\in W(L^p,L^q)(\R^d)$,  $1\leq q'\leq p\leq\infty$, $q\geq2$.
Then $V_\f f\in W(L^p,L^q)(\rdd)$, with the uniform
estimate}
\begin{equation}\label{bstft0}
\|V_\f f\|_{W(L^p,L^q)}\leq C_\f \|f\|_{W(L^p,L^q)}.
\end{equation}
\vskip0.1truecm The previous
estimate is \emph{optimal}.
Indeed, if \eqref{bstft0}
holds for a given non-zero
window function $\f \in
\cS(\rd)$, then $ p\geq q'$
and $q\geq2$ (see Proposition
\ref{optimalstft}).
\vskip0.3truecm The other
related topic of the present
paper is the study of
localization operators.
 The name  ``localization operator'' first
appeared in 1988, when
Daubechies \cite{Daube88}
used these operators as a
mathematical tool to localize
a signal on the
time-frequency plane. But
localization operators with
Gaussian windows were already
known in physics: they were
introduced as a quantization
rule by Berezin
\cite{Berezin71} in 1971 (the
so-called Wick operators).
Since their first appearance,
they have been extensively
studied as an important
mathematical tool in signal
analysis and other
applications (see
\cite{CG02,CG04,CR08,RT93,Wong02}
and references therein). We
also recall their employment
as approximation of \psdo s
(wave packets)
\cite{CF78,folland89}.
Localization operators are
also called Toeplitz
operators (see, e.g.,
\cite{DeNo02})  or \stft\,
multipliers \cite{Fei-Now02,
Weisz}. Their definition can
be given by means of the STFT
as follows.
\begin{definition}
The localization operator $\aaf $ with symbol $a\in \cS(\rdd)$ and
windows $\f _1, \f _2\in \cS(\rd)$ is defined   to be
\begin{equation}
  \label{eqi4}
\aaf f(t)=\int_{\Renn}a \phas V_{\f _1}f \phas M_\omega T_x \f _2
(t) \, dx d\o \,, \quad f\in\lrd.
\end{equation}
\end{definition}
The definition extends to more general
classes of symbols $a$, windows
$\f_1,\f_2$, and functions $f$ in
natural way. If $a = \chi _{\Omega }$
for some compact set $\Omega \subseteq
\rdd $ and $\f _1 = \f _2$, then $\aaf
$ is interpreted as the part of $f$
that ``lives on the set $\Omega $'' in
the \tf\ plane. This is why $\aaf $ is
called a   localization operator. If $a
\in \cS '(\rdd )$ and $\f _1, \f _2 \in
\cS (\rd )$, then \eqref{eqi4} is a
well-defined continuous operator from
$\cS (\rd )$ to $\cS ' (\rd )$. If $\f
_1(t)  = \f _2 (t) = e^{-\pi t^2}$,
then $A_a = \aaf $ is the classical
anti-Wick operator and the mapping $a
\to \aaf $ is interpreted as a
quantization
rule~\cite{Berezin71,Shubin91,Wong02}.

 In \cite{CG02,CG04} localization operators are viewed as a
multilinear mapping
\begin{equation}\label{multi}(a,\f _1, \f _2) \mapsto \aaf,
\end{equation} acting on   products of  symbol  and windows
spaces. The dependence of the localization operator $\aaf $ on all
three parameters has been  studied  there in different functional
frameworks.  The results in \cite{CG02} enlarge the ones in the
literature, concerning  $L^p$ spaces \cite{Wong02},
potential and Sobolev spaces \cite{BCG02}, modulation spaces
\cite{Fei-Now02,Toft04,Toftweight}. Other boundedness results for STFT multipliers on $L^p$, modulation, and Wiener amalgam spaces are contained in \cite{Weisz}.

On the footprints of  \cite{CG02}, the study   of localization
operators can be carried to  Gelfand-Shilov spaces and spaces of
ultra-distributions \cite{CRNP}. Finally, the results in
\cite{CR08}  widen \cite{CG02,CG04} interpreting the definition of
$\gaw$ in a weak sense, that is
\begin{equation}\label{anti-Wickg}
 \la \gaw f,g\ra=\la aV_{\f_1}f, V_{\f_2}g\ra=\la
 a,\overline{V_{\f_1}f}\,  V_{\f_2}g\ra,\quad f,g\in\sch(\Ren)\, .
\end{equation}
Here we study the action of
the operators $\gaw$ on the
Lebesgue spaces $L^p$ and on
Wiener amalgam spaces
$W(L^p,L^q)$, when $a$ is
also assumed to belong to
these spaces. For example,
one wonders what is the full
range of exponents $(q,r)$
such that for every $a\in
L^q$ the operator $\gaw$
turns out to be bounded on
$L^r$, for all windows
$\f_1,\f_2$ in reasonable
classes. Partial results in
this connection have been
obtained in
\cite{BOW06,BW04,Wong1999}.
We will give a complete
answer to this question and,
more generally, to a similar
question in the framework of
Wiener amalgam spaces.\par
Our present approach differs
from the
 preceding ones. In particular, the techniques
employed not use  Weyl
operators results as in
\cite{CG02,CRNP}. Indeed, we
rewrite $\gaw$ as an integral
operator. Next, we prove a
Schur-type test for the
boundedness of integral
operators on Wiener amalgam
spaces (see Proposition
\ref{proschur}) and we use it
to obtain sufficient
continuity results for $\gaw$
with symbol $a$ in Wiener
amalgam spaces and acting
either on $L^p$ or on
$W(L^p,L^q)$ (see Theorems
\ref{contlp},
\ref{contwiener} and
\ref{contlp3}). For instance,
the result for Lebesgue
spaces can be simplified as
follows (see Theorem
\ref{contlp3} for more
general windows):
\vskip0.1truecm \textit{Let
$a\in L^q(\rdd)$,
$\f_1,\f_2\in \cS(\Ren)$.
Then, $\gaw$ is bounded  on
$L^r(\Ren)$, for all $1\leq
p\leq\infty$, $\frac1q \geq
|\frac 1r-\frac12|$, with the
uniform estimate}
\begin{equation*}\|\gaw f \|_{r}\leq C_{\f_1,\f_2} \|a\|_{q}\|f\|_r.\end{equation*}
\vskip0.1truecm Moreover, the
boundedness results obtained
on both Lebesgue and Wiener
amalgam spaces reveal to be
sharp: see  Theorems \ref{p3}
and \ref{p4}. In particular,
the optimality for Lebesgue
spaces reads: \vskip0.1truecm
\textit{ Let $\f_1,\f_2\in
\cC^\infty_0(\rd)$,  with
$\f_1(0)=\f_2(0)=1$,
$\f_1\geq0,\ \f_2\geq0$.
Assume that for some $1\leq
q,r\leq\infty$ and every
$a\in L^q(\rdd)$ the operator
$\gaw$ is bounded on $L^r(\rd)$.
Then}
\begin{equation*}
\frac{1}{q}\geq\left|\frac{1}{r}-\frac{1}{2}\right|.
\end{equation*}
We end up  underlining that
similar  methods can be
applied to study the
boundedness of localization
operators on weighted $L^p$
and Wiener amalgam spaces as
well.\par\medskip\noindent
The paper is organized as
follows. Section 2 is devoted
to preliminary definitions
and properties of the
involved function spaces. In
Section 3 we study the
boundedness of the STFT
whereas Section 4 and 5 are
devoted to sufficient and
necessary conditions,
respectively, for boundedness
of localization operators.
Finally, in Section 6 we
present further refinements.

\vskip0.5truecm
\textbf{Notation.} To be definite, let us fix some notation we
shall use later on (and have already used in this Introduction).
We define  $xy=x\cdot y$, the
scalar product on $\Ren$. We define by $\cC^\infty_0(\rd)$ the space of smooth functions on $\rd$ with compact support.
The Schwartz class is denoted by  $\sch(\Ren)$, the space of
tempered distributions by  $\sch'(\Ren)$.   We use the brackets
$\la f,g\ra$ to denote the extension to $\sch (\Ren)\times\sch
'(\Ren)$ of the inner product $\la f,g\ra=\int f(t){\overline
{g(t)}}dt$ on $L^2(\Ren)$.  The Fourier transform is normalized to
be ${\hat
  {f}}(\o)=\Fur f(\o)=\int
f(t)e^{-2\pi i t\o}dt$.
Throughout the paper, we shall use the notation $A\lesssim B$, $A\gtrsim B$ to
indicate $A\leq c B$, $A\geq c B$ respectively,  for a suitable constant $c>0$, whereas $A
\asymp B$ if $A\leq c B$ and $B\leq k A$, for suitable $c,k>0$.
\section{Time-Frequency Methods}

First we summarize some concepts and tools  of \tfa , for an extended
exposition we refer to the  textbooks~\cite{folland89,book}.

\subsection{STFT properties}
The \stft\   (STFT) is defined in \eqref{eqi2}. The STFT  $\vgf $
is defined for $f,g$ in  many possible pairs of Banach spaces or
topological vector spaces.  For instance, it maps $L^2(\rd )
\times L^2(\rd )$ into $\lrdd $ and  $\sch(\Ren)\times\sch(\Ren)$
into $\sch(\Renn)$. Furthermore, it  can be extended  to a map
from $\sch(\Ren)\times\sch'(\Ren)$ into $\sch'(\Renn)$.
\smallskip
The crucial    properties of the STFT  (for proofs, see
\cite{book} and \cite{Lieb}) we shall use in the sequel are the
following.

\begin{lemma}\label{propstft}
Let $f,g,\in L^2(\Ren) $, then we have

(i)   (STFT of time-frequency shifts) For $ y,\xi \in \Ren
  $,  we have
  \begin{equation}
    \label{eql2}
   V_{ {g}}(M_\xi T_y{f})(x,\o)  = e^{-2\pi
     i(\o-\xi)y}(V_gf)(x-y,\o-\xi).
  \end{equation}

(ii)  (Orthogonality
relations for STFT)
\begin{equation}\label{eql4} \| V_{g}f \|_{L^2(\rdd)}=\|f\|_{L^2(\rd)}\|g\|_{L^2(\rd)}.
\end{equation}

(iii)  (Switching $f$ and
$g$)
\begin{equation}\label{switching}
(V_f g)(x,\o)=e^{-2\pi i x\o}
\overline{V_g f(-x,-\o)}.
\end{equation}

(iv) (Fourier transform of a
product of STFTs)
\begin{equation}
(\widehat{V_{\f_1}f
\overline{V_{\f_2}g}})(x,\omega)=(V_{g}f\overline{V_{\f_2}\f_1})(-\omega,x).
\end{equation}

(v)   (STFT of the Fourier transforms)
  \begin{equation}
    \label{eql2op}
   V_{ {g}}f(x,\o)  = e^{-2\pi
     ix\o} V_{\hat g}{\hat f}(\o,-x).
  \end{equation}

\end{lemma}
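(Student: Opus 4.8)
\medskip

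The plan is to read off all five identities from the single defining formula $V_g f(x,\omega)=\langle f, M_\omega T_x g\rangle=\int_{\Ren} f(t)\,\overline{g(t-x)}\,e^{-2\pi i\omega t}\,dt=\mathcal{F}\big(f\cdot\overline{T_x g}\big)(\omega)$, together with three elementary ingredients: the pointwise rule $M_\xi T_y f(t)=e^{2\pi i\xi t}f(t-y)$, the commutation relation $T_y M_\xi=e^{-2\pi i\xi y}M_\xi T_y$, and the intertwining of the Fourier transform with time-frequency shifts, namely $\widehat{T_x g}=M_{-x}\hat g$ and $\widehat{M_\omega g}=T_\omega\hat g$, whence $\widehat{M_\omega T_x g}=e^{2\pi i x\omega}M_{-x}T_\omega\hat g$. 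Each of these ingredients is itself a one-line change of variable in the Fourier integral.

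For (i) I would substitute $M_\xi T_y f$ into the STFT integral and change variables $s=t-y$; the modulation $M_\xi$ and the translation of the window combine to produce precisely the prefactor $e^{-2\pi i(\omega-\xi)y}$ together with the shifted arguments $x-y$ and $\omega-\xi$. For (iii) I would start from $\overline{V_g f(-x,-\omega)}$, write it as an integral, and substitute $s=t+x$; collecting the resulting phase gives the factor $e^{-2\pi i x\omega}$ and identifies the remaining integral as $V_f g(x,\omega)$. For (v) I would apply Parseval to $\langle f, M_\omega T_x g\rangle$ to pass to $\langle\hat f,\widehat{M_\omega T_x g}\rangle$, insert the intertwining formula above, and reorder $T_\omega M_{-x}$ by the commutation relation; the phase $e^{-2\pi i x\omega}$ drops out and the inner product is recognized as $V_{\hat g}\hat f(\omega,-x)$.

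Identity (ii) is the diagonal case of Moyal's formula. Using $V_g f(x,\cdot)=\mathcal{F}\big(f\cdot\overline{T_x g}\big)$, I would apply Plancherel in the frequency variable for each fixed $x$ to obtain $\int_{\Ren}|V_g f(x,\omega)|^2\,d\omega=\int_{\Ren}|f(t)|^2\,|g(t-x)|^2\,dt$, then integrate in $x$ and invoke Fubini, which factors the double integral into $\|f\|_2^2\,\|g\|_2^2$.

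The only genuinely computational step is (iv). Here I would expand $V_{\varphi_1}f$ and $\overline{V_{\varphi_2}g}$ as integrals over separate variables $t$ and $s$, so that their product becomes a double integral with kernel $f(t)\,\overline{g(s)}\,\overline{\varphi_1(t-u)}\,\varphi_2(s-u)\,e^{-2\pi i v(t-s)}$. Taking the two-dimensional Fourier transform in $(u,v)$ and integrating first in $v$ yields, by Fourier inversion, a factor enforcing $s=t+\omega$, which removes the $s$-integration; after the substitution $w=t-u$ the remaining integral over $(t,w)$ factorizes into the product of $V_g f(-\omega,x)$ and $\overline{V_{\varphi_2}\varphi_1(-\omega,x)}$, which is exactly the right-hand side of (iv). I expect no analytic difficulty here beyond Fubini and Fourier inversion: the whole obstacle is the bookkeeping of the several phase factors and the correct matching of each surviving integral to an STFT with the right arguments and conjugation. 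All manipulations are justified first on $\mathcal{S}(\Ren)$ and then extended to the stated $L^2$ setting by density and duality.
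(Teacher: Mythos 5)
Your proposal is correct, and it follows essentially the standard route: the paper itself states Lemma \ref{propstft} without proof, deferring to \cite{book} and \cite{Lieb}, and your arguments (change of variables for (i) and (iii), Plancherel plus Fubini for Moyal's formula (ii), Parseval with the commutation relation $T_yM_\xi=e^{-2\pi i\xi y}M_\xi T_y$ for (v), and the double-integral/Fourier-inversion computation for (iv)) are exactly the ones found in those references. The only point needing the care you already flag is that Plancherel in (ii) and the Fubini/inversion steps in (iv) should first be carried out for $f,g$ in $\cS(\rd)$ and then extended to $L^2$ by density.
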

\begin{proposition}\label{lieb}
Suppose $p\geq 2$, $f\in
L^r(\rd)$, $g\in
L^{r'}(\rd)$, with $p'\leq
\min\{r,r'\}$. Then $V_g f\in
L^p(\rdd)$ and
\begin{equation}\label{liebest}
\|V_gf\|_{p} \leq \sqrt{\frac{p'^{\frac1{p'}}}{p^{\frac1p}}} \,\|g\|_{r'}\|f\|_r.
\end{equation}
\end{proposition}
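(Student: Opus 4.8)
The plan is to prove the pure $L^p(\rdd)$ estimate directly, by iterated integration first in $\o$ and then in $x$, using a sharp scalar inequality at each step; this simultaneously covers the whole admissible range and produces the stated constant. The key observation is that, for fixed $x$, the section $\o\mapsto V_gf(x,\o)$ is exactly the Fourier transform of $f\,\overline{T_xg}$, i.e. $V_gf(x,\o)=\Fur\big(f\,\overline{T_xg}\big)(\o)$. Since $p\geq2$ gives $p'\leq2$, I would apply the sharp Hausdorff--Young inequality (Babenko--Beckner) in $\o\in\rd$,
\begin{equation*}
\Big(\int_{\rd}\abs{V_gf(x,\o)}^{p}\,d\o\Big)^{1/p}\leq B_p\,\Big(\int_{\rd}\abs{f(t)}^{p'}\abs{g(t-x)}^{p'}\,dt\Big)^{1/p'},\qquad B_p=\Big(\tfrac{p'^{1/p'}}{p^{1/p}}\Big)^{d/2},
\end{equation*}
and denote by $\Phi(x)$ the inner integral on the right, so that $\Phi=\abs{f}^{p'}\ast\widetilde{\abs{g}^{p'}}$ is a convolution, with $\widetilde h(s)=h(-s)$.

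Next I would integrate in $x$. Since $\norm{V_gf(x,\cdot)}_{L^p_\o}^{p}=\big(\norm{V_gf(x,\cdot)}_{L^p_\o}^{p'}\big)^{p/p'}$ and $p/p'\geq1$, the previous display gives $\norm{V_gf}_{p}^{p}\leq B_p^{\,p}\,\norm{\Phi}_{L^{p/p'}(\rd)}^{p/p'}$. The crux is then a single application of Young's convolution inequality to $\Phi$ with exponents $\alpha=r/p'$ and $\beta=r'/p'$, and the hypotheses are tailored precisely to make it work: the condition $p'\leq\min\{r,r'\}$ guarantees $\alpha,\beta\geq1$, while the conjugacy $1/r+1/r'=1$ yields $1/\alpha+1/\beta=p'(1/r+1/r')=p'=1+p'/p$, which is exactly the balance needed for convolution into $L^{p/p'}$. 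Using $\norm{\,\abs{f}^{p'}}_{r/p'}=\norm{f}_r^{p'}$ and $\norm{\widetilde{\abs{g}^{p'}}}_{r'/p'}=\norm{g}_{r'}^{p'}$, Young gives $\norm{\Phi}_{p/p'}\leq\norm{f}_r^{p'}\norm{g}_{r'}^{p'}$, and substituting yields $\norm{V_gf}_{p}\leq B_p\,\norm{f}_r\norm{g}_{r'}$.

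The subtle point, and the reason I would route the argument through the \emph{sharp} Hausdorff--Young constant rather than through interpolation, is the constant itself. Mere boundedness on the stated range follows already from the ordinary Hausdorff--Young and Young inequalities (constant $1$ at each step), or equivalently by bilinear Riesz--Thorin interpolation between the endpoint $(r,r',p)=(2,2,2)$ (orthogonality, Lemma \ref{propstft}(ii)) and the H\"older endpoints $(r,r',\infty)$ with $1/r+1/r'=1$, which together span exactly $\{p\geq2,\ p'\leq\min\{r,r'\}\}$; but interpolation only delivers a constant $\leq1$. The sharp value requires the Babenko--Beckner constant in the first step, and this is the main obstacle to a clean self-contained argument. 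For $d=1$ one recovers precisely $\sqrt{p'^{1/p'}/p^{1/p}}$; for $d\geq2$ the computed constant $B_p$ is even smaller, since the base $p'^{1/p'}/p^{1/p}$ lies in $(0,1]$ for all $p\geq2$, so the stated bound holds a fortiori. As a sanity check, at $p=2$ one has $B_2=1$ and the estimate degenerates to the orthogonality relation $\norm{V_gf}_2=\norm{f}_2\norm{g}_2$.
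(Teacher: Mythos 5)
Your argument is correct. The paper itself offers no proof of Proposition \ref{lieb}: it is stated as a known result and referred to \cite{Lieb} and \cite{book}, so the only meaningful comparison is with Lieb's original argument, and yours is essentially that route in its simplest form. The two steps --- sharp Hausdorff--Young applied to the section $V_gf(x,\cdot)=\Fur\big(f\,\overline{T_xg}\big)$, followed by Young's convolution inequality for $\Phi=\abs{f}^{p'}\ast\widetilde{\abs{g}^{p'}}$ with exponents $r/p'$, $r'/p'$, $p/p'$ --- are exactly the skeleton of Lieb's proof; the exponent bookkeeping ($1/a+1/b=p'=1+p'/p$, and $p'\leq\min\{r,r'\}$ forcing $a,b\geq1$) is right, as is the verification that $fT_x\bar g\in L^{p'}$ for a.e.\ $x$ via the finiteness of $\Phi$. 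The one place where you genuinely diverge from Lieb is the second step: Lieb uses the \emph{sharp} Young inequality (Beckner--Brascamp--Lieb constants) there too, which yields a strictly better constant such as $(2/p)^{d}$ in the case $r=r'=2$; your use of ordinary Young (constant $1$) gives only $B_p=(p'^{1/p'}/p^{1/p})^{d/2}$, but since $p'^{1/p'}\leq p^{1/p}$ for $p\geq2$ this is $\leq\big(p'^{1/p'}/p^{1/p}\big)^{1/2}$ for all $d\geq1$, so the stated (dimension-free) bound follows a fortiori, exactly as you observe. Two cosmetic points: the case $p=\infty$ should be read with the usual sup-norm conventions rather than the displayed $p$-th power identity, and the closing paragraph about recovering the region by bilinear interpolation is a correct but inessential aside.
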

\noindent The following inequality is proved in \cite[Lemma
11.3.3]{book}:
\begin{lemma}\label{convstft}
Let $g_0,g,\gamma\in\cS(\rd)$ such that $\la \gamma,g\ra\not=0$
and let $f\in\cS'(\rd)$. Then
$$|V_{g_0}f\phas|\leq \frac1{|\la \gamma,g\ra|}(|V_g
f|\ast|V_{g_0}\gamma|)\phas,
$$
for all $\phas \in \rdd$.
\end{lemma}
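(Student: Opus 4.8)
The plan is to prove Lemma~\ref{convstft} by the standard change-of-window device of time-frequency analysis, together with an application of the orthogonality relations in Lemma~\ref{propstft}(ii). The key identity I would use is the reproducing formula for the STFT: for windows $g,\gamma$ with $\la\gamma,g\ra\neq0$, one has the inversion-type relation
\begin{equation*}
V_{g_0}f(x,\o)=\frac{1}{\la\gamma,g\ra}\int_{\rdd}V_gf(y,\eta)\,V_{g_0}\gamma(x-y,\o-\eta)\,e^{-2\pi i(x-y)\eta}\,dy\,d\eta,
\end{equation*}
which is obtained by expanding $f$ via the resolution of the identity $f=\frac{1}{\la\gamma,g\ra}\int_{\rdd}V_gf(y,\eta)\,M_\eta T_y\gamma\,dy\,d\eta$ and then applying $V_{g_0}$ to both sides, using the covariance property \eqref{eql2}.

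First I would verify the reproducing formula holds in the appropriate weak sense for $g_0,g,\gamma\in\cS(\rd)$ and $f\in\cS'(\rd)$; this is where the hypotheses $g,\gamma,g_0\in\cS$ guarantee all the integrals converge and the duality pairings make sense. The orthogonality relation \eqref{eql4} (or rather its sesquilinear polarization) is what justifies inserting the identity operator built from the coherent states $M_\eta T_y\gamma$ and recovering $\la\gamma,g\ra$ as the normalizing constant. Next, from the reproducing formula I would pass to absolute values: since $|e^{-2\pi i(x-y)\eta}|=1$, taking moduli inside the integral yields
\begin{equation*}
|V_{g_0}f(x,\o)|\leq\frac{1}{|\la\gamma,g\ra|}\int_{\rdd}|V_gf(y,\eta)|\,|V_{g_0}\gamma(x-y,\o-\eta)|\,dy\,d\eta,
\end{equation*}
and the right-hand side is precisely $\frac{1}{|\la\gamma,g\ra|}\bigl(|V_gf|\ast|V_{g_0}\gamma|\bigr)(x,\o)$, which is the desired pointwise bound.

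The main obstacle I anticipate is making the reproducing formula rigorous when $f$ is merely a tempered distribution rather than an $L^2$ function. For $f\in L^2$ everything follows cleanly from Lemma~\ref{propstft}(ii), but for $f\in\cS'(\rd)$ one must interpret $V_gf$ as a (smooth, polynomially bounded) function and justify the weak-$*$ convergence of the coherent-state expansion against test functions in $\cS(\rd)$. The smoothness and rapid decay of the Schwartz windows $g_0,g,\gamma$ are exactly what is needed to control these pairings and to guarantee that $V_{g_0}\gamma$ decays fast enough for the convolution on the right-hand side to be well defined; this is the technical heart of the argument, and it is precisely the content cited from \cite[Lemma~11.3.3]{book}, so I would lean on that reference for the distributional bookkeeping rather than reprove it from scratch.
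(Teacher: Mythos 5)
Your argument is correct and coincides with the standard proof of this result; the paper itself gives no proof of Lemma \ref{convstft}, simply citing \cite[Lemma 11.3.3]{book}, whose proof is exactly the change-of-window argument you outline (expand $f$ by the STFT inversion formula with the pair $(\gamma,g)$, apply $V_{g_0}$ using the covariance property \eqref{eql2}, and take moduli of the resulting twisted convolution). The only cosmetic point is that with the paper's conventions the unimodular phase in the reproducing identity is $e^{-2\pi i(\o-\eta)y}$ rather than $e^{-2\pi i(x-y)\eta}$, which of course does not affect the inequality.
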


\subsection{Function Spaces}\label{FS}
For $1\leq p\leq \infty$, recall the $\cF L^p$ spaces, defined by
$$\cF L^p(\rd)=\{f\in\cS'(\rd)\,:\, \exists \,h\in L^p(\rd),\,\hat h=f\};
$$
they are Banach spaces equipped with the norm
\begin{equation}\label{flp}
\| f\|_{\cF L^p}=\|h\|_{L^p},\quad\mbox{with} \,\hat h=f.
\end{equation}
\par

The   mixed-norm space $L^{p,q}(\rdd)$, $1\leq p,q\leq\infty$, consists of all measurable functions on $\rdd$ such that the norm
\begin{equation}\label{normamista}\|F\|_{L^{p,q}}=\left(\intrd\left(\intrd |F\phas|^p dx\right)^{\frac qp} d\o\right)^{\frac1q}
\end{equation}
(with obvious modifications when $p=\infty$ or $q=\infty$)
is finite.
\par

The function spaces  $L^pL^q(\rdd)$, $1\leq p,q\leq\infty$, consists of all measurable functions on $\rdd$ such that the norm
\begin{equation}\label{normainversa}\|F\|_{L^pL^q}=\left(\intrd\left(\intrd |F\phas|^q d\o\right)^{\frac pq} dx\right)^{\frac1q}
\end{equation}
(with obvious modifications when $p=\infty$ or $q=\infty$)
is finite. Notice that, for $p=q$, we have  $L^pL^p(\rdd)=L^{p,p}(\rdd)=L^p(\rdd)$.

\textbf{Wiener amalgam spaces}. We
briefly recall the definition and the
main properties of Wiener amalgam
spaces. We refer to
\cite{feichtinger80,feichtinger83,
feichtinger90,Fei98,fournier-stewart85,Heil03}
for details.\par
 Let $g \in
\cC_0^\infty$ be a test function that
satisfies $\|g\|_{L^2}=1$. We will
refer to $g$ as a window function. Let
$B$ one of the following Banach spaces:
$L^p, \cF L^p$, $L^{p,q}$, $L^pL^q$,
$1\leq p,q\leq \infty$. Let $C$ be one
of the following Banach spaces: $L^p$,
$L^{p,q}$, $L^pL^q$, $1\leq
p,q\leq\infty$. For any given temperate
distribution $f$ which is locally in
$B$ (i.e. $g f\in B$, $\forall
g\in\cC_0^\infty$), we set $f_B(x)=\|
fT_x g\|_B$.

The {\it Wiener amalgam space} $W(B,C)$
with local component $B$ and global
component  $C$ is defined as the space
of all temperate distributions $f$
locally in $B$ such that $f_B\in C$.
Endowed with the norm
$\|f\|_{W(B,C)}=\|f_B\|_C$, $W(B,C)$ is
a Banach space. Moreover, different
choices of $g\in \cC_0^\infty$ generate
the same space and yield equivalent
norms.

If  $B=\Fur L^1$ (the Fourier
algebra),  the space of
admissible windows for the
Wiener amalgam spaces $W(\Fur
L^1,C)$ can be enlarged to
the so-called Feichtinger
algebra $W(\Fur L^1,L^1)$.
Recall  that the Schwartz
class $\sch$
  is dense in $W(\Fur L^1,L^1)$.\par
   An equivalent norm for the space  $W(L^p,L^q)$ is provided by \eqref{wienerdis}.\par
We use the following
definition of mixed Wiener
amalgam norms. Given a
measurable function $F$ on $\rdd$ we
set
\[
\|F\|_{W(L^{p_1},L^{q_1})W(L^{p_2},L^{q_2})}= \|
\|F(x,\cdot)\|_{W(L^{p_2},L^{q_2})}\|_{W(L^{p_1},L^{q_1})}.
\]
The following properties of
Wiener amalgam spaces  will
be frequently used in the
sequel.
\begin{lemma}\label{WA}
  Let $B_i$, $C_i$, $i=1,2,3$, be Banach spaces  such that $W(B_i,C_i)$ are well
  defined. Then,
  \begin{itemize}
  \item[(i)] \emph{Convolution.}
  If $B_1\ast B_2\hookrightarrow B_3$ and $C_1\ast
  C_2\hookrightarrow C_3$, we have
  \begin{equation}\label{conv0}
  W(B_1,C_1)\ast W(B_2,C_2)\hookrightarrow W(B_3,C_3).
  \end{equation}
  \item[(ii)]\emph{Inclusions.} If $B_1 \hookrightarrow B_2$ and $C_1 \hookrightarrow C_2$,
   \begin{equation*}
   W(B_1,C_1)\hookrightarrow W(B_2,C_2).
  \end{equation*}
  \noindent Moreover, the inclusion of $B_1$ into $B_2$ need only hold ``locally'' and the inclusion of $C_1 $ into $C_2$  ``globally''.
   In particular, for $1\leq p_i,q_i\leq\infty$, $i=1,2$, we have
  \begin{equation}\label{lp}
  p_1\geq p_2\,\mbox{and}\,\, q_1\leq q_2\,\Longrightarrow W(L^{p_1},L^{q_1})\hookrightarrow
  W(L^{p_2},L^{q_2}).
  \end{equation}
  \item[(iii)]\emph{Complex interpolation.} For $0<\theta<1$, we
  have
\[
  [W(B_1,C_1),W(B_2,C_2)]_{[\theta]}=W\left([B_1,B_2]_{[\theta]},[C_1,C_2]_{[\theta]}\right),
  \]
if $C_1$ or $C_2$ has
absolutely continuous norm.
    \item[(iv)] \emph{Duality.}
    If $B',C'$ are the topological dual spaces of the Banach spaces $B,C$ respectively, and
    the space of test functions $\cC_0^\infty$ is dense in both $B$ and $C$, then
\begin{equation}\label{duality}
W(B,C)'=W(B',C').
\end{equation}
 \item[(v)] \emph{Hausdorff-Young.} If $1\leq p,q\leq 2$ then
  \begin{equation}\label{HY}
\cF(W(L^p,L^q))\hookrightarrow W(L^{q'},L^{p'})
\end{equation}
(local and global properties are interchanged on the Fourier side).
 \item[(vi)] \emph{Pointwise products.}
  If $B_1\cdot B_2\hookrightarrow B_3$ and $C_1\cdot
  C_2\hookrightarrow C_3$, we have
  \begin{equation}\label{point0}
  W(B_1,C_1)\cdot W(B_2,C_2)\hookrightarrow W(B_3,C_3).
  \end{equation}
  \end{itemize}
  \end{lemma}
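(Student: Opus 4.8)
These six properties are standard in the theory of Wiener amalgam spaces, and the plan is to organize the argument around the observation that each global component $C$ is a solid (lattice) Banach function space, so that \emph{pointwise} control of the local norms $f_B(x)=\|fT_xg\|_B$ transfers directly to control of the $C$-norms. The foundational and hardest item is the convolution relation (i), from which the techniques for (vi) and (v) are then easily adapted.

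For (i), the plan is to fix a convenient admissible window (legitimate since different windows yield equivalent norms) and to reduce the continuous convolution to a pointwise estimate of the form $(f\ast h)_{B_3}(x)\lesssim (f_{B_1}\ast h_{B_2})(x)$. I would realize this by choosing a bounded uniform partition of unity $\{T_n\psi\}_{n\in\zd}$ subordinate to the lattice of unit cubes and writing $f=\sum_n fT_n\psi$, $h=\sum_m hT_m\psi$; when localizing $f\ast h$ by the window $T_xg$, only the pairs $(n,m)$ lying in a fixed neighborhood of $x$ contribute, by support considerations, and the local hypothesis $B_1\ast B_2\hookrightarrow B_3$ bounds each such contribution. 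Summing reassembles the convolution of the scalar profiles $f_{B_1}$ and $h_{B_2}$, and the global hypothesis $C_1\ast C_2\hookrightarrow C_3$ closes the estimate. \textbf{The main obstacle is exactly here}: keeping the constants uniform in $x$ while bookkeeping the interaction between the local ($B$) and global ($C$) indices, i.e. checking that the number of interacting cubes is bounded independently of $x$ and that the partition pieces can be recombined without loss.

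Property (vi) follows by the same scheme but is lighter, since pointwise multiplication is purely local: factoring the window as a product and applying $B_1\cdot B_2\hookrightarrow B_3$ yields $(fh)_{B_3}(x)\lesssim f_{B_1}(x)\,h_{B_2}(x)$ pointwise, after which $C_1\cdot C_2\hookrightarrow C_3$ gives the claim. The inclusions (ii) are immediate from solidity: the local embedding $B_1\hookrightarrow B_2$ gives $f_{B_2}(x)\lesssim f_{B_1}(x)$ for every $x$, and then the global embedding $C_1\hookrightarrow C_2$ yields $\|f_{B_2}\|_{C_2}\lesssim\|f_{B_1}\|_{C_1}$; the special case \eqref{lp} is the instance where $L^{p_1}\hookrightarrow L^{p_2}$ holds \emph{locally} for $p_1\geq p_2$ and $\ell^{q_1}\hookrightarrow\ell^{q_2}$ holds \emph{globally} for $q_1\leq q_2$.

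For the remaining three, I would exploit that $W(B,C)$ is a retract of a vector-valued function space via $f\mapsto(x\mapsto fT_xg)$ together with a reconstruction operator built from a dual window. Complex interpolation (iii) then follows because interpolation commutes with retracts and with the vector-valued (Calder\'on) construction, the hypothesis that $C_1$ or $C_2$ has absolutely continuous norm being precisely what is needed to interpolate the global component. Duality (iv) follows the same way, using density of $\cC_0^\infty$ to identify the dual of the retract and obtain $W(B,C)'=W(B',C')$. Finally, Hausdorff--Young (v) rests on the Fourier-swap identity $\cF W(B,C)=W(\cF C,\cF B)$, which interchanges local and global components; applying it with $B=L^p$, $C=L^q$ and then invoking the classical embeddings $\cF L^q\hookrightarrow L^{q'}$ and $\cF L^p\hookrightarrow L^{p'}$ (valid since $p,q\leq2$) together with (ii) gives $\cF W(L^p,L^q)=W(\cF L^q,\cF L^p)\hookrightarrow W(L^{q'},L^{p'})$.
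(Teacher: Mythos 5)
The paper does not actually prove this lemma: it is a collection of known facts imported verbatim from the literature (Feichtinger's papers on Wiener-type spaces and interpolation, Fournier--Stewart, Heil), so there is no in-paper argument to match yours against. For items (i), (ii), (iii), (iv) and (vi) your outline is essentially the standard one from those references: the BUPU decomposition and the pointwise control $(f\ast h)_{B_3}(x)\lesssim (f_{B_1}\ast h_{B_2})(x)$ for convolution, solidity for the inclusions and pointwise products, and the retract onto a vector-valued sequence space for interpolation and duality. One caveat on (i): the hypotheses $B_1\ast B_2\hookrightarrow B_3$, $C_1\ast C_2\hookrightarrow C_3$ alone are not quite enough; the standard proofs also use that the components are solid and translation-invariant with uniformly bounded translations (true for all the concrete spaces $L^p$, $\cF L^p$, $L^{p,q}$, $L^pL^q$ used in this paper), and you are right that the uniform-in-$x$ bookkeeping of overlapping cubes is where the work is.

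The step that would fail as written is (v). The ``Fourier-swap identity'' $\cF W(B,C)=W(\cF C,\cF B)$ is not a theorem for general components; indeed $W(L^{q'},\cF L^{p})$-type objects are not even well-defined amalgams in this framework, since the global component must be a solid space, and $\cF L^p$ is not. The exact Fourier interchange holds only for amalgams whose \emph{local} component is already a Fourier image, namely $\cF\left(W(\cF L^{p},L^q)\right)=W(\cF L^{q},L^{p})$. The correct derivation of \eqref{HY} is the chain
\[
W(L^p,L^q)\hookrightarrow W(\cF L^{p'},L^q)
\xrightarrow{\ \cF\ } W(\cF L^{q},L^{p'})
\hookrightarrow W(L^{q'},L^{p'}),
\]
where the first and last inclusions are the \emph{local} Hausdorff--Young embeddings $L^p\hookrightarrow \cF L^{p'}$ and $\cF L^q\hookrightarrow L^{q'}$, valid precisely because $p,q\leq 2$, combined with item (ii); alternatively one can verify the four endpoints $(p,q)\in\{(1,1),(2,2),(1,2),(2,1)\}$ directly and use your item (iii). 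With that repair the proposal is a faithful reconstruction of the cited proofs.
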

The following result will be useful in the sequel.
\begin{lemma}[{\cite[Lemma 5.4]{CN3}}]
Let $\f_{a+ib}(x)=e^{-\pi
(a+ib)|x|^2}$, $a>0, b\in\R$, $x\in\R^d$. Then,
\begin{equation}
\label{dd1}
\|\f_{a+ib}\|_{W(L^{p},L^{q})}
\asymp a^{-
\frac{d}{2q}}(a+1)^{\frac{d}{2}\left(\frac{1}{q}-\frac{1}{p}\right)}.
\end{equation}
\end{lemma}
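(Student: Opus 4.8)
The plan is to reduce the statement to an elementary one-dimensional Gaussian estimate. First observe that $|\varphi_{a+ib}(x)| = e^{-\pi a|x|^2}$ is independent of $b$, and since the Wiener amalgam norm depends only on $|f|$, it suffices to estimate $\|e^{-\pi a|x|^2}\|_{W(L^p,L^q)}$. I would work with the equivalent discrete norm \eqref{wienerdis}, for which $\mathcal{Q}=[0,1)^d$ and $n+\mathcal{Q}=\prod_{j=1}^d[n_j,n_j+1)$.

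The key simplification is that the norm \eqref{wienerdis} tensorizes on product functions. Writing $e^{-\pi a|x|^2}=\prod_{j=1}^d e^{-\pi a x_j^2}$, the local $L^p$ integral over $n+\mathcal{Q}$ factors over coordinates, and carrying this factorization through the outer $\ell^q$ sum yields
\begin{equation*}
\|e^{-\pi a|x|^2}\|_{W(L^p,L^q)(\rd)}=\left(\|e^{-\pi a t^2}\|_{W(L^p,L^q)(\R)}\right)^d.
\end{equation*}
Since every exponent on the right-hand side of \eqref{dd1} is proportional to $d$, this reduces the claim to the case $d=1$.

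For $d=1$, set $I_n=\int_n^{n+1}e^{-\pi a p t^2}\,dt$, so that by evenness of the Gaussian one has $\|e^{-\pi a t^2}\|_{W(L^p,L^q)}^q\asymp\sum_{n\geq 0}I_n^{q/p}$. I would treat the regimes $a\geq 1$ and $a\leq 1$ separately. The leading term $I_0$ satisfies $I_0\asymp(1+a)^{-1/2}$, as seen via the substitution $u=\sqrt{ap}\,t$ and distinguishing the cases $ap\leq 1$ and $ap\geq 1$. For $a\geq 1$ the remaining terms obey $\sum_{n\geq 1}I_n^{q/p}\leq\sum_{n\geq 1}e^{-\pi a q n^2}$, which decays exponentially and is hence negligible compared with $I_0^{q/p}\asymp a^{-q/(2p)}$; this gives $\|e^{-\pi a t^2}\|_{W(L^p,L^q)}\asymp a^{-1/(2p)}$, matching \eqref{dd1} for $a\geq 1$. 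For $a\leq 1$, the monotonicity bounds $e^{-\pi a p(n+1)^2}\leq I_n\leq e^{-\pi a p n^2}$ squeeze $\sum_{n\geq 1}I_n^{q/p}$ between two shifted copies of $\sum_n e^{-\pi a q n^2}$.

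The main technical point is the uniform comparison $\sum_{n\in\bZ}e^{-\pi a q n^2}\asymp\max\{1,a^{-1/2}\}$, valid for all $a>0$ with constants independent of $a$; this is the standard integral/Riemann-sum (equivalently theta-function) estimate, and it forces $\sum_{n\geq 1}I_n^{q/p}\asymp a^{-1/2}$ for $a\leq 1$, whence $\|e^{-\pi a t^2}\|_{W(L^p,L^q)}\asymp a^{-1/(2q)}$. Combining the two regimes produces exactly $a^{-1/(2q)}(a+1)^{\frac12(1/q-1/p)}$ in dimension one, and raising to the $d$-th power completes the proof. The only delicate point is keeping all implied constants uniform in $a$ across the transition at $a\asymp 1$, which is precisely what the Gaussian-sum estimate delivers.
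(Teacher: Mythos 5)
The paper does not actually prove this lemma: it is imported verbatim from \cite[Lemma 5.4]{CN3}, so there is no in-paper argument to compare yours against. Judged on its own, your proof is correct and self-contained. The reduction to $|\varphi_{a+ib}|=e^{-\pi a|x|^2}$ is legitimate because the $W(L^p,L^q)$ norm depends only on $|f|$, the discrete norm \eqref{wienerdis} does tensorize exactly over coordinates for product functions, and every exponent on the right-hand side of \eqref{dd1} scales linearly in $d$, so the reduction to $d=1$ is clean. The one-dimensional analysis is also sound: $I_0\asymp(1+a)^{-1/2}$, the tail $\sum_{n\ge1}I_n^{q/p}$ is exponentially negligible for $a\ge1$ and is $\asymp a^{-1/2}$ for $a\le1$ by the squeeze between shifted Gaussian sums together with the theta-function comparison $\sum_{n\in\mathbb{Z}}e^{-\pi aqn^2}\asymp\max\{1,a^{-1/2}\}$; the two regimes glue to $a^{-1/(2q)}(a+1)^{\frac12(1/q-1/p)}$. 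This matches in spirit the computation in \cite{CN3}, which likewise splits into the regimes $a\ge1$ (the Gaussian is concentrated in one cube, so the norm behaves like $\|\varphi_a\|_{L^p}\asymp a^{-d/(2p)}$) and $a\le1$ (the Gaussian is slowly varying, so the norm behaves like $\|\varphi_a\|_{L^q}\asymp a^{-d/(2q)}$). The only loose end is that you should state explicitly the routine modifications when $p=\infty$ (replace $I_n$ by $\sup_{[n,n+1)}e^{-\pi at^2}=e^{-\pi an^2}$ for $n\ge0$) and when $q=\infty$ (the outer sum becomes a supremum, attained at $n=0$); both cases confirm \eqref{dd1}, but as written your argument silently assumes $p,q<\infty$.
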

Finally we establish a useful estimate
for Wiener amalgam spaces, based on the
 the
classical Bernstein's
inequality (see, e.g.,
\cite{wolff}), that we are
going to recall first. Let
$B(x_0,R)$ be the ball of
center $x_0\in\rd$ and radius
$R>0$ in $\R^d$.
\begin{lemma}[Bernstein's inequality]\label{bernstein} Let $f\in\cS'(\rd)$
such that  $\hat{f}$ is supported in
$B(x_0,R)$, and let  $1\leq p\leq
q\leq\infty$. Then, there exists a
positive constant $C$, independent of
$f$, $x_0$, $R$, $p,$ $q$, such that
\begin{equation}\label{bernsteineq}
\|f\|_q\leq C
R^{d\left(\frac1p-\frac1q\right)}\|f\|_p.
\end{equation}
\end{lemma}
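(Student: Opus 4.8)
The plan is to normalize the hypothesis to the case $x_0=0$, $R=1$ using the two symmetries that preserve the structure of the problem — modulation and dilation — and then to settle the normalized case by a reproducing identity combined with Young's inequality.

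First I would eliminate the center $x_0$. With the paper's normalization of the Fourier transform, modulation is carried to translation: $\widehat{M_{-x_0}f}=T_{-x_0}\hat f$, and the right-hand side is supported in $B(0,R)$ precisely when $\hat f$ is supported in $B(x_0,R)$. Since $\abs{M_{-x_0}f}=\abs{f}$ pointwise, every $L^p$ norm is unchanged, so one may assume $x_0=0$. Next I would eliminate the radius by dilation. Setting $D_Rf(t)=f(t/R)$ one computes $\widehat{D_Rf}(\omega)=R^d\hat f(R\omega)$, supported in $B(0,1)$, together with $\norm{D_Rf}_p=R^{d/p}\norm{f}_p$. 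Feeding these two relations into the (yet to be proved) normalized inequality $\norm{D_Rf}_q\le C\norm{D_Rf}_p$ reproduces exactly the factor $R^{d(1/p-1/q)}$, so it is enough to handle $R=1$.

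For the normalized case I would fix, once and for all, a Schwartz function $\psi\in\cS(\rd)$ whose Fourier transform $\hat\psi\in\cC_0^\infty(\rd)$ equals $1$ on $B(0,1)$ and is supported in $B(0,2)$. Because $\hat f$ is supported in $B(0,1)$, we have $\hat\psi\,\hat f=\hat f$, equivalently $f=\psi\ast f$ as tempered distributions (the convolution of a Schwartz function with an element of $\cS'(\rd)$ being well defined). Assuming $f\in L^p$ — otherwise the inequality is vacuous — Young's inequality yields $\norm{f}_q=\norm{\psi\ast f}_q\le\norm{\psi}_s\norm{f}_p$ with $1/s=1-(1/p-1/q)$; since $1\le p\le q\le\infty$ forces $1/p-1/q\in[0,1]$, the exponent $s$ lies in $[1,\infty]$ and $\norm{\psi}_s<\infty$.

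The one genuinely important point — and the only real obstacle — is that the constant must be independent of $p$ and $q$. This I would obtain from the log-convexity of the $L^s$ norms of the fixed function $\psi$: interpolating between $s=1$ and $s=\infty$ gives $\norm{\psi}_s\le\norm{\psi}_1^{1/s}\norm{\psi}_\infty^{1-1/s}\le\max\{\norm{\psi}_1,\norm{\psi}_\infty\}$, a bound independent of $s$ and hence of $p,q$. Apart from this uniformity check the proof is routine; the only other thing to watch is the bookkeeping in the dilation step, which must be carried out so that the scaling exponents combine to the advertised $d(1/p-1/q)$.
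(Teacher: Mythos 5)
Your proof is correct and complete; the paper itself states this lemma without proof, merely citing Wolff's lecture notes, and your argument (reduction to $x_0=0$, $R=1$ by modulation and dilation, then $f=\psi\ast f$ plus Young's inequality, with the $p,q$-uniformity of the constant secured by log-convexity of $s\mapsto\|\psi\|_s$) is precisely the standard proof found there. No gaps.
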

\begin{proposition}\label{new}
Let $1\leq p\leq q\leq\infty$. For
every $R>0$, there exists a constant
$C_R>0$ such that, for every
$f\in\mathcal{S}'(\R^d)$ whose Fourier
transform is supported in any ball of
radius $R$, it turns out
\[
\|f\|_{W(L^q,L^p)}\leq C_R\|f\|_{p}.
\]
\end{proposition}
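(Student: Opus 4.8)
The plan is to use the elementary but powerful fact that a tempered distribution with Fourier support in a ball is reproduced by convolution with a single Schwartz kernel, and then to let the convolution relation for Wiener amalgam spaces (Lemma \ref{WA}(i)) carry the gain in local integrability. We may assume $\|f\|_p<\infty$, since otherwise there is nothing to prove, so that in fact $f\in L^p(\rd)$.

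First I would normalize the position of the Fourier support. Suppose $\supp\hat f\subseteq B(x_0,R)$. Then $\widehat{M_{-x_0}f}=T_{-x_0}\hat f$ is supported in $B(0,R)$, and since $|M_\omega f|=|f|$ pointwise, modulation is an isometry both on $L^p(\rd)$ and on every Wiener amalgam space defined through the norm \eqref{wienerdis}. Hence it suffices to treat the case $\supp\hat f\subseteq B(0,R)$, the resulting constant depending only on $R$ (for fixed $p,q$).

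Next, fix once and for all a function $\chi\in\mathcal{S}(\rd)$ with $\hat\chi\in\cC_0^\infty(\rd)$ and $\hat\chi\equiv1$ on $B(0,R)$. Then $\widehat{\chi\ast f}=\hat\chi\,\hat f=\hat f$, so the reproducing identity $f=\chi\ast f$ holds. I would now read this identity in the scale of amalgam spaces. Writing $\|f\|_p=\|f\|_{W(L^p,L^p)}$ and choosing $r\in[1,\infty]$ by $1/r=1-1/p+1/q$, the hypothesis $p\le q$ guarantees $0\le 1/r\le 1$, so $r$ is admissible, and Young's inequality yields the two embeddings $L^r\ast L^p\hookrightarrow L^q$ (local component) and $L^1\ast L^p\hookrightarrow L^p$ (global component). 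By Lemma \ref{WA}(i) this gives $W(L^r,L^1)\ast W(L^p,L^p)\hookrightarrow W(L^q,L^p)$, whence
\[
\|f\|_{W(L^q,L^p)}\le \|\chi\|_{W(L^r,L^1)}\,\|f\|_{W(L^p,L^p)}=\|\chi\|_{W(L^r,L^1)}\,\|f\|_p.
\]
Since $\chi\in\mathcal{S}(\rd)\hookrightarrow W(L^\infty,L^1)\hookrightarrow W(L^r,L^1)$ (using Lemma \ref{WA}(ii) for the last inclusion), the constant $C_R:=\|\chi\|_{W(L^r,L^1)}$ is finite and, for fixed $p,q$, depends only on $R$, as claimed.

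The one delicate point is the reduction addressed in the first step: a naive attempt to improve local integrability cube by cube via Bernstein's inequality (Lemma \ref{bernstein}) fails, because multiplying $f$ by a cutoff supported on a single cube destroys the compactness of the Fourier support. The reproducing identity $f=\chi\ast f$ circumvents exactly this difficulty, and the amalgam convolution inequality then performs, simultaneously, the local improvement (through $L^r\ast L^p\hookrightarrow L^q$) and the control of global summability (through $L^1\ast L^p\hookrightarrow L^p$, which is where the rapid decay of $\chi$ keeps us in the same global space). The endpoints $p=q$ (giving $r=1$) and $p=q=\infty$ are checked directly, covering the full range $1\le p\le q\le\infty$.
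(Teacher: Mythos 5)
Your argument is correct, but it is genuinely different from the one in the paper, and your closing remark about the ``delicate point'' slightly mischaracterizes what actually goes wrong with the localization approach. The paper's proof \emph{does} proceed window by window: it chooses the window $g$ in the definition of the $W(L^q,L^p)$-norm to be a Schwartz function whose \emph{Fourier transform} is compactly supported (admissible, since nonzero Schwartz windows yield equivalent amalgam norms), so that each product $(T_xg)f$ has Fourier support in a ball of radius $R+1$; Bernstein's inequality (Lemma \ref{bernstein}) then gives $\|(T_xg)f\|_q\le C_R\|(T_xg)f\|_p$ uniformly in $x$, and taking the $L^p$-norm in $x$ finishes the proof. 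So the obstruction you point out is real only for windows that are compactly supported in space; it is circumvented not by abandoning localization but by localizing with a band-limited window. Your route instead globalizes first, via the reproducing identity $f=\chi\ast f$ and the amalgam convolution relation $W(L^r,L^1)\ast W(L^p,L^p)\hookrightarrow W(L^q,L^p)$ with $1/r=1+1/q-1/p$; the exponent bookkeeping and the reduction to center $0$ by modulation are all in order. What each approach buys: the paper's argument is shorter and needs only the scalar Bernstein inequality plus the window-independence of the amalgam norm, while yours is self-contained at the level of Lemma \ref{WA} (avoiding any discussion of which windows are admissible) and makes transparent that the constant degrades only through $\|\chi\|_{W(L^r,L^1)}$, i.e.\ polynomially in $R$ after rescaling. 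Both are acceptable proofs of the stated proposition.
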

\begin{proof}
Choose a Schwartz function $g$ whose
Fourier transform $\hat{g}$ has compact
support in $B(0,1)$, as window function
arising in the definition of the norm
in $W(L^q,L^p)$. Then, the function
$(T_x g) f$, $x\in\R^d$, has Fourier
transform supported in a ball of radius
$R+1$. Therefore it follows from
Bernstein's inequality (Lemma
\ref{bernstein}) that
\[
\|(T_x g) f\|_{q}\leq C_R\| (T_x g)
f\|_{p}.
\]
Taking the $L^p$-norm with respect to
$x$ gives the conclusion.
\end{proof}

\vskip0.3truecm

\textbf{Modulation spaces}. For their basic properties we refer to
\cite{F1,book}. \par
Given a non-zero window $g\in\sch(\Ren)$ and $1\leq p,q\leq
\infty$, the {\it
  modulation space} $M^{p,q}(\Ren)$ consists of all tempered
distributions $f\in\sch'(\Ren)$ such that the STFT, defined in \eqref{eqi2}, fulfills $V_gf\in L^{p,q}(\Renn )$. The norm on $M^{p,q}$ is
$$
\|f\|_{M^{p,q}}=\|V_gf\|_{L^{p,q}}=\left(\int_{\Ren}
  \left(\int_{\Ren}|V_gf(x,\o)|^p\,
    dx\right)^{q/p}d\o\right)^{1/p}  \, .
$$
If $p=q$, we write $M^p$ instead of $M^{p,p}$.

$M^{p,q}$ is a Banach space
whose definition is independent of the choice of the window $g$. Moreover,
if $g \in   M^1 \setminus \{0\}$, then
$\|V_gf \|_{L^{p,q}}$ is an
equivalent norm for $M^{p,q}(\Ren)$.

 Among the properties of \modsp s, we record that
$M^{2,2}=L^2$, $M^{p_1,q_1}\hookrightarrow M^{p_2,q_2}$, if
$p_1\leq p_2$ and $q_1\leq q_2$. If $1\leq p,q<\infty$, then
$(M^{p,q})'=M^{p',q'}$.\par

Modulation spaces and Wiener amalgam spaces are
closely related: for $p=q$, we have
\begin{equation}\label{Wienermod}\|f\|_{W({\cF
L}^p,L^p)}=\left(\int_{\rd}\int_{\rd}|V_{g}f\phas|^p\, m\phas^p
dx\,d\o\right)^{1/p}\asymp  \|f\|_{M^p}.
\end{equation}

Finally, the next results  will be
useful in the sequel.
\begin{proposition}[{\cite[Proposition 3.4]{CK07}}]\label{mpcha}
Let $g\in M^1(\R^d)$ and
$1\leq p \leq \infty$ be
given. Then \,\,$f\in
M^p(\rd)$ if and only if $
V_g f\in M^p(\rdd)$\,\, with
\begin{equation}\label{equivMp} \|V_g f\|_{M^p}\asymp \|g\|_{M^p}\|f\|_{M^p}.\end{equation}
\end{proposition}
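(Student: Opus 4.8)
The plan is to reduce the two-sided estimate \eqref{equivMp} to an exact \emph{factorization} of the STFT of $V_gf$, by computing the $M^p(\rdd)$ norm of $F:=V_gf$ with a cleverly chosen window on phase space. First I would fix two auxiliary nonzero windows $\gamma,\psi\in\sch(\rd)$ and set $\Phi:=V_\psi\gamma$. Since $\gamma,\psi$ are Schwartz, $\Phi\in\sch(\rdd)\subset M^1(\rdd)$, so $\Phi$ is an admissible window and $\|V_\Phi F\|_{L^p(\Refn)}$ is a norm equivalent to $\|F\|_{M^p(\rdd)}$ (recall $M^p=M^{p,p}$ and that the modulation norm is independent of the window, up to equivalence, as soon as the window lies in $M^1\setminus\{0\}$). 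Thus it suffices to evaluate $\|V_\Phi(V_gf)\|_{L^p(\Refn)}$ and to show it is comparable to $\|f\|_{M^p}\|g\|_{M^p}$.

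The heart of the argument — and the step I expect to be the main obstacle — is the following pointwise identity, to be obtained by a direct computation: writing a point of $\Refn$ as $(x,\o,\xi,\eta)$ with $x,\o,\xi,\eta\in\rd$,
\[
|V_\Phi(V_gf)(x,\o,\xi,\eta)|=|V_\gamma f(-\eta,\o+\xi)|\,|V_g\psi(x+\eta,-\xi)|.
\]
To prove it I would insert the definitions of $F=V_gf$ and of $\Phi=V_\psi\gamma$ into
\[
V_\Phi F(x,\o,\xi,\eta)=\intrdd F(u,v)\,\overline{\Phi(u-x,v-\o)}\,e^{-2\pi i(\xi u+\eta v)}\,du\,dv,
\]
obtaining a fourfold integral in the variables $t$ (from $V_gf$), $s$ (from $V_\psi\gamma$), $u$ and $v$. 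Integrating first in $v\in\rd$ produces a Dirac delta forcing $s=t+\eta$; the subsequent substitution $r=t-u$ decouples the $t$- and $r$-integrals, which then collapse to the two STFTs above (times unimodular phase factors that disappear upon taking absolute values). Keeping track of all the phases is the only delicate bookkeeping here; since only the modulus is needed, their exact form is immaterial.

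Once the factorization is in hand, I would apply the change of variables $(a,b,c,d)=(-\eta,\o+\xi,x+\eta,-\xi)$, a linear bijection of $\Refn$ with unit Jacobian, under which the two factors separate; hence for $1\le p<\infty$
\[
\|V_\Phi(V_gf)\|_{L^p(\Refn)}=\|V_\gamma f\|_{L^p(\rdd)}\,\|V_g\psi\|_{L^p(\rdd)},
\]
with the obvious supremum version for $p=\infty$. The first factor is $\asymp\|f\|_{M^p}$ (as $\gamma\in M^1\setminus\{0\}$). For the second, the switching relation of Lemma \ref{propstft}(iii) gives $|V_g\psi(c,d)|=|V_\psi g(-c,-d)|$, whence $\|V_g\psi\|_{L^p(\rdd)}=\|V_\psi g\|_{L^p(\rdd)}\asymp\|g\|_{M^p}$ (as $\psi\in M^1\setminus\{0\}$). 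Combining, $\|V_gf\|_{M^p}\asymp\|f\|_{M^p}\|g\|_{M^p}$, with constants depending only on the fixed windows $\gamma,\psi$. Since $g\neq0$ and $g\in M^1\hookrightarrow M^p$ forces $0<\|g\|_{M^p}<\infty$, this equivalence immediately yields both implications $f\in M^p\Leftrightarrow V_gf\in M^p(\rdd)$, completing the proof.
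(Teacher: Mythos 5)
Your argument is correct: the factorization $|V_{V_\psi\gamma}(V_gf)(x,\o,\xi,\eta)|=|V_\gamma f(-\eta,\o+\xi)|\,|V_g\psi(x+\eta,-\xi)|$ checks out under the paper's normalization of the STFT, the linear change of variables is indeed unimodular, and the rest follows as you say. The paper gives no proof of this proposition (it is quoted from \cite{CK07}), but your route is essentially the one used there --- computing the STFT of an STFT with the window $\Phi=V_\psi\gamma$ and exploiting the resulting separation of variables --- so nothing further is needed beyond the routine density argument that rigorously justifies the formal Dirac-delta step for $f\in\cS'(\rd)$.
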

\begin{lemma}[{\cite[Lemma 4.1]{CG02}}]\label{amalg}
Let  $1\leq p,q\leq\infty$. If  $f\in  M^{p,q}(\Ren)$ and   $g\in
M^{1}(\Ren)$, then  $V_gf \in  W(\Fur
L^1,L^{p,q})(\Renn)$ with norm estimate
\begin{equation}\label{amalgeq}
\| V_gf\|_{W(\Fur L^1,L^{p,q})}\lesssim \|f\|_{
  M^{p,q}}\|g\|_{M^{1}}\, .
\end{equation}
\end{lemma}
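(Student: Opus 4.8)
The plan is to compute the Wiener amalgam norm of $V_gf$ directly from the definition, choosing as analyzing window on $\Renn$ a function of the special form $\Phi=\overline{V_{\f_2}\f_1}$ with $\f_1,\f_2\in\sch(\Ren)\setminus\{0\}$. This is legitimate since $\Phi\in\sch(\Renn)\subset W(\Fur L^1,L^1)$, and for local component $\Fur L^1$ the admissible windows may be taken in the Feichtinger algebra, all choices yielding equivalent norms. The heart of the matter is the elementary observation that, for any $z\in\Renn$, the local component $\|V_gf\cdot T_z\Phi\|_{\Fur L^1}$ is the $L^1$-norm of $\Fur[V_gf\cdot T_z\Phi]$, and that the Fourier transform of such a product of short-time Fourier transforms is computable in closed form through Lemma \ref{propstft}(iv). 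I therefore intend to use that formula as an identity for $\Fur[V_gf\,\overline{V_{\f_2}\f_1}]$: it identifies this transform, up to the measure-preserving rotation $(x,\o)\mapsto(-\o,x)$ of $\Renn$, with the product $V_{\f_1}f\cdot\overline{V_{\f_2}g}$, so that $\|\,\Fur[V_gf\,\overline{V_{\f_2}\f_1}]\,\|_{L^1}=\|V_{\f_1}f\cdot\overline{V_{\f_2}g}\|_{L^1}$.

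Next I would reinsert the translation by $z=(x_0,\o_0)$. Writing $T_z\Phi=\overline{T_zV_{\f_2}\f_1}$ and invoking the covariance formula Lemma \ref{propstft}(i), the translate $V_{\f_2}\f_1(\cdot-x_0,\cdot-\o_0)$ equals, up to a unimodular factor, $V_{\f_2}(M_{\o_0}T_{x_0}\f_1)$; unimodular factors affect neither the modulus nor the $L^1$-norm of a Fourier transform. Applying the identity of the previous paragraph with $\f_1$ replaced by $M_{\o_0}T_{x_0}\f_1$, and then using once more the covariance in the form $|V_{M_{\o_0}T_{x_0}\f_1}f(y,\eta)|=|V_{\f_1}f(y+x_0,\eta+\o_0)|$, I expect the local component to collapse to a convolution:
\[
\|V_gf\cdot T_z\Phi\|_{\Fur L^1}=\int_{\Renn}|V_{\f_1}f(u+z)|\,|V_{\f_2}g(u)|\,du=\bigl(|V_{\f_1}f|\ast|V_{\f_2}g|^{\vee}\bigr)(z),
\]
where $h^{\vee}(u)=h(-u)$. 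In other words, the function $z\mapsto\|V_gf\cdot T_z\Phi\|_{\Fur L^1}$ is exactly the convolution of $|V_{\f_1}f|$ with the reflection of $|V_{\f_2}g|$.

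Finally I would take the $L^{p,q}$-norm in $z$ and apply Young's inequality for mixed-norm Lebesgue spaces, $\|H\ast K\|_{L^{p,q}}\le\|H\|_{L^{p,q}}\|K\|_{L^1}$ (two applications of Minkowski's integral inequality), with $H=|V_{\f_1}f|$ and $K=|V_{\f_2}g|^{\vee}$. Since $\|V_{\f_1}f\|_{L^{p,q}}\asymp\|f\|_{M^{p,q}}$ and $\|V_{\f_2}g\|_{L^1}\asymp\|g\|_{M^1}$ (both norms window-independent up to equivalence, with $g\in M^1$ guaranteeing $V_{\f_2}g\in L^1$), this yields precisely \eqref{amalgeq}. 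The main obstacle is the bookkeeping in the second step: one must track the rotation $(x,\o)\mapsto(-\o,x)$ together with the unimodular phases produced by Lemma \ref{propstft}(i) and (iv) carefully enough to be sure that, after taking moduli and the $L^1$-norm of the Fourier transform, the entire $z$-dependence lands on the $f$-factor (to be measured in $L^{p,q}$) while the $g$-factor is integrated out in $L^1$. Once the convolution identity is established, the remaining estimate is routine.
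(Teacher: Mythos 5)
Your proof is correct. Note that the paper does not prove this lemma at all---it imports it from \cite{CG02}---and your argument is essentially the one given in that reference: choosing the window $\Phi=\overline{V_{\f_2}\f_1}$ and invoking Lemma \ref{propstft}(iv) identifies the local $\cF L^1$-component at $z$ with the convolution $\bigl(|V_{\f_1}f|\ast|V_{\f_2}g|^{\vee}\bigr)(z)$, after which Young's inequality $L^{p,q}\ast L^1\hookrightarrow L^{p,q}$ and the window-independence of the $M^{p,q}$ and $M^1$ norms finish the proof. The one delicate point, which you handle correctly, is that the covariance relation \eqref{eql2} produces the factor $e^{-2\pi i(\o-\o_0)x_0}$, a character rather than a constant phase; since multiplication by a character only translates the Fourier transform, the $\cF L^1$-norm is indeed unaffected.
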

We finally recall a
characterization of
modulation spaces. Let $\psi
\in \cS(\R^d)$ be such that
\begin{equation}\label{2.1}
\mathrm{supp}\, \psi \subset (-1,1)^d
\qquad \text{and} \qquad \sum_{k \in
\zd}\psi(\o-k)=1 \quad \text{for all
$\o \in \R^d$}.
\end{equation}
\begin{proposition}{\rm (\cite{triebel83})}\label{caratterizzazione}
Let $1 \le p,q \le \infty$.
An equivalent norm in
$M^{p,q}$ is given by
\[
\|f\|_{M^{p,q}}
\asymp\left(\sum_{k \in \zd}
\|\psi(D-k)f\|_{L^p}^q
\right)^{1/q},
\]
where
$\psi(D-k)f:=\Fur^{-1}(\hat{f}
T_k\psi)$.
\end{proposition}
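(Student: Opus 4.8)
The plan is to reduce the claimed equivalence to a statement about \emph{sampling} the STFT in the frequency variable, and then to establish that sampling equivalence by means of the change-of-window inequality in Lemma~\ref{convstft}.

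First I would unwind the operator $\psi(D-k)$. Writing $\psi(D-k)f=f\ast \Fur^{-1}(T_k\psi)$ and computing $\Fur^{-1}(T_k\psi)(t)=e^{2\pi i kt}\check\psi(t)$, where $\check\psi:=\Fur^{-1}\psi\in\cS(\rd)$, one finds
\[
\psi(D-k)f(t)=e^{2\pi i kt}\int_{\rd} f(s)\,\check\psi(t-s)\,e^{-2\pi i ks}\,ds .
\]
Setting $g_0(u):=\overline{\check\psi(-u)}\in\cS(\rd)\setminus\{0\}$ and comparing with the definition \eqref{eqi2}, this integral is exactly $V_{g_0}f(t,k)$, so that $|\psi(D-k)f(t)|=|V_{g_0}f(t,k)|$ for every $t$ and every $k\in\zd$. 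Consequently
\[
\Big(\sum_{k\in\zd}\|\psi(D-k)f\|_{L^p}^{q}\Big)^{1/q}=\Big(\sum_{k\in\zd}\|V_{g_0}f(\cdot,k)\|_{L^p}^{q}\Big)^{1/q}.
\]
Since $g_0\in\cS\subset M^1$, the quantity $\|V_{g_0}f\|_{L^{p,q}}$ is an equivalent norm on $M^{p,q}$; hence the proposition is equivalent to the assertion that the frequency-sampled mixed norm above is comparable to the full mixed norm $\big(\int_{\rd}\|V_{g_0}f(\cdot,\o)\|_{L^p}^{q}\,d\o\big)^{1/q}$.

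To prove this sampling equivalence I would exploit the sub-mean-value structure of the STFT. Applying Lemma~\ref{convstft} with $g=\gamma=g_0$ yields the pointwise bound $|V_{g_0}f|\le C\,(|V_{g_0}f|\ast\Phi)$ on $\rdd$, where $\Phi:=|V_{g_0}g_0|$ is a fixed rapidly decreasing function, since $g_0\in\cS$. Taking the $L^p$ norm in the time variable and applying Minkowski's integral inequality together with the translation invariance of the $L^p$ norm in $x$, the function $N(\o):=\|V_{g_0}f(\cdot,\o)\|_{L^p}$ satisfies
\[
N(\o)\le C\,(N\ast\phi)(\o),\qquad \phi(\eta):=\int_{\rd}\Phi(y,\eta)\,dy\in\cS(\rd),\ \ \phi\ge 0 .
\]
Decomposing $\rd$ into the unit cubes $Q_k=k+[0,1)^d$ and using the rapid decay of $\phi$, this inequality controls both the supremum of $N$ over each cube and the values $N(k)$ by weighted sums, with summable weights, of the local integrals $\int_{Q_j}N$; Young's inequality on $\zd$ then converts these into $\ell^q$ estimates, giving $\|N\|_{L^q(\rd)}\asymp \big(\sum_k N(k)^q\big)^{1/q}$, which is precisely the required equivalence.

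The main obstacle is the second half of this last step, namely bounding the continuous norm $\|N\|_{L^q}$ by the sampled one $\big(\sum_k N(k)^q\big)^{1/q}$. The point is that the sub-mean-value inequality $N\le C\,N\ast\phi$ prevents $N$ from concentrating between lattice points: a sharp spike at a non-integer frequency would, through the convolution with the spread-out kernel $\phi$, force $N$ to be comparably large on a neighborhood of size $\sim 1$, and hence near some lattice point. Turning this heuristic into the estimate $\sup_{Q_k}N\lesssim \sum_j v(k-j)\,N(j)$ with a rapidly decaying sequence $v$ is the delicate part; it is exactly the semi-discrete analogue of the Gabor-sampling (coorbit) argument, and once it is in place both inequalities follow by Young's inequality as above.
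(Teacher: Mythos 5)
The paper offers no proof of this proposition --- it is quoted from Triebel \cite{triebel83} --- so the only question is whether your argument stands on its own. Your first reduction is correct and clean: $\psi(D-k)f(t)=e^{2\pi ikt}V_{g_0}f(t,k)$ with $g_0(u)=\overline{\check\psi(-u)}$, and the direction $\bigl(\sum_k N(k)^q\bigr)^{1/q}\lesssim\|N\|_{L^q}$ does follow from $N\le C\,N\ast\phi$ together with H\"older on unit cubes and Young's inequality on $\zd$, exactly as you sketch.

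The converse direction, however, has a genuine gap, and it cannot be closed with the ingredients you have assembled. The sub-mean-value inequality $N\le C\,N\ast\phi$ holds for \emph{every} Schwartz window $g_0$, but the sampling inequality $\|N\|_{L^q}\lesssim\bigl(\sum_k N(k)^q\bigr)^{1/q}$ is \emph{false} for a general Schwartz window: take $d=1$ and $g_0$ with $\widehat{g_0}$ supported in $(-1/4,1/4)$, and $f$ with $\hat f$ supported in $(3/8,5/8)$; then $V_{g_0}f(x,\omega)=e^{-2\pi ix\omega}V_{\widehat{g_0}}\hat f(\omega,-x)$ vanishes identically at every integer frequency $\omega=k$, so $N(k)=0$ for all $k\in\bZ$ while $N\not\equiv0$ --- and this $N$ still satisfies $N\le C\,N\ast\phi$, being an honest STFT. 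So your heuristic that the convolution inequality ``prevents $N$ from concentrating between lattice points'' is not correct: it bounds $N$ by its local averages but does not bound it by its lattice samples. What makes the proposition true is the covering hypothesis \eqref{2.1}, $\sum_{k\in\zd}\psi(\o-k)=1$ with $\supp\psi\subset(-1,1)^d$, which never enters your argument. The standard way to use it is to write $\hat f=\sum_k\hat f\,T_k\psi$, note that for a window $g$ with compactly supported $\hat g$ only the finitely many $k$ with $|k-\o|\lesssim 1$ contribute to $V_g f(\cdot,\o)$, and deduce
\[
\|V_g f(\cdot,\o)\|_{L^p}\lesssim\sum_{|k-\o|\lesssim 1}\|\psi(D-k)f\|_{L^p},
\]
from which $\|V_gf\|_{L^{p,q}}\lesssim\bigl(\sum_k\|\psi(D-k)f\|_{L^p}^q\bigr)^{1/q}$ follows by integrating the $q$-th power in $\o$ over unit cubes. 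With that substitution for your final step the proof is complete; without it, the argument does not go through.
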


\section{Boundedness of the STFT}
To prove  sharp results for the STFT, we need the
following lemmata.
\begin{lemma}\label{l1} Let
$h\in\cC^\infty_0(\R^d)$, and
consider the family of
functions
\[
h_\lambda(x)=h(x) e^{-\pi
i\lambda|x|^2}, \qquad
\lambda\geq1.
\]
Then, if $q\geq2$,
\begin{equation}\label{ub}
\|\widehat{h_\lambda}\|_{q}\lesssim
\lambda^{\frac{d}{q}-\frac{d}{2}},
\end{equation}
whereas, if $1\leq q<2$,
\begin{equation}\label{lb}
\|\widehat{h_\lambda}\|_{q}\gtrsim
\lambda^{\frac{d}{q}-\frac{d}{2}}.
\end{equation}
\end{lemma}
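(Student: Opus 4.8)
The plan is to reduce the whole statement to the two endpoint estimates
\begin{equation*}
\|\widehat{h_\lambda}\|_{2}\asymp 1,\qquad \|\widehat{h_\lambda}\|_{\infty}\lesssim \lambda^{-d/2},
\end{equation*}
valid for every $\lambda\ge1$, and then to extract both \eqref{ub} and \eqref{lb} from them by an elementary splitting of the integral $\int|\widehat{h_\lambda}|^q$. The $L^2$ bound is immediate: since $|h_\lambda(x)|=|h(x)|$ pointwise (the chirp has modulus one), Plancherel gives $\|\widehat{h_\lambda}\|_2=\|h_\lambda\|_2=\|h\|_2$, a constant in $\lambda$, which already matches \eqref{ub}--\eqref{lb} at $q=2$, where the exponent vanishes.

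The core of the argument, and the step I expect to be the only genuine obstacle, is the $L^\infty$ decay. Here I would write $h_\lambda=h\cdot e^{-\pi i\lambda|\cdot|^2}$ and use that the Fourier transform turns this product into a convolution, $\widehat{h_\lambda}=\hat h\ast \widehat{e^{-\pi i\lambda|\cdot|^2}}$. The Fourier transform of the chirp is obtained from the Gaussian formula $\widehat{\f_{a+ib}}(\xi)=(a+ib)^{-d/2}e^{-\pi|\xi|^2/(a+ib)}$ by letting $a\to0^+$ with $b=\lambda$, giving $\widehat{e^{-\pi i\lambda|\cdot|^2}}(\xi)=(i\lambda)^{-d/2}e^{\pi i|\xi|^2/\lambda}$, a function of constant modulus $\lambda^{-d/2}$. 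Since $h\in\cC^\infty_0(\rd)$ forces $\hat h\in\cS(\rd)\subset L^1$, Young's inequality then yields
\begin{equation*}
\|\widehat{h_\lambda}\|_\infty\le\|\hat h\|_1\,\|\widehat{e^{-\pi i\lambda|\cdot|^2}}\|_\infty=\|\hat h\|_1\,\lambda^{-d/2}.
\end{equation*}
Everything else is soft; the only real content is this explicit (limiting) transform of the chirp together with its constant modulus.

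Finally I would interpolate the two endpoints by hand, writing $f=\widehat{h_\lambda}$ (note $h_\lambda\in\cC^\infty_0$, so $f\in\cS(\rd)$ and every norm below is finite). For $q\ge2$ the splitting $|f|^q=|f|^2|f|^{q-2}$ gives $\|f\|_q^q\le\|f\|_\infty^{q-2}\|f\|_2^2\lesssim\lambda^{-d(q-2)/2}$, that is $\|f\|_q\lesssim\lambda^{d/q-d/2}$, which is \eqref{ub}. For $1\le q<2$ the splitting $|f|^2=|f|^q|f|^{2-q}$ gives $\|f\|_2^2\le\|f\|_\infty^{2-q}\|f\|_q^q$, hence $\|f\|_q^q\ge\|f\|_2^2\,\|f\|_\infty^{-(2-q)}\gtrsim\lambda^{d(2-q)/2}$, that is $\|f\|_q\gtrsim\lambda^{d/q-d/2}$, which is \eqref{lb}. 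In both cases the exponent simplifies via $d(2-q)/(2q)=d(1/q-1/2)=d/q-d/2$. I would emphasize that the lower bound uses only $h\not\equiv0$ (through $\|h\|_2>0$), not the stronger hypothesis $h(0)\neq0$ that a direct stationary-phase analysis would seem to require.
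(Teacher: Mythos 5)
Your proof is correct, and it is genuinely more self-contained than the one in the paper. The paper disposes of the upper bound \eqref{ub} by citing it from Wolff's lecture notes, and then derives the lower bound \eqref{lb} from \eqref{ub} applied at the conjugate exponent, via H\"older in the form $\|h\|_2^2=\|\widehat{h_\lambda}\|_2^2\leq\|\widehat{h_\lambda}\|_{q'}\|\widehat{h_\lambda}\|_{q}$. You instead prove the upper bound from scratch: the exact Fresnel formula $\widehat{e^{-\pi i\lambda|\cdot|^2}}(\xi)=(i\lambda)^{-d/2}e^{\pi i|\xi|^2/\lambda}$ (obtained as the $a\to0^+$ limit of the Gaussian formula, which is legitimate in $\cS'$), Young's inequality for $\hat h\ast\widehat{e^{-\pi i\lambda|\cdot|^2}}$ with $\hat h\in L^1$, and Plancherel give the two endpoints $\|\widehat{h_\lambda}\|_\infty\lesssim\lambda^{-d/2}$ and $\|\widehat{h_\lambda}\|_2\asymp1$; the elementary splittings $|f|^q\leq\|f\|_\infty^{q-2}|f|^2$ and $|f|^2\leq\|f\|_\infty^{2-q}|f|^q$ then yield \eqref{ub} and \eqref{lb} simultaneously. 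Your lower-bound step is the same idea as the paper's (conservation of the $L^2$ mass forces $L^q$ growth for $q<2$ once a higher norm decays), just run against the $L^\infty$ endpoint rather than against $L^{q'}$; what your version buys is that the whole lemma rests only on the explicit chirp transform, with no external reference and no stationary phase. Your closing remark is also accurate: the lower bound needs only $h\not\equiv0$ (so $\|h\|_2>0$), which is the same implicit hypothesis the paper uses when it writes $0\neq C=\|h_\lambda\|_2^2$.
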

\begin{proof}
The upper bound \eqref{ub} is
shown, e.g., in \cite[page
28]{wolff}. The lower bound
\eqref{lb} follows from
\eqref{ub} since, for $q<2$,
\[
0\not=C=\|h_\lambda\|^2_{2}=\|\widehat{h_\lambda}\|^2_{2}\leq
\|\widehat{h_\lambda}\|_{{q'}}
\|\widehat{h_\lambda}\|_{q}\lesssim\lambda^{\frac{d}{q'}-\frac{d}{2}}
\|\widehat{h_\lambda}\|_{q}.
\]
\end{proof}

\begin{lemma}\label{ll1}
Let $\f(t)=e^{-\pi|t|^2}$ and consider the family of
functions $\f_\lambda(t)=e^{-\pi\lambda|t|^2}$, $\lambda>0$. We
have
\[
V_\f \f_\lambda(x,\omega)=(\lambda+1)^{-\frac{d}{2}}
e^{-\frac{\pi(\lambda |x|^2+|\omega|^2)}{\lambda+1}}e^{-\frac{2\pi
i}{\lambda+1}x\omega}.
\]
\end{lemma}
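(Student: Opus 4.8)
The plan is to read $V_\f\f_\lambda(x,\o)$ as an honest Fourier transform. Since the Gaussian window $\f$ is real, the conjugate in \eqref{eqi2} is harmless and
\[
V_\f\f_\lambda(x,\o)=\intrd e^{-\pi\lambda|t|^2}\,e^{-\pi|t-x|^2}\,e^{-2\pi i\o t}\,dt=\Fur\big(G_x\big)(\o),\qquad G_x(t):=e^{-\pi\lambda|t|^2-\pi|t-x|^2}.
\]
Thus everything reduces to computing the Fourier transform of the real Gaussian $G_x$, for which I only need the standard formula $\Fur\big(e^{-\pi a|\cdot|^2}\big)(\o)=a^{-d/2}e^{-\pi|\o|^2/a}$ together with the translation rule $\Fur\big(f(\cdot-x_0)\big)(\o)=e^{-2\pi i\o x_0}\Fur f(\o)$.

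First I would complete the square in the exponent of $G_x$. Writing $a=\lambda+1$ and expanding $|t-x|^2=|t|^2-2tx+|x|^2$, the exponent becomes $-\pi a|t|^2+2\pi tx-\pi|x|^2$, which I rearrange as
\[
-\pi a\Big|t-\tfrac{x}{a}\Big|^2+\tfrac{\pi}{a}|x|^2-\pi|x|^2=-\pi a\Big|t-\tfrac{x}{a}\Big|^2-\tfrac{\pi\lambda}{a}|x|^2,
\]
using $a-1=\lambda$. Hence $G_x(t)=e^{-\pi\lambda|x|^2/a}\,e^{-\pi a|t-x/a|^2}$ is a constant multiple of a Gaussian centred at $x/a$.

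Finally I would transform: the translation rule contributes the phase $e^{-2\pi i\o x/a}$, while $\Fur\big(e^{-\pi a|\cdot|^2}\big)(\o)=a^{-d/2}e^{-\pi|\o|^2/a}$ supplies the normalizing factor $(\lambda+1)^{-d/2}$ and the Gaussian decay in $\o$. Collecting the two real Gaussians via $\tfrac{\pi\lambda}{a}|x|^2+\tfrac{\pi}{a}|\o|^2=\tfrac{\pi(\lambda|x|^2+|\o|^2)}{\lambda+1}$ then yields exactly the claimed formula. The computation is entirely routine Gaussian algebra; the only point requiring care is the bookkeeping of the Fourier normalization and of the coefficient $a=\lambda+1$ throughout, so that the factor $(\lambda+1)^{-d/2}$ and the phase $e^{-2\pi i x\o/(\lambda+1)}$ come out with the correct exponents. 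An alternative, equally short route completes the square directly in the full complex exponent $-\pi a|t|^2+2\pi t(x-i\o)$ and evaluates the resulting Gaussian integral by a contour shift; I would nonetheless prefer the real-Gaussian version above, which avoids any analytic-continuation justification.
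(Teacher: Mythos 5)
Your proposal is correct and follows essentially the same route as the paper: complete the square to write $e^{-\pi\lambda|t|^2}e^{-\pi|t-x|^2}$ as $e^{-\pi\lambda|x|^2/(\lambda+1)}$ times a Gaussian centred at $x/(\lambda+1)$, then apply the translation rule and the standard Gaussian Fourier transform to obtain the factor $(\lambda+1)^{-d/2}$, the Gaussian decay in $\omega$, and the phase $e^{-2\pi i x\omega/(\lambda+1)}$. All the bookkeeping checks out.
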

\begin{proof}
We have
\begin{align*}
V_\f \f_\lambda(x,\omega)&=\int e^{-\pi\lambda|t|^2}
e^{-\pi|t-x|^2} e^{-2\pi i
\omega t}\,dt=\int e^{-\pi(\lambda+1)\left|t-\frac{x}{\lambda+1}\right|^2}e^{-\frac{\pi\lambda|x|^2}{\lambda+1}}e^{-2\pi
i \omega t}\,dt\\
&=e^{-\frac{\pi\lambda|x|^2}{\lambda+1}}\Fur\left(
T_{\frac{x}{\lambda+1}} e^{-\pi(\lambda+1)|\cdot|^2}\right)(\omega)=(\lambda+1)^{-\frac{d}{2}}e^{-\frac{\pi\lambda|x|^2}{\lambda+1}}M_{-\frac{x}{\lambda+1}} e^{-\frac{\pi|\omega|^2}{\lambda+1}}\\
&=(\lambda+1)^{-\frac{d}{2}} e^{-\frac{\pi(\lambda
|x|^2+|\omega|^2)}{\lambda+1}}e^{-\frac{2\pi
i}{\lambda+1}x\omega},
\end{align*}
as desired.
\end{proof}

Then, the result of Proposition \ref{lieb} is sharp, as
explained below.
\begin{proposition}\label{liebs}
Suppose that, for some $1\leq p,r\leq\infty$, the following
inequality holds
\begin{equation}\label{liebests}
\|V_gf\|_{p} \leq C\|g\|_{r'}
\,\|f\|_r,\quad \forall
f,g\in \cS(\R^d).
\end{equation}  Then  $p'\leq\min\{r,r'\}$
(in particular, $p\geq 2$).
\end{proposition}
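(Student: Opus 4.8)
The plan is to show that \eqref{liebests} fails whenever the exponents violate the stated relations, by testing it against an explicit one-parameter family of Schwartz functions whose STFT is known in closed form. Concretely I would keep the window fixed, $g=\f$ with $\f(t)=e^{-\pi|t|^2}$, and take $f=\f_\lambda$, $\f_\lambda(t)=e^{-\pi\lambda|t|^2}$, $\lambda>0$, so that Lemma \ref{ll1} supplies the exact value of $V_\f\f_\lambda$. A preliminary observation guides the choice of family: \eqref{liebests} is invariant under simultaneous dilation $D_\delta h(t)=h(\delta t)$, since a change of variables gives $\|V_{D_\delta g}\,D_\delta f\|_{p}=\delta^{-d}\|V_g f\|_{p}$ while $\|D_\delta f\|_{r}\|D_\delta g\|_{r'}=\delta^{-d}\|f\|_{r}\|g\|_{r'}$. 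Hence no single scale can yield information, and the two necessary conditions must be extracted from the two asymptotic regimes $\lambda\to+\infty$ and $\lambda\to0^+$ of the family $\f_\lambda$.

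The computation is then a routine Gaussian integration. Taking moduli in Lemma \ref{ll1}, $|V_\f\f_\lambda(x,\o)|=(\lambda+1)^{-d/2}e^{-\pi(\lambda|x|^2+|\o|^2)/(\lambda+1)}$, and integrating over $\R^{2d}$ I get
\[
\|V_\f\f_\lambda\|_{p}\asymp(\lambda+1)^{-\frac d2+\frac dp}\,\lambda^{-\frac{d}{2p}},
\]
while $\|\f_\lambda\|_{r}\asymp\lambda^{-\frac{d}{2r}}$ and $\|\f\|_{r'}$ is a fixed constant. Inserting these into \eqref{liebests}: as $\lambda\to+\infty$ one has $(\lambda+1)\asymp\lambda$, so the left-hand side is $\asymp\lambda^{-\frac d2+\frac{d}{2p}}$, and \eqref{liebests} forces $-\tfrac12+\tfrac1{2p}\le-\tfrac1{2r}$, that is $\tfrac1p+\tfrac1r\le1$, which is exactly $p'\le r$. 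As $\lambda\to0^+$ the factor $(\lambda+1)^{-d/2+d/p}$ tends to a constant, the left-hand side is $\asymp\lambda^{-\frac{d}{2p}}$, and \eqref{liebests} now forces $\tfrac1{2p}\le\tfrac1{2r}$, i.e. $r\le p$, which is exactly $p'\le r'$.

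Combining the two inequalities yields $p'\le\min\{r,r'\}$, and since one of $r,r'$ is always at most $2$ this automatically gives $p'\le2$, i.e. $p\ge2$. The only real subtlety I anticipate is that each limit delivers just one of the two inequalities — the regime $\lambda\to+\infty$ is binding when $r\le2$ and $\lambda\to0^+$ when $r\ge2$ — so both must genuinely be used; accordingly one should check that the blow-up of $\|V_\f\f_\lambda\|_{p}$ as $\lambda\to0^+$ is real and not an artifact, which it is, since the limiting profile $e^{-\pi|\o|^2}$ is constant in $x$ and therefore fails to lie in $L^p(\R^{2d})$. The chirped bumps of Lemma \ref{l1} could be used as an alternative to isolate $p\ge2$ through the slice identity $V_g f(0,\cdot)=\widehat{f\overline g}$, but the Gaussian family handles both conditions at once and is the cleaner tool here.
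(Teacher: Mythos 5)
Your proof is correct and follows the paper's strategy almost exactly: the same Gaussian family $\f_\lambda$ against the fixed Gaussian window, the explicit formula of Lemma \ref{ll1}, and the limit $\lambda\to+\infty$ to extract $p'\le r$. The only divergence is in the second constraint: the paper obtains $p'\le r'$ by swapping the roles of $f$ and $g$ via the switching identity \eqref{switching}, whereas you extract it from the opposite regime $\lambda\to0^+$ of the same family; the two are equivalent (your dilation-invariance remark explains why), and both arguments then deduce $p\ge2$ from $\min\{r,r'\}\le2$, the chirps of Lemma \ref{l1} being only an optional alternative route to that last conclusion, as the paper itself notes.
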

\begin{proof}
 We start by proving  the constraint $p'\leq r$.
 Let $\phi(t)=e^{-\pi|t|^2}$ and $\f_\lambda(t)=
e^{-\pi\lambda |t|^2}$. Then,
by Lemma \ref{ll1}, we have
\[
|V_\f
\f_\lambda(x,\omega)|=(\lambda+1)^{-\frac{d}{2}}
e^{-\frac{\pi(\lambda
|x|^2+|\omega|^2)}{\lambda+1}}.
\]
Hence,
\begin{equation}\label{f1l}
\|V_\f
\f_\lambda\|_{p}\asymp
(\lambda+1)^{-\frac{d}{2}}\left(\frac{\lambda}{\lambda+1}\right)^{-\frac{d}{2p}}\left(\frac{1}{\lambda+1}\right)^{-\frac{d}{2p}}.
\end{equation}
Since $\|\f_\lambda\|_r\asymp \lambda^{-d/(2r)}$, writing
\eqref{liebests} for $f=\f_\lambda$, $g=\phi$ and letting
$\lambda\to+\infty$, we get $p'\leq r$, as desired.\par The
constraint $p'\leq r'$ follows similarly by switching the role of
$f$ and $g$, i.e., by using \eqref{switching}.\par
 Of
course, $p'\leq\min\{r,r'\}$
implies $p\geq 2$. Here is an
alternative direct proof.
Assume, by contradiction,
that $p\leq2$, and test the
estimate \eqref{liebests} on
the functions $f=h_\lambda$,
$\lambda\geq 1$, of Lemma
\ref{l1}. It is well-known
(see, e.g.,
\cite{fe89-1,feichtinger90,grobner,kasso07})
that,  for distributions
supported in a fixed compact
subset, the norm in $M^p$ is
equivalent to the norm in
$\Fur L^p$. Hence, using
\eqref{lb},
\[
\|V_\f h_\lambda\|_p\asymp \|h_\lambda\|_{M^p}\asymp \|{\widehat{h_\lambda}}\|_{p}\gtrsim \lambda^{\frac dp-\frac d2}.\]
Since $\|h_\lambda\|_r=\|h\|_r=C_h$, letting $\lambda\to+\infty$ in \eqref{liebests}, the claim $p\geq 2$ follows.
\end{proof}
\vskip0.3truecm We now focus on the
boundedness of the STFT on the Wiener
amalgam spaces $W(L^p,L^q)$. We will
show that, if the windows are in $M^1$,
the optimal range of admissible pairs
$(1/q,1/p)$ is the shadowed region of
Figure 1.
  \vspace{1.2cm}
  \small
 \begin{center}
           \includegraphics{figWickbis.1}
            \\
           $ $
\end{center}
\normalsize
           \begin{center}{Figure 1:
            Estimate \eqref{bstft} holds for all pairs $(1/q,1/p)$ in the shadowed region.}
           \end{center}
\subsection{Sufficient boundedness
conditions for the STFT on $W(L^p,L^q)$.}
\begin{proposition}\label{bstftprop}
Let $\f\in M^1(\R^d)$, $f\in W(L^p,L^q)(\rd)$,
$1\leq q'\leq p\leq\infty$,
$q\geq2$. Then $V_\f f \in W(L^p,L^q)(\R^{2d})$, with the
uniform estimate
\begin{equation}\label{bstft}
\|V_\f
f\|_{W(L^p,L^q)}\lesssim
\|f\|_{W(L^p,L^q)}\|\f\|_{M^1}.
\end{equation}
\end{proposition}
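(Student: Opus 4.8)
The plan is to prove \eqref{bstft} at the extreme values of $(1/q,1/p)$ — the vertices and the main diagonal of the shadowed quadrilateral of Figure 1 — and then to fill the whole admissible region by complex interpolation of Wiener amalgam spaces (Lemma \ref{WA}(iii)), exploiting that $f\mapsto V_\varphi f$ is linear for fixed $\varphi$. The elementary pieces come first. At $p=q=\infty$, where $W(L^\infty,L^\infty)=L^\infty$, one has $|V_\varphi f(x,\omega)|=|\langle f,M_\omega T_x\varphi\rangle|\le\|f\|_\infty\|\varphi\|_1\lesssim\|f\|_\infty\|\varphi\|_{M^1}$ since $M^1\hookrightarrow L^1$. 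At $p=q=2$, where $W(L^2,L^2)=L^2$, the orthogonality relation \eqref{eql4} gives $\|V_\varphi f\|_2=\|f\|_2\|\varphi\|_2\lesssim\|f\|_2\|\varphi\|_{M^1}$ because $M^1\hookrightarrow M^2=L^2$. More generally, on the whole diagonal $p=q\in[2,\infty]$, where $W(L^p,L^p)=L^p$, Lieb's inequality (Proposition \ref{lieb}) with $r=r'=p$ yields $\|V_\varphi f\|_p\lesssim\|\varphi\|_{p'}\|f\|_p\lesssim\|f\|_p\|\varphi\|_{M^1}$. Thus the diagonal joining $(1/q,1/p)=(0,0)$ to $(1/2,1/2)$ is settled outright, splitting the quadrilateral into two triangles to be reached by interpolation.

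To handle the off–diagonal exponents I would first strip off the local component. By Lemma \ref{convstft}, fixing $\gamma\in\cS(\rd)$ and an auxiliary window $g$ with $\langle\gamma,g\rangle\neq0$,
\[
|V_\varphi f(x,\omega)|\le\frac{1}{|\langle\gamma,g\rangle|}\bigl(|V_g f|\ast|V_\varphi\gamma|\bigr)(x,\omega).
\]
Here the kernel is harmless: by Lemma \ref{amalg}, $V_\varphi\gamma\in W(\Fur L^1,L^1)(\rdd)$ with norm $\lesssim\|\varphi\|_{M^1}$, and since $\Fur L^1$ embeds locally into every $L^s$ this gives $V_\varphi\gamma\in W(L^s,L^1)$ for all $s$. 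Convolution against such a kernel (Lemma \ref{WA}(i)) maps $W(L^{s'},L^q)$ into $W(L^\infty,L^q)$, because $L^{s'}\ast L^s\hookrightarrow L^\infty$ and $L^q\ast L^1\hookrightarrow L^q$; and $W(L^\infty,L^q)\hookrightarrow W(L^p,L^q)$ for every $p$ by \eqref{lp}. Consequently the local exponent plays no essential role, and the whole proposition reduces to controlling the \emph{global} $L^q$–component of $V_g f$, i.e.\ to an estimate of the form $\|V_g f\|_{W(L^{s'},L^q)(\rdd)}\lesssim\|f\|_{W(L^p,L^q)}$ for one convenient $s$ and window $g$.

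The global estimate is the heart of the matter. Writing $V_g f(x,\cdot)=\Fur\!\bigl(f\,\overline{T_xg}\bigr)$, the decay of $V_g f$ in the frequency variable $\omega$ is governed by the local regularity of $f$ and is captured, precisely because $q\ge2$, by Hausdorff–Young for amalgams (Lemma \ref{WA}(v)); the decay in the time variable $x$ is governed by the global ($L^q$) size of $f$ together with the localization of $g$. These two marginal behaviours must then be assembled into a single $L^q(\rdd)$ bound, and it is exactly in balancing the $x$– and $\omega$–integrations through Young's and Hausdorff–Young's inequalities that the standing constraints $q\ge2$ and $q'\le p$ are forced. The endpoint $q=2$ is supplied by \eqref{eql4} (equivalently by Proposition \ref{lieb}), while the symmetry \eqref{eql2op}, under which the symmetric amalgam norm on $\rdd$ is invariant, lets me interchange the roles of $x$ and $\omega$ and so treat the two triangles on an equal footing.

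I expect the main obstacle to be exactly this global $L^q(\rdd)$ estimate: the target norm couples the time– and frequency–marginals of $V_g f$, which originate from two different norms of $f$ (its global $L^q$ and its local $L^p$ behaviour), and forcing them to combine with the correct exponents — thereby exhibiting $q\ge2$, $q'\le p$ as the precise admissible range — is the delicate point, and the one from which the sharpness claimed later will be read off. By contrast the reduction of the local component (Lemma \ref{convstft} together with the amalgam convolution and inclusion rules of Lemma \ref{WA}) is routine. The only residual care concerns interpolation: since the global component $L^\infty$ has no absolutely continuous norm, each application of Lemma \ref{WA}(iii) must be arranged with an endpoint having $q<\infty$, which merely dictates the order in which the vertices are glued.
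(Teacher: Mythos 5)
Your reduction of the local component is sound and your diagonal base cases ($p=q\in[2,\infty]$, via Lieb's inequality and the orthogonality relations) are correct, but the proof stops exactly where the content of the proposition lies. The admissible region is the quadrilateral $1/q\le 1/2$, $1/p\le 1/q'$, and interpolating from the diagonal $p=q$ alone cannot reach the points above it (e.g.\ $(p,q)=(q',q)$ with $q>2$); you still need the two boundary lines $p=q'$ and $p=\infty$. For these you only describe what must happen --- ``balancing the $x$-- and $\omega$--integrations through Young's and Hausdorff--Young's inequalities'' --- and you explicitly defer it as ``the heart of the matter''. That deferred global $L^q(\R^{2d})$ estimate is precisely the non-routine step in which the hypotheses $q\ge2$ and $p\ge q'$ are actually used, so as written the argument is incomplete at its decisive point.

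For comparison, the paper carries out exactly this step. On the edge $p=q'$ it uses the embedding $L^q\hookrightarrow W(L^{q'},L^q)$ (valid since $q\ge2$), then Lemma \ref{convstft} together with Young's inequality to obtain $\|V_\f f\|_{q}\lesssim\|\f\|_{M^1}\|f\|_{M^q}$, and finally the chain $W(L^{q'},L^q)\hookrightarrow W(\Fur L^q,L^q)=M^q$ coming from the local Hausdorff--Young embedding $L^{q'}\hookrightarrow\Fur L^q$; this is where both constraints enter. On the edge $p=\infty$ it invokes Lemma \ref{amalg} to get $\|V_\f f\|_{W(L^\infty,L^q)}\lesssim\|V_\f f\|_{W(\Fur L^1,L^q)}\lesssim\|f\|_{M^q}\|\f\|_{M^1}$ and then runs the same chain of embeddings. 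If you wish to keep your own architecture, you must actually prove the estimate $\|V_g f\|_{W(L^{s'},L^q)}\lesssim\|f\|_{W(L^p,L^q)}$ to which you reduced the problem; at present it is asserted, not established, and it does not follow from the pieces you have assembled.
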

\begin{proof}
By interpolation it suffices
to prove the desired result
when $p=q'$, $2\leq
q\leq\infty$, and when
$p=\infty$, $2\leq
q\leq\infty$.\par Let $p=q'$,
$2\leq q\leq\infty$. We have
\[
\|V_\f
f\|_{W(L^{q'},L^q)}\lesssim
\|V_\f f\|_{q},
\]
because of the embedding
$L^q\hookrightarrow
W(L^{q'},L^q)$ (recall,
$q\geq2$). By Lemma \ref{convstft}, Young's Inequality and the equality $M^q= W(\Fur
L^q,L^q)$, we obtain
\[
\|V_\f f\|_{q} \lesssim\|\f\|_{M^1}\|f\|_{W(\Fur
L^q,L^q)}\lesssim
\|\f\|_{M^1}\|f\|_{W(L^{q'},L^q)},\]
where the last inequality
is due to the embedding
$W(L^{q'},L^q)\hookrightarrow
W(\Fur L^q,L^q)$ (that is a
consequence of the
Hausdorff-Young
Inequality).\par Let now
$p=\infty$, $2\leq
q\leq\infty$. We have
\[
\|V_\f
f\|_{W(L^\infty,L^q)}\lesssim
\|V_\f f\|_{W(\Fur
L^1,L^q)}\lesssim \|f\|_{M^q}
\|\f\|_{M^1},
\]
where the second inequality
is due to \eqref{amalgeq}. The last
expression is
\[
\|f\|_{W(\Fur
L^q,L^q)}\|\f\|_{M^1}\lesssim
\|f\|_{W(L^{q'},L^q)}\|\f\|_{M^1}\lesssim
\|f\|_{W(L^\infty,L^q)}\|\f\|_{M^1}.
\]
This concludes the proof.
\end{proof}

\subsection{Necessary boundedness
conditions for the STFT on $W(L^p,L^q)$.}
In what follows, we shall prove the sharpness of the results obtained above.
\begin{proposition}\label{optimalstft}
Let $g\in M^1\setminus\{0\}$.
Suppose that, for some $1\leq
p,q\leq\infty$, $C>0$, the
estimate
\begin{equation}\label{m11}
\|V_g f\|_{W(L^p,L^q)}\leq C
\|f\|_{W(L^p,L^q)},\qquad
\forall f\in\cS(\R^d),
\end{equation}
holds. Then $p\geq q'$ and
$q\geq2$.
\end{proposition}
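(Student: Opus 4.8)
The plan is to prove the two necessary conditions $p \geq q'$ and $q \geq 2$ separately, in each case by exhibiting an explicit family of test functions for which the hypothesized estimate \eqref{m11} forces the desired inequality in the limit. The natural candidates are the dilated Gaussians from Lemma \ref{ll1} and the chirped bumps $h_\lambda$ from Lemma \ref{l1}, since for these families both the Wiener amalgam norms and the STFT can be computed or estimated explicitly via \eqref{dd1} and \eqref{ub}--\eqref{lb}. Since the space $W(L^p,L^q)$ and the estimate are insensitive to replacing $g$ by a fixed convenient window (the two STFTs differ by convolution with $|V_{g_0}\gamma|$ as in Lemma \ref{convstft}, and all admissible windows in $M^1$ give comparable estimates), I would feel free to take $g$ to be the standard Gaussian $\f(t)=e^{-\pi|t|^2}$ throughout.

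First, to obtain $q \geq 2$, I would test \eqref{m11} on $f = h_\lambda$, $\lambda \geq 1$, where $h_\lambda(x) = h(x)e^{-\pi i \lambda |x|^2}$ with $h \in \cC_0^\infty$. On the right-hand side, since $|h_\lambda| = |h|$ has fixed compact support, $\|h_\lambda\|_{W(L^p,L^q)}$ is a constant independent of $\lambda$ (the global component sees only a fixed compact set). On the left-hand side I would argue that, for functions with fixed compact support, the STFT $V_\f h_\lambda$ is concentrated near the support in the $x$-variable, so the Wiener amalgam norm reduces, up to constants, to a genuine $L^q$-type norm in the frequency variable involving $\widehat{h_\lambda}$; invoking the known equivalence of $M^q$ and $\Fur L^q$ on a fixed compact set (as used in the proof of Proposition \ref{liebs}) together with the lower bound \eqref{lb} from Lemma \ref{l1}, which gives $\|\widehat{h_\lambda}\|_q \gtrsim \lambda^{d/q - d/2}$ when $q < 2$, I would conclude that the left side blows up as $\lambda \to +\infty$ unless $q \geq 2$. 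This forces $q \geq 2$.

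Second, to obtain $p \geq q'$, I would test \eqref{m11} on the family of pure dilated Gaussians $\f_\lambda(t) = e^{-\pi \lambda |t|^2}$, using the exact formula from Lemma \ref{ll1} for $V_\f \f_\lambda$ and the asymptotics \eqref{dd1} for the Wiener amalgam norms of Gaussians. Taking $\lambda \to 0^+$ (or $\lambda \to +\infty$, whichever exposes the constraint) I would compute the power of $\lambda$ governing $\|V_\f \f_\lambda\|_{W(L^p,L^q)}$ on the left and $\|\f_\lambda\|_{W(L^p,L^q)}$ on the right; the requirement that the ratio stay bounded in the limit yields an inequality between the exponents that simplifies exactly to $p \geq q'$. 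Here the key is that $V_\f \f_\lambda$ is itself essentially a (chirped, scaled) Gaussian in $(x,\omega)$, so \eqref{dd1} applies in each variable after separating the behavior in $x$ and in $\omega$.

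The main obstacle I anticipate is the first step, namely justifying that for compactly supported $h_\lambda$ the full Wiener amalgam norm $\|V_\f h_\lambda\|_{W(L^p,L^q)}$ is controlled below by (a constant multiple of) $\|\widehat{h_\lambda}\|_q$. The subtlety is that $V_\f h_\lambda(x,\omega)$ does decay in $x$ away from the support of $h$ but is not compactly supported there, so one must verify that the tails in $x$ do not spoil the comparison and that the local-in-$(x,\omega)$ structure genuinely reproduces the $\Fur L^q$ behavior in $\omega$. I expect this to follow cleanly from the localization of the Gaussian window and the equivalence of $M^q$ with $\Fur L^q$ on compact sets, but it is the place where care is needed. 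The second step, by contrast, should reduce to a bookkeeping computation with the explicit exponents in \eqref{dd1} once the Gaussian formula of Lemma \ref{ll1} is in hand.
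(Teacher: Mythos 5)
Your reduction to the Gaussian window $\f(t)=e^{-\pi|t|^2}$ via Lemma \ref{convstft} and the convolution relation \eqref{conv0}, and your proof of $p\geq q'$ by testing \eqref{m11} on the dilated Gaussians $\f_\lambda$ using Lemma \ref{ll1} and \eqref{dd1}, coincide with the paper's argument (the relevant limit is $\lambda\to+\infty$, where $\|V_\f \f_\lambda\|_{W(L^p,L^q)}\asymp\lambda^{-d/(2q')}$ is compared with $\|\f_\lambda\|_{W(L^p,L^q)}\asymp\lambda^{-d/(2p)}$). The gap is in your treatment of $q\geq2$. You need the lower bound $\|V_\f h_\lambda\|_{W(L^p,L^q)}\gtrsim\|\widehat{h_\lambda}\|_{q}$ for \emph{all} $p$, and you correctly flag this as the delicate point, but you do not supply it. When $p<q$ the inclusion \eqref{lp} goes the wrong way: $L^q\hookrightarrow W(L^p,L^q)$, so the amalgam norm of $V_\f h_\lambda$ is a priori \emph{smaller} than its $L^q$ norm, and your heuristic that the local structure ``reproduces the $\Fur L^q$ behaviour in $\omega$'' would require a localized reverse Bernstein-type inequality, uniform in $x$ and $\lambda$, which is neither in the paper's toolbox nor as routine as your sketch suggests.

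The paper sidesteps this entirely by proving $p\geq q'$ \emph{first} and then splitting cases. If $p\leq q$, then $q\geq p\geq q'$ already forces $q\geq2$ with no further computation. If $p>q$, then \eqref{lp} gives $W(L^p,L^q)\hookrightarrow L^q$, so \eqref{m11} yields $\|f\|_{M^q}\asymp\|V_\f f\|_{L^q}\lesssim\|f\|_{W(L^p,L^q)}$; testing this on $f=h_\lambda$, using the equivalence of the $M^q$ and $\Fur L^q$ norms for distributions supported in a fixed compact set (exactly as in Proposition \ref{liebs}) together with the lower bound \eqref{lb} of Lemma \ref{l1}, and letting $\lambda\to+\infty$ gives $q\geq2$. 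If you reorder your two steps and adopt this case split, your argument closes; as written, the case $p<q$ of your first step remains unproven.
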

\begin{proof}
We claim that we may just
consider the case of the
window $\f(t)=e^{-\pi|t|^2}$.
Indeed, if $g$ is a non-zero
window function in $M^1$,
Lemma \ref{convstft} and
\eqref{conv0} give
\[
\|V_{g}f\|_{W(L^p,L^q)}\leq
\frac1{\|\f\|^2_2}\|V_\f
f\|_{W(L^p,L^q)}\|V_g\f\|_{L^1}
\]
and
\[
\|V_{\f}f\|_{W(L^p,L^q)}\leq
\frac1{\|g\|_2^2}\|V_g
f\|_{W(L^p,L^q)}\|V_\f g
\|_{L^1},
\]
so that
$\|V_{g}f\|_{W(L^p,L^q)}\asymp
\|V_{\f}f\|_{W(L^p,L^q)}$.
 Hence, from now on, we assume
 $g=\f$.\par
 First we prove that
$p\geq q'$. Let
$\f_\lambda(t)=
e^{-\pi\lambda|t|^2}$. Then,
by Lemma \ref{ll1}, we have
$|V_\f
\f_\lambda(x,\omega)|=(\lambda+1)^{-\frac{d}{2}}
e^{-\frac{\pi(\lambda
|x|^2+|\omega|^2)}{\lambda+1}}.$
By taking $g\phas=(g_1\otimes
g_2)\phas$ as window function
in the definition of the
Wiener amalgam norm, one sees
that
\begin{equation}\label{f1}
\|V_\f
\f_\lambda\|_{W(L^p,L^q)}\asymp
(\lambda+1)^{-\frac{d}{2}}
\|e^{-\frac{\pi\lambda|\cdot|^2}
{\lambda+1}}\|_{W(L^p,L^q)}\cdot\|e^{-\frac{\pi|\cdot|^2}
{\lambda+1}}\|_{W(L^p,L^q)}.
\end{equation}
By the estimate \eqref{dd1}, it
turns out, for
$\lambda\geq1$,
\begin{equation}\label{fi1}
\|\f_\lambda\|_{W(L^p,L^q)}\asymp
\lambda^{-\frac{d}{2q}}
(\lambda+1)^
{\frac{d}{2}\left(\frac{1}{q}-\frac{1}{p}\right)}\asymp\lambda^{-\frac{d}{2p}},
\end{equation}
and
\[
\|e^{-\frac{\pi\lambda|\cdot|^2}
{\lambda+1}}\|_{W(L^p,L^q)}\asymp
\left(\frac{\lambda}{\lambda+1}\right)^{-\frac{d}{2q}}
\left(\frac{\lambda}{\lambda+1}+1\right)^
{\frac{d}{2}\left(\frac{1}{q}-\frac{1}{p}\right)},
\]
\[
\|e^{-\frac{\pi|\cdot|^2}
{\lambda+1}}\|_{W(L^p,L^q)}\asymp
\left(\frac{1}{\lambda+1}\right)^{-\frac{d}{2q}}
\left(\frac{1}{\lambda+1}+1\right)^
{\frac{d}{2}\left(\frac{1}{q}-\frac{1}{p}\right)},
\]
which, by \eqref{f1}, give
\[
\|V_\f
\f_\lambda\|_{W(L^p,L^q)}\asymp
\lambda^{-\frac{d}{2q'}}.
\]
Letting $\lambda\to+\infty$, the previous estimate and \eqref{fi1}  yield
 $p\geq q'$.\par Let us now
prove the condition $q\geq2$.
If $p\leq q$ this follows
from the condition $p\geq
q'$, so that we can suppose
$p> q$. Then
$W(L^p,L^q)\hookrightarrow
L^q$, and from \eqref{m11} we
deduce
\[
\|f\|_{M^q}\asymp\|V_\f
f\|_{L^q}\lesssim
\|f\|_{W(L^p,L^q)}.
\]
We now argue as in the proof of Proposition \ref{liebs} and  test this estimate on
the functions $f=h_\lambda$
in Lemma \ref{l1}. This  implies
\[
\|V_\f h_\lambda\|_q\asymp\|h_\lambda\|_{M^q}\asymp\|\widehat{h_\lambda}\|_{L^q}\lesssim
\|h_\lambda\|_{W(L^p,L^q)}=C_{p,q}.
\]
Since the constant $C_{p,q}$ is independent
of $\lambda$, it follows from
 \eqref{lb},   letting
 $\lambda\to+\infty$, that
 $q\geq2$.
\end{proof}
\section{Sufficient Boundedness Conditions for Localization Operators}
In what follows, we study the boundedness of localization operators on $L^p$  and on Wiener amalgam spaces.

First of all, we prove
sufficient boundedness
results for localization
operators acting on Lebesgue
spaces. This is in fact a
particular case of Theorem
\ref{contwiener}, but we
establish it separately for
the benefit of the reader.
\begin{theorem}\label{contlp}
Let  $a\in W(L^p,L^q)(\rdd)$, $\f_1,\f_2\in M^{1}(\Ren)$.
Then
$\gaw$ is bounded  on  $L^r(\Ren)$,
 for all $1\leq p,q,r\leq\infty$,
$\frac1q \geq  |\frac 1r-\frac12|$,
with the uniform estimate
\begin{equation}\label{bdlp}\|\gaw\|_{B(L^r)}\lesssim  \|a\|_{W(L^p,L^q)}\|\f_1\|_
{M^{1}}\|\f_2\|_{ M^{1}}.\end{equation}
\end{theorem}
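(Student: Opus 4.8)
The plan is to pass to the weak formulation \eqref{anti-Wickg} together with the duality of Wiener amalgam spaces, so as to reduce the whole theorem to one bilinear estimate for a product of two STFTs, and then to prove that estimate \emph{sharply}; the crux will be to avoid a factor-two loss in the exponents that the obvious argument produces. Concretely, for $f,g\in\cS(\rd)$ I would write $\langle\gaw f,g\rangle=\langle a,\overline{V_{\f_1}f}\,V_{\f_2}g\rangle$ from \eqref{anti-Wickg}, and invoke the duality $W(L^p,L^q)'=W(L^{p'},L^{q'})$ (Lemma~\ref{WA}(iv), \eqref{duality}) to obtain
\[
|\langle\gaw f,g\rangle|\le\|a\|_{W(L^p,L^q)}\,\|\overline{V_{\f_1}f}\,V_{\f_2}g\|_{W(L^{p'},L^{q'})}.
\]
Everything then reduces to showing $\|\overline{V_{\f_1}f}\,V_{\f_2}g\|_{W(L^{p'},L^{q'})}\lesssim\|f\|_{r}\|g\|_{r'}\|\f_1\|_{M^1}\|\f_2\|_{M^1}$ in the stated range; taking the supremum over $g$ in the unit ball of $L^{r'}$ then gives \eqref{bdlp}.

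The heart of the matter is the range $1\le q\le2$, where I would \emph{not} estimate the two STFT factors separately (doing so forces $L^{\sigma}\cdot L^{\sigma}=L^{\sigma/2}$ in the frequency variable and only yields the smaller region $1/q\ge 2|1/r-1/2|$). Instead I pass to the Fourier side. By Lemma~\ref{propstft}(iv), $\overline{V_{\f_1}f}\,V_{\f_2}g$ has Fourier transform equal, up to the phase-space rotation $(x,\o)\mapsto(-\o,x)$, to $V_gf\cdot\overline{V_{\f_2}\f_1}$. Here $V_gf$ is a \emph{single} STFT, so Proposition~\ref{lieb} with $p=\sigma:=\max(r,r')\ge2$ (whence $\sigma'=\min(r,r')$ and the hypothesis $\sigma'\le\min(r,r')$ holds) gives $\|V_gf\|_{L^\sigma(\rdd)}\lesssim\|g\|_{r'}\|f\|_r$, while $\overline{V_{\f_2}\f_1}\in W(\Fur L^1,L^1)\hookrightarrow W(L^\infty,L^1)$ by Lemma~\ref{amalg}, with norm $\lesssim\|\f_1\|_{M^1}\|\f_2\|_{M^1}$. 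The pointwise product rule \eqref{point0} then places $V_gf\cdot\overline{V_{\f_2}\f_1}$ in $W(L^\sigma,L^1)$ (locally $L^\sigma\cdot L^\infty\hookrightarrow L^\sigma$, globally $L^\sigma\cdot L^1\hookrightarrow L^1$). Since $W(L^\sigma,L^1)$ is invariant under the rotation and $\sigma\ge2$, I get $\widehat F\in W(L^2,L^1)$ via \eqref{lp}, and Hausdorff--Young \eqref{HY} yields $F:=\overline{V_{\f_1}f}\,V_{\f_2}g\in W(L^\infty,L^2)$. Finally $W(L^\infty,L^2)\hookrightarrow W(L^{p'},L^{q'})$ by \eqref{lp}, the global embedding $L^2\hookrightarrow L^{q'}$ requiring exactly $q\le2$. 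This settles the product estimate for all $r$ whenever $q\le2$.

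It remains to reach $2\le q\le\infty$. For $r=2$ I argue directly: since $L^2=M^2$, Lemma~\ref{amalg} puts both $V_{\f_1}f$ and $V_{\f_2}g$ in $W(\Fur L^1,L^2)$, and \eqref{point0} gives $F\in W(\Fur L^1,L^1)\hookrightarrow W(L^{p'},L^1)=W(L^p,L^\infty)'$; hence $\gaw$ is bounded on $L^2$ for \emph{every} $q$, in particular at the vertex $(1/q,1/r)=(0,1/2)$. The admissible set $\{1/q\ge|1/r-1/2|\}$ with $q\ge2$ is the closed triangle with vertices $(0,1/2)$, $(1/2,0)$, $(1/2,1)$, i.e.\ the convex hull of the point $r=2,q=\infty$ and the segment $q=2$ already treated in the previous step. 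Bilinear complex interpolation of the map $(a,f)\mapsto\gaw f$ between these data, using Lemma~\ref{WA}(iii) to interpolate the symbol spaces (the partner $L^2$ having absolutely continuous norm, and the local component frozen at $L^p$) and the usual interpolation of the $L^r$ spaces, fills this triangle. The corners $r\in\{1,\infty\}$ occur only for $q\le2$ and are already included above, and the Banach adjoint $(\gaw)^{\ast}=\gawbar$ may be used to trade $L^r$- for $L^{r'}$-boundedness where convenient.

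The main obstacle is precisely the sharpness in the second paragraph: the symmetric handling of the two STFT factors is genuinely lossy, and recovering the correct range forces one to use the identity of Lemma~\ref{propstft}(iv), which converts the bilinear object into a single STFT (amenable to Lieb's sharp bound) times a fixed Schwartz-type weight, at the cost of one Hausdorff--Young step whose admissibility $q\le2$ is exactly the boundary of what this method reaches. A secondary nuisance is the interpolation for $q\ge2$: one must keep the local exponent $p$ frozen, verify the absolute-continuity hypothesis of Lemma~\ref{WA}(iii) at the $L^\infty$ and $W(\cdot,L^\infty)$ endpoints, and handle the non-reflexive corners $r\in\{1,\infty\}$, $q=\infty$ by the first step and the duality $(\gaw)^{\ast}=\gawbar$ rather than by interpolation.
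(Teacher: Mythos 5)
Your argument is correct, but it takes a genuinely different route from the paper's. The paper reduces to $p=1$, writes $\gaw$ as an integral operator with kernel $K(x,y)=\int_{\rd} V_{\f_2^*}(\overline{T_y\f _1^*})(x,\o)\,\cF a(\o,y-x)\,d\o$, and verifies the classical Schur conditions $K\in L^{1,\infty}\cap L^{\infty}L^1$ for $1\le q\le 2$ by H\"older and Hausdorff--Young in the amalgam scale, quoting the vertex $(q,r)=(\infty,2)$ from the $M^\infty$-symbol result of \cite{CG02}. You instead start from the weak form \eqref{anti-Wickg}, pair $a$ against $F=\overline{V_{\f_1}f}\,V_{\f_2}g$ by amalgam H\"older, and control $\|F\|_{W(L^{p'},L^{q'})}$ by using Lemma \ref{propstft}(iv) to turn $\widehat F$ into the single STFT $V_gf$ (estimated by Lieb's inequality with exponent $\sigma=\max(r,r')$) times $\overline{V_{\f_2}\f_1}\in W(\cF L^1,L^1)\hookrightarrow W(L^\infty,L^1)$, followed by one Hausdorff--Young step; the vertex $r=2$ you obtain directly from Lemma \ref{amalg}. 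The geometric skeleton --- the segment $q\le 2$ for all $r$, the vertex $r=2$ for all $q$, and complex interpolation in between --- is identical to the paper's. Your version is symmetric in $f$ and $g$ (so the $r\leftrightarrow r'$ duality is built in), treats all $p$ at once through the local embedding $L^\infty\subset L^{p'}$, and makes transparent that the constraint $q\le 2$ is exactly the admissibility of the Hausdorff--Young step; the paper's kernel/Schur formulation is the one that extends to boundedness on $W(L^r,L^s)$ in Theorem \ref{contwiener} via Proposition \ref{proschur}, which your pairing against the unit ball of $L^{r'}$ does not immediately yield. Two small points of hygiene: at $p=\infty$ or $q=\infty$ the inequality $|\langle a,F\rangle|\le\|a\|_{W(L^p,L^q)}\|F\|_{W(L^{p'},L^{q'})}$ should be justified by the pointwise product rule \eqref{point0} into $W(L^1,L^1)=L^1$ rather than by the duality \eqref{duality}, whose density hypothesis fails there; and for $r=\infty$ one recovers $\|\gaw f\|_{\infty}$ by testing against $g$ in the unit ball of $L^1$, which your case $\sigma=\infty$ of Lieb's bound indeed covers.
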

Figure 2 illustrates the range of exponents $(1/r,1/q)$ for the
boundedness of $\gaw$.
  \vspace{1.2cm}

 \begin{center}
           \includegraphics{figWick.1}
            \\
           $ $
\end{center}
           \begin{center}{Figure 2:
            Estimate \eqref{bdlp} holds for all pairs $(1/r,1/q)$ in the shadowed region.}
           \end{center}
\begin{proof}
By the inclusion relations for Wiener amalgam spaces, it suffices
to show the claim for $p=1$. For $q=\infty$, $r=2$, the result was
already proved in \cite[Theorem 1.1]{CG02}. Indeed, using the
inclusion $L^1\subset \cF L^\infty$ and the inclusion relations
for Wiener amalgam spaces, we have $W(L^1,L^\infty)\subset W(\cF
L^\infty,L^\infty)= M^\infty$. Hence, the symbol $a$ is in the
modulation space $M^\infty$ and, consequently, the operator $\gaw$
is bounded on $M^2=L^2$.

Let us prove the thesis in the cases
$1\leq q\leq2$ and $1\leq r\leq\infty$,
so that, using the interpolation
diagram of Figure 1, we attain the
desired result.

By the  Schur's test (see, e.g.,
\cite[Lemma 6.2.1]{book}), it is enough
to show that the kernel of $\gaw$
belongs to the spaces $L^{1,\infty}$
and $L^\infty L^1$, defined in
\eqref{normamista} and
\eqref{normainversa}, respectively. Let
us start with the first case. $\gaw$ is
the integral operator with kernel
\begin{align}
K(x,y)&=\intrdd a(t,\o) M_\omega T_t \f _2(x)M_{-\omega}  \overline{T_t\f _1(y)}dt d\omega\label{kernelaw0}\\
&=  \intrd \cF_2 a(t,y-x) \overline{T_t\f _1(y)}T_t\f_2(x) dt\\
&=\intrd \cF(\overline{\f _1(y-\cdot)}\f_2(x-\cdot))(\o)\cF a(\o,y-x) d\o\\
&=\intrd V_{\f_2^*}
(\overline{T_y\f _1^*})\phas
\cF a(\o,y-x)
d\o,\label{kernelaw}
\end{align}
with the notation
$f^*(t)=f(-t)$. Using the
pointwise multiplication
properties for Wiener amalgam
spaces \eqref{point0}, and
setting
$\psi_y(x,\o)=(\o,y-x)$ we
obtain
\[
\intrd |K(x,y)| dx\leq \|
V_{\f_2^*} (\overline{T_y\f
_1^*})\|_{W(L^q,L^1)}\|(\Fur
a)\circ\psi_y\|_{W(L^{q'},L^\infty)}
\]
On the other hand, it is
easily seen that $\|(\Fur
a)\circ\psi_y\|_{W(L^{q'},L^\infty)}\asymp
\|(\Fur
a)\|_{W(L^{q'},L^\infty)}$,
uniformly with respect to
$y$. Hence, by the inclusion
$W(L^1, L^q)\subset \cF
W(L^{q'},L^\infty)$ (see the
Hausdorff-Young property
\eqref{HY}), for $1\leq q\leq
2$, we deduce
\begin{align*}
\intrd |K(x,y)| dx &\leq \|
V_{\f_2^*} (\overline{T_y\f
_1^*})\|_{W(L^q,L^1)}\|a\|_{
W(L^1,L^q)} \lesssim \|
V_{\f_2^*} (\overline{T_y\f
_1^*})\|_{M^1}\|a\|_{
W(L^1,L^q)}\\ &\lesssim
\|\overline{T_y\f
_1^*}\|_{M^1}\|\f_2^*\|_{M^1}\|a\|_{
W(L^1,L^q)}=\|\f
_1\|_{M^1}\|\f_2\|_{M^1}\|a\|_{
W(L^1,L^q)},
\end{align*}
where we have used $M^1=W(\cF
L^1,L^1)\subset W(L^q,L^1)$,
by the local inclusion $\cF
L^1\subset L^q$ (see Lemma
\ref{WA}, item (ii)), and
Proposition \ref{mpcha} with
$p=1$. Hence,
\[
\|K\|_{L^{1,\infty}}\lesssim
\|a\|_{ W(L^1,L^q)} \|\f
_1\|_{M^1}\|\f_2\|_{M^1}.
\]
Similarly, one obtains
$$\|K\|_{L^\infty L^{1}}\lesssim \|a\|_{W(L^1,L^q)} \|\f
_1\|_{M^1}\|\f_2\|_{M^1}$$
and the estimate \eqref{bdlp}
follows.
\end{proof}

To study the boundedness of  localization operators  on Wiener
amalgam spaces, we  rewrite them as integral operators and use the
following Schur-type test for integral operators on Wiener amalgam
spaces.

\begin{proposition}\label{proschur}
Consider an integral operator defined  by
$$D_Kf(x)=\intrd K(x,y) f(y) \,dy.
$$
If the kernel $K$ belongs to the following spaces (see definitions \eqref{normamista} and \eqref{normainversa}):
\begin{equation}\label{cond1}
K\in W(L^{1,\infty}, L^{\infty}L^1)(\rdd)\cap W(L^\infty L^1,
L^{1,\infty})(\rdd)
\end{equation}
and
\begin{equation}\label{cond2}K\in
L^\infty L^1(\rdd)\cap L^{1,\infty}(\rdd),
\end{equation} then
the operator $D_K$ is continuous on $W(L^p,L^q)(\rd)$, for every $1\leq
p,q\leq\infty$.
\end{proposition}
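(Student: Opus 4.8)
The plan is to establish boundedness at the four ``corner'' spaces $W(L^1,L^1)=L^1$, $W(L^\infty,L^\infty)=L^\infty$, $W(L^1,L^\infty)$ and $W(L^\infty,L^1)$, and then to fill in the whole range $1\le p,q\le\infty$ by complex interpolation and duality. Throughout I write $Q=[0,1)^d$, $Q_n=n+Q$ for $n\in\zd$, and use the equivalent discrete norm \eqref{wienerdis}, so that with $b_m=\|f\|_{L^p(Q_m)}$ one has $\|f\|_{W(L^p,L^q)}\asymp\|(b_m)_m\|_{\ell^q}$. The local behaviour of $K$ is governed by two discrete Schur arrays, $\alpha_{n,m}=\esssup_{x\in Q_n}\intrd\chi_{Q_m}(y)|K(x,y)|\,dy$ and $\beta_{n,m}=\esssup_{y\in Q_m}\intrd\chi_{Q_n}(x)|K(x,y)|\,dx$. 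Unwinding the definitions \eqref{normamista}--\eqref{normainversa}, hypothesis \eqref{cond1} says exactly that $\sup_n\sum_m\beta_{n,m}<\infty$ and $\sup_m\sum_n\alpha_{n,m}<\infty$, whereas \eqref{cond2} is the classical global Schur condition $\esssup_x\intrd|K(x,y)|\,dy<\infty$ and $\esssup_y\intrd|K(x,y)|\,dx<\infty$.

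First I would dispose of the diagonal. By \eqref{cond2} and the classical Schur test (\cite[Lemma 6.2.1]{book}), $D_K$ is bounded on $L^r=W(L^r,L^r)$ for every $1\le r\le\infty$; in particular it is bounded on the corner spaces $L^1$ and $L^\infty$. For the two anti-diagonal corners I would argue discretely. Writing $f=\sum_m f\chi_{Q_m}$ and estimating $c_n=\|D_Kf\|_{L^p(Q_n)}$ block by block, a direct estimate (Tonelli's theorem when $p=1$, pulling the essential supremum through the sum when $p=\infty$) gives the scalar bounds $c_n\le\sum_m\beta_{n,m}b_m$ for $p=1$ and $c_n\le\sum_m\alpha_{n,m}b_m$ for $p=\infty$, with the corresponding local norms $b_m$. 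Taking the $\ell^\infty$ norm in the first case uses only the row bound $\sup_n\sum_m\beta_{n,m}<\infty$, yielding boundedness on $W(L^1,L^\infty)$; taking the $\ell^1$ norm in the second case uses only the column bound $\sup_m\sum_n\alpha_{n,m}<\infty$, yielding boundedness on $W(L^\infty,L^1)$. Thus each half of \eqref{cond1} feeds exactly one anti-diagonal corner.

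With all four corners in hand I would fill the square by the complex interpolation of Lemma \ref{WA}(iii). Every target $W(L^p,L^q)$ with $q<\infty$ is reached by interpolating two already-established corner or edge spaces whose global component is $L^1$, or $L^q$ with $1<q<\infty$; both of these have absolutely continuous norm, so the interpolation identity applies.

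The one genuinely delicate point, and the step I expect to be the main obstacle, is the top edge $q=\infty$ with $1<p\le\infty$: there the global component $L^\infty$ has no absolutely continuous norm, so it can never be produced as an interpolation endpoint. I would resolve this by duality. A change of variables shows that the adjoint kernel $K^*(x,y)=\overline{K(y,x)}$ satisfies $\alpha^*_{n,m}=\beta_{m,n}$ and $\beta^*_{n,m}=\alpha_{m,n}$, so that $K^*$ again satisfies \eqref{cond1} and \eqref{cond2} (with the two constants interchanged). Consequently $D_K^*=D_{K^*}$ is, by the corner-plus-interpolation argument applied to $K^*$, bounded on $W(L^{p'},L^1)$ for all $1\le p'\le\infty$; since test functions are dense in $L^{p'}$ (for $p'<\infty$) and in $L^1$, Lemma \ref{WA}(iv) identifies $\big(W(L^{p'},L^1)\big)'=W(L^p,L^\infty)$, and dualizing returns the boundedness of $D_K$ on $W(L^p,L^\infty)$ for $1<p\le\infty$. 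Together with the corner $W(L^1,L^\infty)$ this closes the top edge, and hence covers the entire range $1\le p,q\le\infty$.
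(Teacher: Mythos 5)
Your proof is correct and follows essentially the same route as the paper's: decompose $f$ over the unit cubes to obtain the two anti-diagonal corner bounds on $W(L^1,L^\infty)$ and $W(L^\infty,L^1)$ from \eqref{cond1}, invoke the classical Schur test for the diagonal via \eqref{cond2}, fill in $q<\infty$ by complex interpolation, and recover the $q=\infty$ edge by duality with the transposed kernel. Your reformulation of \eqref{cond1} as discrete Schur arrays $\alpha_{n,m},\beta_{n,m}$ is only a notational repackaging of the paper's cube-by-cube estimates, and your explicit flagging of the failure of absolute continuity of $L^\infty$ as the reason for the duality detour matches the paper's treatment of the cases $q=\infty$, $1<p<\infty$.
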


\begin{proof}
Let $\mathcal{Q}$ denote the unit cube $[0,1)^d$. Then,  we can
write any  function $f$ on $\rd$ as $f(x)=\sum_{n\in\zd}
f(x)T_n\chi_\mathcal{Q}(x)$. Moreover, it is straightforward to
show that $D_K f(x)=\sum_{n\in\zd} D_K(fT_n\chi_\mathcal{Q})(x)$.

We first study the boundedness of $D_K$ on $W(L^1,L^\infty)$.
Using the expression above, we have
$$\|D_K f\|_{W(L^1,L^\infty)}=\sup_{k\in\zd}\|D_K(\sum_{n\in\zd}fT_n\chi_\mathcal{Q})T_k\chi_\mathcal{Q}\|_{1}\leq
\sup_{k\in\zd}\sum_{n\in\zd}\|D_K(fT_n\chi_\mathcal{Q})T_k\chi_\mathcal{Q}\|_1.
$$
The equality
$T_n\chi_\mathcal{Q}(y)=T_n\chi^2_\mathcal{Q}(y)=(T_n\chi_\mathcal{Q}(y))^2$,
yields
\begin{align*}
\|D_K(fT_n\chi_\mathcal{Q})T_k\chi_\mathcal{Q}\|_1&\leq\intrd\left|\intrd K(x,y)f(y)T_n\chi_\mathcal{Q}(y) T_k\chi_\mathcal{Q}(x)dxdy\right|\\
&\leq\|T_n\chi_\mathcal{Q}f\|_1 \sup_{y\in\rd}\left(\intrd
T_n\chi_\mathcal{Q}(y)|K(x,y)|T_k\chi_\mathcal{Q}(x)\,dx\right).
\end{align*}
Hence,
\begin{align*}
\sup_{k\in\zd}\sum_{n\in\zd}\|D_K(fT_n\chi_\mathcal{Q})T_k\chi_\mathcal{Q}\|_1&\leq
\sup_{n\in\zd}\|fT_n\chi_\mathcal{Q}\|_1\sup_{k\in\zd}\sum_{n\in\zd}\|K(T_k\chi_\mathcal{Q}\otimes T_n\chi_\mathcal{Q})\|_{L^{1,\infty}}\\
&=\|f\|_{W(L^1,L^\infty)}\|K\|_{W(L^{1,\infty},L^{\infty}L^1)}.
\end{align*}

Using similar arguments, one easily  obtains
$$\|D_Kf\|_{W(L^\infty,L^1)}\leq \|f\|_{W(L^\infty,L^1)}\|K\|_{W(L^\infty L^1,L^{1,\infty})}.
$$
 Since the statement
holds for $p=q$ by the classical Schur's test, the operator $D_K$
is bounded on $W(L^p,L^p)=L^p$.  By  complex interpolation between
$(p,p)$ and  $(p,q)=(1,\infty)$, we get the boundedness of $D_K$
on every $W(L^p,L^q)$, $1\leq p< q<\infty$. The complex
interpolation between $(p,p)$ and $(p,q)=(\infty,1)$ yields the
boundedness of $D_K$ on every $W(L^p,L^q)$, $1\leq q<
p\leq\infty$. It remains to study the cases $q=\infty$ and
$1<p<\infty$.
 For
these cases we argue by duality: it suffices to verify that
\begin{equation}\label{ddua}|\la D_K f,g\ra|\leq \|f\|_{W(L^p,L^\infty)}\|g\|_{W(L^{p'},L^1)}, \quad\forall f\in W(L^p,L^\infty),\,\forall g\in W(L^{p'},L^1).
\end{equation}
Now
\begin{align*}
|\la D_K f,g\ra|&\leq \intrd \intrd |K(x,y)f(y)g(x)|dx dy =\intrd\left(\intrd |K(x,y) g(x)| dx\right)|f(y)| dy\\
&\leq
\|D_{\tilde{K}}g\|_{W(L^{p'},L^1)}\|f\|_{W(L^p,L^\infty)},\end{align*}
where $D_{\tilde{K}}$ is the operator with kernel
$\tilde{K}(x,y)=|K(y,x)|$. Since it satisfies the same assumptions
as $D_K$, it is continuous on $W(L^{p'},L^1)$. This yields
\eqref{ddua}.
\end{proof}

Observe that the condition \eqref{cond2} is the  assumption in the classical Schur's test
for the continuity of $D_K$ on $L^p$ spaces.  Furthermore, the
condition $K\in W(L^{1,\infty}, L^{\infty,1}) \cap W(L^{\infty,1},
L^{1,\infty})$ implies  the assumption \eqref{cond1}.

\begin{proposition}\label{bo}
Let $a\in
W(L^1,L^\infty)(\rdd)$,
$\f_1,\f_2\in M^1(\R^d)$.
Then the operator $\gaw$ is
bounded on $W(L^2,L^s)(\R^d)$
for every $1\leq
s\leq\infty$, with the
uniform estimate
\[
\|\gaw\|_{B(W(L^2,L^s))}\lesssim
\|a\|_{W(L^1,L^\infty)}\|\f_1\|_{M^1}
\|\f_2\|_{M^1}.
\]
\end{proposition}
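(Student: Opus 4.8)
The plan is to avoid the kernel conditions of Proposition \ref{proschur} altogether (indeed these cannot all hold here, since by the sharpness of Theorem \ref{contlp} a symbol $a\in W(L^1,L^\infty)$ does not give $L^r$-boundedness for $r\neq2$), and instead to argue by duality starting from the weak form \eqref{anti-Wickg}. First I would reduce to a bilinear estimate: for $f,g\in\cS(\Ren)$ we have $\la \gaw f,g\ra=\la a,\overline{V_{\f_1}f}\,V_{\f_2}g\ra$, and since $W(L^2,L^s)'=W(L^2,L^{s'})$ (reading $s=\infty$ through the predual pairing against $W(L^2,L^1)$, where $\cS$ is dense), it suffices to bound $|\la \gaw f,g\ra|$ over $\|g\|_{W(L^2,L^{s'})}\le1$. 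By the Hölder inequality for Wiener amalgam spaces (local $L^1$--$L^\infty$ pairing, global $L^\infty$--$L^1$ pairing) one gets $|\la a,\overline{V_{\f_1}f}\,V_{\f_2}g\ra|\le\|a\|_{W(L^1,L^\infty)}\,\|\overline{V_{\f_1}f}\,V_{\f_2}g\|_{W(L^\infty,L^1)(\Renn)}$, so the whole problem is reduced to controlling the amalgam norm of a product of two STFTs.

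The key technical ingredient I would isolate is a mapping property of the STFT into a \emph{mixed} Wiener amalgam space: for $\varphi\in M^1(\Ren)$, $h\in W(L^2,L^t)(\Ren)$, and $1\le t\le\infty$,
\[
\|V_\varphi h\|_{W(L^\infty,L^t)W(L^\infty,L^2)}\lesssim \|h\|_{W(L^2,L^t)}\,\|\varphi\|_{M^1},
\]
where the outer amalgam acts in the time variable $x$ and the inner one in the frequency variable $\omega$. The proof I envisage proceeds in two moves. For fixed $x$ one has $V_\varphi h(x,\cdot)=\Fur(h\,\overline{T_x\varphi})$, so Plancherel gives $\|V_\varphi h(x,\cdot)\|_{L^2_\omega}=\|h\,\overline{T_x\varphi}\|_{L^2}$; if $\varphi$ is compactly supported, $h\,\overline{T_x\varphi}$ is supported in a ball of $x$-independent radius, hence the inverse Fourier transform of $V_\varphi h(x,\cdot)$ is so supported, and Bernstein's inequality in the form of Proposition \ref{new} upgrades the $L^2_\omega$-norm to the $W(L^\infty,L^2)_\omega$-norm. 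Taking the $W(L^\infty,L^s)_x$-norm and absorbing the local supremum in $x$ into a slightly enlarged compactly supported window returns $\asymp\|h\|_{W(L^2,L^t)}$. The extension from compactly supported windows to general $\varphi\in M^1$ is handled through Lemma \ref{convstft}, which dominates $|V_\varphi h|$ by $|V_g h|\ast|V_\varphi\gamma|$ with $g$ compactly supported and $\|V_\varphi\gamma\|_{L^1}\lesssim\|\varphi\|_{M^1}$, together with the fact that convolution with an $L^1(\Renn)$ kernel preserves the mixed amalgam norm (Young's inequality, cf.\ Lemma \ref{WA}(i)). Here the local exponent $2$ is forced precisely because Plancherel is the natural tool in the frequency variable.

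To conclude I would apply this lemma with $t=s$ to $V_{\f_1}f$ and with $t=s'$ to $V_{\f_2}g$, and then invoke the pointwise product property \eqref{point0} for mixed amalgam spaces: using $L^\infty\cdot L^\infty\hookrightarrow L^\infty$ locally, $L^s\cdot L^{s'}\hookrightarrow L^1$ in $x$, and $L^2\cdot L^2\hookrightarrow L^1$ in $\omega$, the product $\overline{V_{\f_1}f}\,V_{\f_2}g$ lands in $W(L^\infty,L^1)W(L^\infty,L^1)=W(L^\infty,L^1)(\Renn)$ with norm $\lesssim\|f\|_{W(L^2,L^s)}\|g\|_{W(L^2,L^{s'})}\|\f_1\|_{M^1}\|\f_2\|_{M^1}$. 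Substituting into the Hölder bound above and taking the supremum over $g$ yields \eqref{bo}'s estimate, with the endpoint $s=\infty$ obtained by pairing against the predual $W(L^2,L^1)$. The main obstacle is the mixed-amalgam STFT estimate of the second paragraph: the genuinely nontrivial points are the Bernstein-type upgrade in the frequency variable and the passage from compactly supported to arbitrary $M^1$ windows while keeping the mixed amalgam structure intact under the two-dimensional convolution; everything downstream is Hölder and the pointwise product rule.
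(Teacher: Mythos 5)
Your overall strategy --- duality via the weak form \eqref{anti-Wickg}, the amalgam H\"older bound $|\la a,\overline{V_{\f_1}f}\,V_{\f_2}g\ra|\le\|a\|_{W(L^1,L^\infty)}\|\overline{V_{\f_1}f}\,V_{\f_2}g\|_{W(L^\infty,L^1)}$, and a Plancherel-plus-Bernstein upgrade --- is exactly the paper's, and your opening observation that Proposition \ref{proschur} cannot apply here is correct. But there is a genuine gap at the final step. Your key lemma places each STFT factor in the \emph{iterated} mixed amalgam space $W(L^\infty,L^t)_xW(L^\infty,L^2)_\omega$, whose norm is
\[
\Bigl(\sum_{k\in\zd}\,\sup_{x\in k+\mathcal{Q}}\Bigl(\sum_{n\in\zd}\sup_{\omega\in n+\mathcal{Q}}|F(x,\omega)|^2\Bigr)^{t/2}\Bigr)^{1/t},
\]
with the supremum over the local $x$-cube taken \emph{outside} the sum over frequency cubes. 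The pointwise product rule then lands $\overline{V_{\f_1}f}\,V_{\f_2}g$ in the iterated space $W(L^\infty,L^1)_xW(L^\infty,L^1)_\omega$, with norm $\sum_k\sup_{x}\sum_n\sup_\omega|F|$. This is \emph{not} the space $W(L^\infty,L^1)(\rdd)$, whose norm is the larger quantity $\sum_{k,n}\sup_{x,\omega}|F|$: since $\sup_x\sum_n\le\sum_n\sup_x$, the inclusion goes the wrong way (take $F=\sum_n\chi_{A_n}\otimes\chi_{n+\mathcal{Q}}$ with $A_n\subset\mathcal{Q}$ disjoint: the iterated norm equals $1$ while the genuine one is infinite). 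Consequently the H\"older pairing with $a\in W(L^1,L^\infty)(\rdd)$ --- which needs the genuine two-dimensional norm of the product, or equivalently an iterated $W(L^1,L^\infty)_xW(L^1,L^\infty)_\omega$ bound on $a$ that $\|a\|_{W(L^1,L^\infty)(\rdd)}$ does not control --- is not justified. Applying Bernstein in the $\omega$-variable of each factor separately cannot recover the supremum over $x$ inside the sum over $n$.

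The paper's way around this is to make the \emph{product} band-limited on $\rdd$ rather than each factor band-limited in $\omega$ alone: write $f=\sum_k fT_k\chi_{\mathcal{Q}}$, $g=\sum_h gT_h\chi_{\mathcal{Q}}$ and $\f_i=\sum_l\psi(D-l)\f_i$; then Lemma \ref{propstft} (iv)--(v) show that each piece $V_{\f_{1,l}}(fT_k\chi_{\mathcal{Q}})\overline{V_{\f_{2,m}}(gT_h\chi_{\mathcal{Q}})}$ has Fourier transform supported in a ball of $\rdd$ of radius independent of $k,h,l,m$, so Proposition \ref{new}, applied on $\rdd$ to the product itself, reduces its genuine $W(L^\infty,L^1)(\rdd)$ norm to its $L^1(\rdd)$ norm; only then do Cauchy--Schwarz, Parseval and a discrete Young inequality in $k-h$ finish the argument. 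If you wish to keep your formulation, you would need to strengthen your key lemma to the non-iterated norm $W(L^\infty,L^tL^2)(\rdd)$, with a genuinely two-dimensional local component, and proving that essentially forces the paper's double decomposition anyway.
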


\begin{proof}
We have to prove the estimate
\[
|\langle \gaw f,g\rangle|\leq
C\|a\|_{W(L^1,L^\infty)}\|f\|_{W(L^2,L^s)}\|g\|_{W(L^2,L^{s'})}\|\f_1\|_{M^1}
\|\f_2\|_{M^1},
\]
for every $f\in W(L^2,L^s)$,
$g\in W(L^2,L^{s'})$. Using the weak definition \eqref{anti-Wickg}, we can write
\[
\langle \gaw
f,g\rangle=\int_{\R^{2d}}
a(x,\omega)V_{\f_1}f(x,\omega)\overline{V_{\f_2}g(x,\omega)}\,dx\,d\omega.
\]
 By Lemma \ref{WA} (ii),
\[
|\langle \gaw f,g\rangle|\leq
\|a\|_{W(L^1,L^\infty)}\|V_{\f_1}f\overline{V_{\f_2}g}\|_{W(L^1,L^\infty)}.
\]
Write $f=\sum_{k\in\zd}
fT_{k}\chi_\mathcal{Q}$ and
$g=\sum_{h\in\zd}
gT_{h}\chi_\mathcal{Q}$.
Moreover, choose
$\psi\in\cS(\R^d)$ satisfying
\eqref{2.1} and write
$\f_1=\sum_{l\in\zd}\f_{1,l}$,
$\f_2=\sum_{m\in\zd}\f_{2,m}$,
with $\f_{1,l}=\psi(D-l)\f_1$
and $\f_{2,m}=\psi(D-m)\f_2$
(with the notation in
Proposition
\ref{caratterizzazione}). We
deduce
\begin{equation}\label{13uno}
|\langle \gaw f,g\rangle|\leq
\|a\|_{W(L^1,L^\infty)}\sum_{m,l\in\zd}\sum_{k,h\in\zd}\|V_{\f_{1,l}}(f
T_k\chi_\mathcal{Q})
\overline{V_{\f_{2,m}}(g
T_h\chi_\mathcal{Q})}\|_{W(L^1,L^\infty)}.
\end{equation}
 By applying Lemma
 \ref{propstft}
(iv) and (v) we see that
\begin{align*}
\big|\Fur\left(V_{\f_{1,l}}(f
T_k\chi_\mathcal{Q})
\overline{V_{\f_{2,m}}(g
T_h\chi_\mathcal{Q})}\right)&(x,\omega)\big|\\
&=|V_{g T_h\chi_\mathcal{Q}}(f
T_k\chi_\mathcal{Q})(-\omega,x)
\overline{V_{\f_{2,m}}(\f_{1,l})}(-\omega,x)|\\
&=|V_{g
T_h\chi_\mathcal{Q}}(f
T_k\chi_\mathcal{Q})(-\omega,x)
\overline{V_{\widehat{\f_2}T_{m}\psi}(\widehat{\f_1}T_{l}\psi)}(x,\omega)|.
\end{align*}
The key observation is now
the following one: if
$\gamma_1,\gamma_2\in
L^2(\R^d)$ are supported in
balls of radius, say, $R$,
then the STFT
$V_{\gamma_1}\gamma_2(x,\omega)$
has support in a strip of the
type $B(y_0,2R)\times\R^d$,
for some $y_0\in\R^d$. This
follows immediately from the
definition of STFT. Hence the
computation above shows that
the expression
\[V_{\f_{1,l}}(f T_k\chi_\mathcal{Q})
\overline{V_{\f_{2,m}}(g
T_h\chi_\mathcal{Q})}
\]
has Fourier transform
supported in a ball in
$\R^{2d}$ whose radius is
independent of $k,h,m,l$.
Hence it follows from
\eqref{13uno} and Proposition
\ref{new}
 that
\begin{equation}\label{13due}
|\langle \gaw f,g\rangle|\lesssim
\|a\|_{W(L^1,L^\infty)}\sum_{m,l\in\zd}\sum_{k,h\in\zd}\|V_{\f_{1,l}}(f
T_k\chi_\mathcal{Q})
\overline{V_{\f_{2,m}}(g
T_h\chi_\mathcal{Q})}\|_{1}.
\end{equation}
As a consequence of
Cauchy-Schwarz's inequality
and Parseval's formula, this
last expression is
\begin{align*}
&\leq
\|a\|_{W(L^1,L^\infty)}\sum_{m,l\in\zd}\sum_{k,h\in\zd}\int_{\R^d}\|V_{\f_{1,l}}(fT_k\chi_\mathcal{Q})(x,\cdot)\|_2\|V_{\f_{2,m}}(gT_h\chi_\mathcal{Q})(x,\cdot)\|_2\,dx\\
&=\|a\|_{W(L^1,L^\infty)}\sum_{m,l\in\zd}\sum_{k,h\in\zd}
\int_{\R^d}\|fT_k\chi_\mathcal{Q}T_x\f_{1,l}\|_{2}\|gT_h\chi_\mathcal{Q}T_x\f_{2,m}\|_2\,dx\\
&\leq
\|a\|_{W(L^1,L^\infty)}\sum_{k,h\in\zd}
\|f T_k\chi_\mathcal{Q}\|_{2}\|gT_h
\chi_\mathcal{Q}\|_2\sum_{m,l\in\zd}\int_{\R^d}
\|T_k\chi_\mathcal{Q}T_x\f_{1,l}\|_\infty
\|T_h
\chi_\mathcal{Q}T_x\f_{2,m}\|_\infty\,dx.
\end{align*}
We will prove that
\begin{equation*}
\int_{\R^d}
\|T_k\chi_\mathcal{Q}T_x\f_{1,l}\|_\infty
\|T_h
\chi_\mathcal{Q}T_x\f_{2,m}\|_\infty\,dx=\int_{\R^d}
\|\chi_\mathcal{Q}T_x\f_{1,l}\|_\infty
\|\chi_\mathcal{Q}T_{x+k-h}\f_{2,m}\|_\infty\,dx=v_{k-h,l,m},
\end{equation*}
for a sequence $v=v_{k,l,m}\in
l^1(\mathbb{Z}^{3d})$, satisfying
\begin{equation}\label{21b}
\|v\|_{l^1(\mathbb{Z}^{3d})}\lesssim\|\f_1\|_{M^1}\|\f_2\|_{M^1}.
\end{equation}
 Assuming \eqref{21b},  by H\"older's
 and Young's inequality,
\begin{align*}
|\langle &\gaw f,g\rangle|\lesssim
\|a\|_{W(L^1,L^\infty)}\sum_{m,l\in\zd}\sum_{k,h\in\zd}
v_{k-h,l,m}\|gT_h
\chi_\mathcal{Q}\|_2\|f
T_k\chi_\mathcal{Q}\|_{2}\\
&\lesssim \|a\|_{W(L^1,L^\infty)}
\sum_{m,l\in\zd}
\left(\sum_{k\in\zd}\left(\sum_{h\in
\zd} v_{k-h,l,m}\|gT_h
\chi_\mathcal{Q}\|_2\right)^{s'}\right)^{1/s'}\left(\sum_{k\in
\zd}\|f
T_k\chi_\mathcal{Q}\|_{2}^s\right)^{1/s}\\
&\leq \|a\|_{W(L^1,L^\infty)}
\sum_{m,l\in\zd}
\left(\sum_{k\in\zd}v_{k,l,m}\right)
\left(\sum_{h\in \zd}\|g
T_h\chi_\mathcal{Q}\|_{2}^{s'}\right)^{1/s'}
\left(\sum_{k\in \zd}\|f
T_k\chi_\mathcal{Q}\|_{2}^s\right)^{1/s}
\end{align*}
and the desired estimate follows by
\eqref{21b}.\par Let us now prove
\eqref{21b}. Indeed,
\begin{align}\label{13c}
\sum_{k\in\Z} v_{k,m,l}&=\int_{\R^d}
\|\chi_\mathcal{Q}T_x\f_{1,l}\|_\infty \sum_{k\in\Z} \|\chi_\mathcal{Q}T_{x+k-h}\f_{2,m}\|_\infty\,dx\\
&\asymp\|\f_{2,m}\|_{W(L^\infty,L^1)}\int_{\R^d}
\|\chi_\mathcal{Q}T_x\f_{1,l}\|_\infty
\,dx\asymp\|\f_{1,l}\|_{W(L^\infty,L^1)}
\|\f_{2,m}\|_{W(L^\infty,L^1)}\\
&\lesssim \|\f_{1,l}\|_{1}
\|\f_{2,m}\|_1,
\end{align}
where the last estimate
follows from Proposition
\ref{new}. Then, by
Proposition
\ref{caratterizzazione}, we
have
\[
\sum_{k,l,m\in\zd}
v_{k,m,n}\lesssim
\sum_{l\in\zd}\|\f_{1,l}\|_{1}\sum_{m\in\zd}\|\f_{1,m}\|_{1}\asymp\|\f_1\|_{M^1}\|\f_2\|_{M^1},
\]
which concludes the proof.
\end{proof}
\begin{remark}\rm
An argument similar to that
in the proof of Proposition
\ref{bo} (but simpler) shows
that, if $a\in
L^\infty(\rdd)$ and
$\f_1,\f_2\in
W(L^\infty,L^1)(\R^d)$, then
$\gaw$ is bounded on
$W(L^2,L^s)(\R^d)$ for every
$1\leq s\leq\infty$.
\end{remark}

Propositions \ref{proschur}
and \ref{bo}  are the main
ingredients to study the
boundedness of localization
operators on Wiener amalgam
spaces.
\begin{theorem}\label{contwiener}
Let   $a\in
W(L^p,L^q)(\rdd)$,
$\f_1,\f_2\in M^{1}(\R^d)$.
Then $\gaw$ is bounded  on
$W(L^{r},L^{s})(\Ren)$, for
all
\begin{equation}\label{indi}
1\leq p,q,r,s\leq\infty,\quad
\frac1q \geq \left|\frac
1{r}-\frac12\right|,
\end{equation}
with the uniform estimate
\begin{equation}\label{bdwin}\|\gaw\|_{B(W(L^{r},L^{s}))}\lesssim  \|a\|_{W(L^p,L^q)}\|\f_1\|_
{M^{1}}\|\f_2\|_{
M^{1}}.\end{equation}
\end{theorem}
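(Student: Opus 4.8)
The plan is to realize $\gaw$ as an integral operator and apply the Schur-type test of Proposition \ref{proschur} on the ``flat'' part of the region, reserving the vertex to Proposition \ref{bo}, and then to reach the full range \eqref{indi} by interpolation. First I would reduce to $p=1$: by the inclusion \eqref{lp} one has $W(L^p,L^q)\hookrightarrow W(L^1,L^q)$, so it suffices to prove \eqref{bdwin} for $a\in W(L^1,L^q)$. I would then single out the two extremal cases matching the two edges meeting at the vertex of Figure 2. The vertex $(1/r,1/q)=(1/2,0)$ is precisely Proposition \ref{bo}: for $a\in W(L^1,L^\infty)$ the operator $\gaw$ is bounded on $W(L^2,L^s)$ for every $s$. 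The segment $1\leq q\leq2$ is free of any restriction on $r$ (indeed $|1/r-1/2|\leq1/2\leq1/q$ there), and I would obtain boundedness on \emph{all} $W(L^r,L^s)$ from the Schur-type test.

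For this second case I would write $\gaw=D_K$ with the kernel computed in \eqref{kernelaw0}--\eqref{kernelaw},
\[
K(x,y)=\intrd V_{\f_2^*}(\overline{T_y\f_1^*})(x,\o)\,\cF a(\o,y-x)\,d\o,
\]
and check the hypotheses of Proposition \ref{proschur}. Condition \eqref{cond2}, namely $K\in L^{1,\infty}\cap L^\infty L^1$, is exactly what is already proved in Theorem \ref{contlp}. For the amalgam condition \eqref{cond1} it suffices, by the remark following Proposition \ref{proschur}, to verify $K\in W(L^{1,\infty},L^{\infty,1})\cap W(L^{\infty,1},L^{1,\infty})$. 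I would estimate these mixed amalgam norms along the lines of Theorem \ref{contlp}: split the product in $K$ by the pointwise-product property \eqref{point0}, transfer $a$ to the Fourier side via Hausdorff--Young \eqref{HY} (legitimate since $1\leq q\leq2$), and bound the factor $V_{\f_2^*}(\overline{T_y\f_1^*})$ by $\|\f_1\|_{M^1}\|\f_2\|_{M^1}$ using Lemma \ref{amalg} and Proposition \ref{mpcha}, all uniformly in the slice variable $y$ thanks to the translation structure encoded by $\psi_y(x,\o)=(\o,y-x)$.

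Finally I would fill the interior of the region \eqref{indi} by bilinear complex interpolation of the map $(a,f)\mapsto\gaw f$ (with $\f_1,\f_2$ fixed) between the vertex estimate of Proposition \ref{bo} and the edge $q=2$ supplied by the previous step. Using Lemma \ref{WA}(iii) on the symbol side, $[W(L^1,L^\infty),W(L^1,L^2)]_{[\theta]}=W(L^1,L^{q_\theta})$ with $1/q_\theta=\theta/2$, and on the domain side, $[W(L^2,L^s),W(L^{r_0},L^s)]_{[\theta]}=W(L^{r_\theta},L^s)$ with $1/r_\theta=(1-\theta)/2+\theta/r_0$, one gets $|1/r_\theta-1/2|=\theta|1/r_0-1/2|\leq\theta/2=1/q_\theta$; as $\theta$ and $r_0$ range over $[0,1]$ and $[1,\infty]$, the pairs $(1/r_\theta,1/q_\theta)$ sweep out exactly the triangle $1/q\geq|1/r-1/2|$ (the boundary and the case $s=\infty$ being recovered, as in Propositions \ref{proschur} and \ref{bo}, by a duality argument).

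I expect the main difficulty to be the second step, the verification of the amalgam kernel conditions. In contrast to the plain Lebesgue bounds of Theorem \ref{contlp}, here one must control the local and global behaviour of $K(x,y)$ separately in each of the two variables $x$ and $y$ and, crucially, produce constants that are uniform in $y$; this is exactly the point where the $M^1$ regularity of the windows and the shift-invariance built into $\psi_y$ are indispensable.
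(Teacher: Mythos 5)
Your proposal follows essentially the same route as the paper's proof: reduce to $p=1$, treat $1\le q\le 2$ by the Schur-type test of Proposition \ref{proschur} (with condition \eqref{cond2} imported from Theorem \ref{contlp} and the amalgam kernel conditions checked via the same pointwise-product, Hausdorff--Young and $M^1$ estimates, uniformly in the slice variable), take the vertex from Proposition \ref{bo}, and conclude by complex interpolation together with a duality argument for $s=\infty$. The only cosmetic differences are that you verify the slightly stronger kernel condition stated in the remark after Proposition \ref{proschur} instead of \eqref{cond1} directly, and that you interpolate with the single edge $q=2$ rather than with the whole band $1\le q\le 2$.
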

\begin{proof} We use similar arguments to those of Theorem \ref{contlp}. Again, it is enough to prove the claim for $p=1$. We shall show that
 the cases $1\leq q\leq 2$ yield the continuity of $\gaw $ on $W(L^{r},L^{s})$ for every $1\leq r,s\leq\infty$, so that, by complex
 interpolation with the case $(q,r)=(\infty,2)$, considered in Proposition \ref{bo}
 (see Lemma \ref{WA} (iii) and Figure 1) we obtain the desired boundedness of $\gaw$ under the conditions \eqref{indi}, if $s<\infty$.
 The remaining cases, when
 $s=\infty$ and $q>2$ (and therefore
 $r>1$)
 follows by duality, for
 then
 $W(L^r,L^s)=W(L^{r'},L^{s'})'$
 (Proposition \ref{WA}
 (iv)),  and
$(\gaw)^*=A^{\f_2,\f_1}_{\bar
a}$.\par Hence, let $1\leq
q\leq 2$. In order to prove
the boundedness  of $\gaw $
on $W(L^{r},L^{s})$ for every
$1\leq r,s\leq\infty$ we use
the Schur's test for Wiener
amalgam spaces given by
Proposition
\ref{proschur}.\par We
already verified
\eqref{cond2} for the
integral kernel $K$ of
$\gaw$, which was computed in
\eqref{kernelaw}. Let us
verify that $K\in
W(L^{1,\infty},L^{\infty}L^1)$.
To this end, we estimate the
kernel as follows:
\begin{align*}
 |K(x,y)| &\leq\intrd |V_{\f_2^*} (\overline{T_y\f _1^*})\phas \cF a(\o,y-x) |d\o\\
 &=\intrd  |V_{\f_2^*} \overline{\f _1^*}(x-y,\o) \cF a(\o,y-x)
 |d\o,
\end{align*}
since the STFT fulfills $V_g
(T_y f)\phas=e^{-2\pi i\o
y}V_gf(x-y,\o) $, see
\eqref{eql2}.  For sake of
simplicity, we set
$\Phi:=V_{\f_2^*}
\overline{\f _1^*}$.
Moreover, we introduce the
coordinate transformation
$\tau F(t,u)=F(-u,t)$. So
that the kernel $K$ can be
controlled from above by
$$|K(x,y)|\leq \intrd |\tau\Phi(\o,y-x)|  \,|\cF a(\o,y-x) |d\o.$$
Let us estimate
$\|K\|_{W(L^{1,\infty},L^\infty
L^1)}$. Settin $B(t):=\intrd
|\tau\Phi(\o,t)|  \,|\cF
a(\o,t) |d\o$, we obtain
\begin{align*}
\|K\|_{W(L^{1,\infty},L^\infty
L^1)}&=\sup_{k\in\zd}\sum_{n\in\zd}\sup_{y\in\rd}\intrd
T_k\chi_{\mathcal{Q}}(x)
T_n\chi_{\mathcal{Q}}(y)|K(x,y)|
dx\\
&\leq\sup_{k\in\zd}\sum_{n\in\zd}\sup_{y\in\rd}\intrd
T_k\chi_{\mathcal{Q}}(x)T_n\chi_{\mathcal{Q}}(y)|B(y-x)|\,dx\\
&\leq \sup_{k\in\zd}\sum_{n\in\zd}\sup_{y\in\rd}T_n\chi_{\mathcal{Q}}(y)(T_k\chi_{\mathcal{Q}}\ast|B|)(y)\\
&=\sup_{k\in\zd}\|T_k\chi_{\mathcal{Q}}\ast|B|\|_{W(L^\infty,L^1)}.
\end{align*}
Using $L^\infty\ast
L^1\hookrightarrow L^\infty$,
$L^1\ast L^1 \hookrightarrow
L^1$ and the convolution
relations for Wiener amalgam
spaces in Lemma \ref{WA},
Item (i), we obtain
$$\|T_k\chi_{\mathcal{Q}}\ast|B|\|_{W(L^\infty,L^1)}\leq \|T_k\chi_{\mathcal{Q}}\|_{W(L^\infty,L^1)}\|B\|_{W(L^1,L^1)}\leq \|B\|_1,
$$
since $\|
T_k\chi_{\mathcal{Q}}\|_{W(L^\infty,L^1)}=1$
and $W(L^1,L^1)=L^1$. We are
left to estimate $\|B\|_1$.
Precisely,
\begin{align*}
\|B\|_1&=\intrd\intrd |\tau\Phi(\o,t)|  \,|\cF a(\o,t) |d\o dt\\
&\leq
\|\tau\Phi\|_{W(L^q,L^1)}\|\cF a\|_{W(L^{q'},L^\infty)}\\
&\leq\|\tau\Phi\|_{M^1}\|
a\|_{W(L^{1},L^q)}\\
&=\|\Phi\|_{M^1}\| a\|_{W(L^{1},L^q)}\\
&\lesssim
\|\f_1\|_{M^1}\|\f_2\|_{M^1}\|
a\|_{W(L^{1},L^q)},
\end{align*}
where we  used the inclusion
$\cF L^1\subset L^q$, which
holds locally and yields
$W(\cF L^1,L^1)\subset
W(L^q,L^1)$ (Lemma \ref{WA},
item (ii)) and the equality
$W(\cF L^1,L^1)=M^1$ and,
finally, \eqref{equivMp}.

Similar  arguments yield
$$\|K\|_{W(L^\infty L^{1},L^{1,\infty})}\lesssim \|\f_1\|_{M^1}\|\f_2\|_{M^1}\| a\|_{W(L^{1},L^q)}, $$
and the desired result
follows.
\end{proof}

\section{Necessary Boundedness Conditions for Localization Operators}
We need the following version
of Lemma \ref{l1} for Wiener
amalgam spaces.
\begin{lemma}\label{l2} With the notation of
Lemma \ref{l1}, we have, for
$q\geq 2$,
\[
\|\widehat{h_\lambda}\|_{W(L^p,L^q)}\lesssim
\lambda^{\frac{d}{q}-\frac{d}{2}},\qquad
\lambda\geq1.
\]
\end{lemma}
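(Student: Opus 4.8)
The plan is to reduce the Wiener amalgam norm of $\widehat{h_\lambda}$ to its plain $L^q$ norm, and then to invoke the upper bound \eqref{ub} already established in Lemma \ref{l1}. The crucial observation is that $f:=\widehat{h_\lambda}$ is band-limited with a band whose size does \emph{not} depend on $\lambda$: since $h_\lambda\in\cC^\infty_0(\rd)$ has support contained in that of $h$, the Fourier transform of $f$, namely $\Fur(\widehat{h_\lambda})=h_\lambda(-\cdot)$, is supported in $-\supp h$, hence in a fixed ball $B(0,R)$ with $R$ independent of $\lambda$. This is exactly the hypothesis needed to feed $f$ into Proposition \ref{new}.

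First I would treat the case $p\geq q$. Here Proposition \ref{new} applies directly: taking the local exponent to be $p$ and the global one to be $q$ (so that $q\leq p$ matches the ordering hypothesis of that proposition, whose local and global components appear in the opposite order to ours), and using that $\Fur f$ is supported in the fixed ball $B(0,R)$, one obtains $\|\widehat{h_\lambda}\|_{W(L^p,L^q)}\leq C_R\|\widehat{h_\lambda}\|_q$ with $C_R$ independent of $\lambda$. For the complementary range $p\leq q$ no band-limitedness is needed: the inclusion \eqref{lp} (Lemma \ref{WA} (ii)) gives $W(L^q,L^q)\hookrightarrow W(L^p,L^q)$, that is $\|\widehat{h_\lambda}\|_{W(L^p,L^q)}\lesssim\|\widehat{h_\lambda}\|_{W(L^q,L^q)}=\|\widehat{h_\lambda}\|_q$. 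In either case we arrive at $\|\widehat{h_\lambda}\|_{W(L^p,L^q)}\lesssim\|\widehat{h_\lambda}\|_q$, uniformly in $\lambda\geq1$.

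Finally I would insert the bound \eqref{ub} from Lemma \ref{l1}, valid for $q\geq2$, which yields $\|\widehat{h_\lambda}\|_q\lesssim\lambda^{\frac{d}{q}-\frac{d}{2}}$ and hence the claimed estimate. I do not expect any genuine analytic obstacle here, since all the heavy lifting is done by Proposition \ref{new} and Lemma \ref{l1}. The only points requiring care are bookkeeping ones: matching the order of the exponents in the Wiener amalgam spaces to the hypotheses of Proposition \ref{new}, and verifying that the band radius $R$---and therefore the constant $C_R$---is independent of $\lambda$, which is precisely what makes the final estimate uniform.
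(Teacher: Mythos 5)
Your argument is correct and is essentially identical to the paper's own proof: the case $p\leq q$ via the embedding $L^q\hookrightarrow W(L^p,L^q)$ together with \eqref{ub}, and the case $p>q$ via Proposition \ref{new} applied to the band-limited function $\widehat{h_\lambda}$ (whose Fourier transform is supported in the fixed compact set $-\supp h$, independently of $\lambda$). The only difference is that you spell out the bookkeeping the paper leaves implicit.
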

\begin{proof}
 When $p\leq q$, the result follows at once from Lemma
\ref{l1}, using the embedding
$L^q\hookrightarrow
W(L^p,L^q)$.\par When $p>q$
it suffices to apply
Proposition \ref{new} and
Lemma \ref{l1} again.
\end{proof}
\begin{proposition}\label{p1}
Let $\f_1,\f_2\in
\cC^\infty_0(\rd)$, $\chi\in
\cC^\infty(\rd)$, with
$\f_1(0)=\f_2(0)=\chi(0)=1$,
$\f_1\geq0,\ \f_2\geq0,\
\chi\geq0$. If the estimate
\begin{equation}\label{a00}
\|\chi \gaw f\|_{r}\leq C
\|a\|_{W(L^p,L^q)}\|f\|_{W(L^{s_1},L^{s_2})},\,\quad
\forall f\in \cS(\R^d),\
\forall a\in\cS(\R^{2d}),
\end{equation}
holds for some $1\leq
p,q,r,s_1,s_2\leq\infty$,
then
\begin{equation}\label{a01}
\frac{1}{q}\geq\frac{1}{2}-\frac{1}{r}.
\end{equation}
\end{proposition}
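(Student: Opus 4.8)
The plan is to feed the hypothesized estimate \eqref{a00} an explicit one-parameter family and read off the scaling in $\lambda$. I would take $f=h_\lambda$, the chirp of Lemma \ref{l1}, with $h\in\cC^\infty_0(\rd)$, $h\ge0$, $h(0)=1$ and $\supp h\subset\{\f_1>0\}$; test against the concentrating bump $g_\lambda(t)=\gamma(\lambda t)$, $\gamma\in\cC^\infty_0(\rd)$; and, crucially, choose the symbol to resonate with both,
\[
a:=F_\lambda=\overline{V_{\f_1}h_\lambda}\,V_{\f_2}(\chi g_\lambda)\in\cS(\rdd).
\]
This self-pairing choice collapses the bilinear form of \eqref{anti-Wickg} into a square: since $\chi$ is real,
\[
\la\chi\gaw h_\lambda,g_\lambda\ra=\la\gaw h_\lambda,\chi g_\lambda\ra=\la F_\lambda,\overline{V_{\f_1}h_\lambda}V_{\f_2}(\chi g_\lambda)\ra=\|F_\lambda\|_{L^2}^2.
\]
Bounding the left side by H\"older, $|\la\chi\gaw h_\lambda,g_\lambda\ra|\le\|\chi\gaw h_\lambda\|_r\|g_\lambda\|_{r'}$, and using \eqref{a00} with $\|a\|_{W(L^p,L^q)}=\|F_\lambda\|_{W(L^p,L^q)}$, the whole problem reduces to the scalar inequality
\[
\|F_\lambda\|_{L^2}^2\lesssim\|F_\lambda\|_{W(L^p,L^q)}\,\|h_\lambda\|_{W(L^{s_1},L^{s_2})}\,\|g_\lambda\|_{r'}.
\]

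It then remains to find the $\lambda$-behaviour of the four factors. Two are elementary: $|h_\lambda|=|h|$ is $\lambda$-independent, so $\|h_\lambda\|_{W(L^{s_1},L^{s_2})}\asymp1$ (this is why $s_1,s_2$ are absent from \eqref{a01}), and a dilation gives $\|g_\lambda\|_{r'}\asymp\lambda^{-d/r'}$. For the two norms of $F_\lambda$ I would determine the phase-space profiles of its factors. Because $\gamma$ has compact support and $\chi(0)=1$, rescaling $t\mapsto t/\lambda$ yields $V_{\f_2}(\chi g_\lambda)(x,\o)\asymp\lambda^{-d}\f_2(-x)\hat\gamma(\o/\lambda)$, a bump of height $\lambda^{-d}$ spread over the band $|\o|\lesssim\lambda$; and the stationary-phase asymptotics of the chirp STFT give $|V_{\f_1}h_\lambda(x,\o)|\asymp\lambda^{-d/2}|H_x(-\o/\lambda)|$, $H_x=h\,\overline{T_x\f_1}$, which occupies the same band — this spreading of mass to frequencies $|\o|\sim\lambda$ is exactly the mechanism behind Lemma \ref{l1}. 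Multiplying, $F_\lambda$ has height $\asymp\lambda^{-3d/2}$ on a set $K\times\{|\o|\lesssim\lambda\}$, with $K\subset\rd$ a fixed compact set, whose $\o$-section has measure $\asymp\lambda^d$. Hence $\|F_\lambda\|_{L^2}^2\asymp\lambda^{-3d}\lambda^d=\lambda^{-2d}$, while, the profile being roughly constant on unit cubes, $\|F_\lambda\|_{W(L^p,L^q)}\asymp\lambda^{-3d/2}(\lambda^d)^{1/q}=\lambda^{d/q-3d/2}$, independent of $p$.

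Substituting into the scalar inequality gives $\lambda^{-2d}\lesssim\lambda^{d/q-3d/2}\lambda^{-d/r'}$; letting $\lambda\to+\infty$ forces $-2\le\tfrac1q-\tfrac32-\tfrac1{r'}$, i.e. $\tfrac1q\ge\tfrac1{r'}-\tfrac12=\tfrac12-\tfrac1r$, which is \eqref{a01}. The main obstacle is the phase-space analysis of $F_\lambda$: one must establish the chirp-STFT asymptotics as genuine two-sided bounds, uniformly for $x$ in a fixed neighbourhood of the origin, and secure the non-degeneracy that makes the lower bound on $\|F_\lambda\|_{L^2}^2$ real. This is precisely where the hypotheses enter: $\f_1(0)=1,\ \f_1\ge0$ let me place $\supp h$ inside $\{\f_1>0\}$, so that $H_x\ge0$ has nonempty interior for all small $x$, while $\f_2(0)=1$ and $\chi(0)=1$ keep the leading STFT terms from vanishing near $x=0$. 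A pleasant feature of the square choice $a=F_\lambda$ is that it never invokes duality of Wiener amalgam spaces, so the argument — being built on two-sided bounds of essentially piecewise-constant profiles — goes through uniformly for all $p,q\in[1,\infty]$, including the endpoints.
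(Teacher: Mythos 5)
Your argument reaches \eqref{a01} by a genuinely different route from the paper's, and the scaling computation at its core is right. The paper takes the product symbol $a_\lambda(x,\o)=h(x)(\Fur^{-1}h_\lambda)(\o)$ and the input $f=\overline{h_\lambda}$, bounds $\|a_\lambda\|_{W(L^p,L^q)}\lesssim\lambda^{\frac dq-\frac d2}$ by Lemma \ref{l2} (so only the \emph{upper} oscillatory bound \eqref{ub} is ever needed), and obtains the lower bound $\|\chi A_{a_\lambda}^{\f_1,\f_2}\overline{h_\lambda}\|_r\gtrsim\lambda^{-d/r}$ by an elementary positivity argument on the integral kernel: the chirps combine into the factor $e^{2\pi i\lambda xy}$, whose real part is $\geq\frac12$ for $|x|\leq\lambda^{-1}$ and $y$ in the bounded support of the kernel, so the output is bounded below near the origin by continuity. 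Your self-paired symbol $a=F_\lambda=\overline{V_{\f_1}h_\lambda}\,V_{\f_2}(\chi g_\lambda)$ instead reduces everything to the phase-space profile of $F_\lambda$, which makes the threshold $\frac1q=\frac12-\frac1r$ transparent, but at the price of needing genuine \emph{lower} bounds on chirp STFTs — exactly the obstacle you flag. Two points there need care. First, the pointwise two-sided claims $|V_{\f_1}h_\lambda(x,\o)|\asymp\lambda^{-d/2}|H_x(-\o/\lambda)|$ and $|V_{\f_2}(\chi g_\lambda)(x,\o)|\asymp\lambda^{-d}|\f_2(-x)\hat\gamma(\o/\lambda)|$ are false as stated (the lower bounds fail at the zeros of the limiting profiles, and $\hat\gamma$ is not compactly supported, so there is no exact band $|\o|\lesssim\lambda$); what stationary/non-stationary phase actually gives, and what suffices, is (i) the uniform upper bounds $\lesssim\lambda^{-d/2}$ and $\lesssim\lambda^{-d}$ together with rapid decay for $|\o|\gg\lambda$ — without this tail estimate the bound $\|F_\lambda\|_{W(L^p,L^q)}\lesssim\lambda^{\frac dq-\frac{3d}2}$ is not justified — and (ii) lower bounds on a fixed product set $U\times\lambda V$ of measure $\asymp\lambda^d$, where $U,V$ are small neighbourhoods of the origin on which $\f_2$, $\hat\gamma$ and $H_x$ are simultaneously bounded away from zero (this is where $h(0)=\f_1(0)=\f_2(0)=1$ and $\hat\gamma(0)\neq0$ enter). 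With these supplied, $\lambda^{-2d}\lesssim\lambda^{\frac dq-\frac{3d}2}\cdot\lambda^{-d/r'}$ closes the argument exactly as you wrote. On balance the paper's route is technically lighter, since it never needs a lower bound on an oscillatory integral; yours is more systematic, exploits the weak formulation \eqref{anti-Wickg} directly, and explains structurally why $s_1,s_2$ and $p$ drop out of the conclusion.
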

\begin{proof}
We can assume $q\geq2$,
otherwise the conclusion is
trivially true.
 Consider a
real-valued function
$h\in\cC^\infty_0(\R^d)$,
with $h(0)=1$, $h\geq0$. Let
\[
h_\lambda(x)=h(x) e^{-\pi
i\lambda|x|^2}, \qquad
\lambda\geq1.
\]
We test the estimate
\eqref{a00} on
$f=\overline{h_\lambda}$ and
\[
a(x,\o)=a_\lambda(x,\o)=h(x)
(\Fur^{-1}h_\lambda)(\o).
\]
Clearly,
$\|h_\lambda\|_{W(L^{s_1},L^{s_2})}$
is independent of $\lambda$.
On the other hand, by Lemma
\ref{l2} we have
\[
\|a_\lambda\|_{W(L^p,L^q)}=
\|h\|_{W(L^p,L^q)}\|\Fur^{-1}h_\lambda\|_{W(L^p,L^q)}
\lesssim
\lambda^{\frac{d}{q}-\frac{d}{2}}.
\]
Hence, if we prove that
\begin{equation}\label{b2}
\|\chi
A_{a_\lambda}^{\f_1,\f_2}
f\|_{r}\gtrsim
\lambda^{-\frac{d}{r}},
\end{equation}
then \eqref{a01} follows from
\eqref{a00}, by letting
$\lambda\to+\infty$.\par Let
us verify \eqref{b2}. It
suffices to prove that
\[
|\chi(x)
(A_{a_\lambda}^{\f_1,\f_2}
\overline{h_\lambda})(x)|\gtrsim1,\qquad
{\rm for}\ |x|\leq
\lambda^{-1},
\]
and for all
$\lambda\geq\lambda_0$ large
enough. To this end, observe
that, by \eqref{kernelaw0},
$A_{{a_\lambda}}^{\f_1,\f_2}$
is an integral operator with
kernel
\[
K(x,y)=\int h(t)h(y-x)e^{-\pi
i\lambda|y-x|^2}
{\f_1(y-t)}\f_2(x-t)\,dt.
\]
Hence
\[
|\chi(x)
(A_{a_\lambda}^{\f_1,\f_2}
\overline{h_\lambda})(x)|=\left|\chi(x)\iint
e^{2\pi i \lambda xy}
h(t)h(y-x) {\f_1(y-t)}
\f_2(x-t)h(y)\,dt\,dy\right|.
\]
Since $\f_1,\f_2$ have
compact support, $K(x,y)$ is
identically zero if
$|x-y|\geq C$, for a
convenient $C>0$. Moreover,
we are considering the last
integral for $|x|\leq
\lambda^{-1}\leq1$ only, so
that we can in fact integrate
over $|y|\leq C+1$ only. On
the other hand, we have
\[
{\rm Re}\, e^{2\pi i \lambda
xy}=\cos(2\pi\lambda
xy)\geq\frac{1}{2},\ {\rm
if}\ |x|\leq \lambda^{-1},\
|y|\leq C+1,
\]
for all
$\lambda\geq\lambda_0$ large
enough. It follows that
\begin{multline}
\left|\chi(x)\iint e^{2\pi i
\lambda xy} h(t)h(y-x)
{\f_1(y-t)}
\f_2(x-t)h(y)\,dt\,dy\right|\\
\geq
\frac{1}{2}\chi(x)\left|\iint
h(t)h(y-x) {\f_1(y-t)}
\f_2(x-t)h(y)\,dt\,dy\right|.
\end{multline}
Now, this last expression is
clearly bounded from below by
a constant $\delta>0$, for
$x$ in a neighbourhood of the
origin. This follows by
continuity, since for $x=0$
the integral is strictly
positive by our assumptions
on $\f_1,\f_2,h$, and
$\chi(0)=1$.\par This
concludes the proof.
\end{proof}

\begin{theorem}\label{p3}
Let $\f_1,\f_2\in
\cC^\infty_0(\rd)$, with
$\f_1(0)=\f_2(0)=1$,
$\f_1\geq0,\ \f_2\geq0$. If,
for some $1\leq
p,q,r,s\leq\infty$, the
estimate
\begin{equation}\label{a0}
\|\gaw f\|_{W(L^r,L^s)}\leq C
\|a\|_{W(L^p,L^q)}\|f\|_{W(L^r,L^s)},\,\quad
\forall f\in \cS(\R^d),\
\forall a\in\cS(\R^{2d}),
\end{equation}
holds, then
\begin{equation}\label{a1}
\frac{1}{q}\geq\left|\frac{1}{r}-\frac{1}{2}\right|.
\end{equation}
\end{theorem}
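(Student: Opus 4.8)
The plan is to split the target inequality \eqref{a1} into its two halves, $\tfrac1q\geq\tfrac12-\tfrac1r$ and $\tfrac1q\geq\tfrac1r-\tfrac12$, obtaining the first directly from Proposition \ref{p1} and the second from the first by duality. All the genuine analytic work—the chirp test functions $h_\lambda$ and the oscillatory lower bound \eqref{b2}—is already packaged inside Proposition \ref{p1}, so the task here reduces to two bookkeeping steps: a localization reduction to put \eqref{a0} into the form \eqref{a00}, and an adjoint argument to capture the second half of the absolute value.

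First I would derive $\tfrac1q\geq\tfrac12-\tfrac1r$ by feeding \eqref{a0} into Proposition \ref{p1}. Fix any $\chi\in\cC^\infty_0(\rd)$ with $\chi(0)=1$, $\chi\geq0$. Since $\chi\gaw f$ is supported in the fixed compact set $\supp\chi$, only finitely many unit cubes meet it, so that $\|\chi\gaw f\|_r\leq\|\chi\|_\infty\|\gaw f\|_{L^r(\supp\chi)}\lesssim\|\gaw f\|_{W(L^r,L^s)}$ (the local component of the amalgam norm is $L^r$, and on a finite index set the $\ell^r$ and $\ell^s$ norms are equivalent). Combined with \eqref{a0}, this is exactly \eqref{a00} with $(s_1,s_2)=(r,s)$, and Proposition \ref{p1} then yields \eqref{a01}, i.e.\ $\tfrac1q\geq\tfrac12-\tfrac1r$.

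Next, for $\tfrac1q\geq\tfrac1r-\tfrac12$ I would pass to the adjoint. From the weak definition \eqref{anti-Wickg} one checks $\langle\gaw f,g\rangle=\langle f,\gawbar g\rangle$ for $f,g\in\cS$, that is $(\gaw)^*=\gawbar$ (as already used for Theorem \ref{contwiener}), and of course $\|\bar a\|_{W(L^p,L^q)}=\|a\|_{W(L^p,L^q)}$. Using the norm-recovery identity $\|u\|_{W(L^{r'},L^{s'})}=\sup\{|\langle u,f\rangle|:f\in\cS,\ \|f\|_{W(L^r,L^s)}\leq1\}$ (valid for all exponents, in the spirit of Lemma \ref{WA}(iv)) together with \eqref{a0}, I would obtain the dual estimate
\[
\|\gawbar g\|_{W(L^{r'},L^{s'})}\leq C\,\|a\|_{W(L^p,L^q)}\,\|g\|_{W(L^{r'},L^{s'})},\qquad g\in\cS.
\]
Since $\f_1,\f_2$ enter the hypotheses symmetrically and $\bar a$ again runs over all of $\cS(\R^{2d})$, this is an instance of \eqref{a0} with the windows swapped and $(r,s)$ replaced by $(r',s')$. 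Applying the first step to it gives $\tfrac1q\geq\tfrac12-\tfrac1{r'}=\tfrac1r-\tfrac12$.

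Putting the two bounds together, $\tfrac1q\geq\max\{\tfrac12-\tfrac1r,\ \tfrac1r-\tfrac12\}=|\tfrac1r-\tfrac12|$, which is \eqref{a1}. The only delicate point—the main obstacle at this stage—is the duality/norm-recovery step at the endpoint exponents $r,s\in\{1,\infty\}$, where $\cS$ need not be dense in $W(L^r,L^s)$; this is manageable because each inequality is nontrivial only in a range ($r>2$ for the first, $r<2$ for the second) which, together with the closedness of the target inequality, lets one either stay in the interior where duality is unproblematic or dispose of the endpoint directly.
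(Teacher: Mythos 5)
Your proof is correct and follows essentially the same route as the paper: localize with a cutoff $\chi$ to reduce \eqref{a0} to the hypothesis \eqref{a00} of Proposition \ref{p1} (using the equivalence of the $W(L^r,L^s)$ and $L^r$ norms on compactly supported functions), and obtain the other half of the absolute value via the adjoint identity $(\gaw)^*=A^{\f_2,\f_1}_{\bar a}$. The paper phrases this as a case split ($r\geq2$ directly, $r<2$ by duality) rather than as proving both halves and taking the maximum, but the two formulations are logically identical; your extra care about the norm-recovery step at endpoint exponents is a point the paper passes over silently.
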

\begin{proof}
 We can suppose $q\geq2$ (otherwise
\eqref{a1} is trivially
satisfied). We assume $r\geq
2$, the case $1\leq r<2$
follows by duality, for
$(\gaw)^*=A^{\f_2,\f_1}_{\bar
a}$.\par
  Let $\chi\in
\cC^\infty_0(\R^d)$,
$\chi\geq 0$, $\chi(0)=1$.
Then, \eqref{a0} implies that
\[
\|\chi \gaw
f\|_{W(L^r,L^s)}\leq C
\|a\|_{W(L^p,L^q)}\|f\|_{W(L^r,L^s)}\,\quad
\forall f\in \cS(\R^d),\
\forall a\in\cS(\R^{2d}).
\]
For functions $u$ supported
in a fixed compact subset we
have
$\|u\|_{W(L^r,L^s)}\asymp
\|u\|_{r}$, so that we have
\[
\|\chi\gaw f\|_{r}\leq C
\|a\|_{W(L^p,L^q)}\|f\|_{W(L^r,L^s)},\,\quad
\forall f\in \cS(\R^d),\
\forall a\in\cS(\R^{2d}).
\]
Then \eqref{a1} follows from
Proposition \ref{p1}.
\end{proof}
\begin{theorem}\label{p4}
Let $\f_1,\f_2\in
\cC^\infty_0(\rd)$, with
$\f_1(0)=\f_2(0)=1$,
$\f_1\geq0,\ \f_2\geq0$.
Assume that for some $1\leq
p,q,r,s\leq\infty$ and every
$a\in W(L^p,L^q)(\rdd)$ the
operator $\gaw$ is bounded on
$W(L^r,L^s)(\R^d)$. Then
\eqref{a1} must hold.
\end{theorem}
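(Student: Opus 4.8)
The plan is to deduce the \emph{uniform} estimate \eqref{a0} from the merely pointwise (in $a$) boundedness assumption, and then to invoke Theorem \ref{p3}. The passage from pointwise to uniform boundedness is a Banach--Steinhaus argument, which I would phrase through the bilinear map
\[
\mathcal{B}\colon W(L^p,L^q)(\rdd)\times W(L^r,L^s)(\rd)\longrightarrow W(L^r,L^s)(\rd),\qquad \mathcal{B}(a,f)=A_a^{\f_1,\f_2}f,
\]
using the classical fact that a separately continuous bilinear map between Banach spaces is jointly continuous.

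Separate continuity in the second variable is precisely the hypothesis: for each fixed $a\in W(L^p,L^q)$ the operator $A_a^{\f_1,\f_2}$ is bounded on $W(L^r,L^s)$. For separate continuity in the first variable I would fix $f\in W(L^r,L^s)$ and show that $a\mapsto A_a^{\f_1,\f_2}f$ is bounded via the closed graph theorem. So suppose $a_n\to a$ in $W(L^p,L^q)$ and $A_{a_n}^{\f_1,\f_2}f\to u$ in $W(L^r,L^s)$; I must check that $u=A_a^{\f_1,\f_2}f$. Testing against an arbitrary $g\in\cS(\rd)$ and using the weak definition \eqref{anti-Wickg},
\[
\la A_{a_n}^{\f_1,\f_2}f,g\ra=\la a_n,\overline{V_{\f_1}f}\,V_{\f_2}g\ra .
\]
The decisive observation is that $\overline{V_{\f_1}f}\,V_{\f_2}g\in\cS(\rdd)$: indeed $V_{\f_1}f$ is smooth and, together with all its derivatives, of polynomial growth (being the STFT of the tempered distribution $f$ against a Schwartz window), while $V_{\f_2}g\in\cS(\rdd)$, so their product is Schwartz. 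By the pointwise product inclusion \eqref{point0} the functional $a\mapsto\la a,\overline{V_{\f_1}f}\,V_{\f_2}g\ra$ is continuous on $W(L^p,L^q)$, whence the right-hand side tends to $\la a,\overline{V_{\f_1}f}\,V_{\f_2}g\ra=\la A_a^{\f_1,\f_2}f,g\ra$. On the other hand $A_{a_n}^{\f_1,\f_2}f\to u$ in $W(L^r,L^s)\hookrightarrow\cS'(\rd)$ gives $\la A_{a_n}^{\f_1,\f_2}f,g\ra\to\la u,g\ra$. Hence $\la u,g\ra=\la A_a^{\f_1,\f_2}f,g\ra$ for every $g\in\cS(\rd)$, and since $W(L^r,L^s)\subset\cS'(\rd)$ this forces $u=A_a^{\f_1,\f_2}f$. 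The graph is therefore closed and $a\mapsto A_a^{\f_1,\f_2}f$ is bounded.

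Having checked separate continuity, the uniform boundedness principle yields a constant $C>0$ such that $\|A_a^{\f_1,\f_2}f\|_{W(L^r,L^s)}\le C\|a\|_{W(L^p,L^q)}\|f\|_{W(L^r,L^s)}$ for all $a$ and $f$; restricting to $a\in\cS(\rdd)$, $f\in\cS(\rd)$ this is exactly \eqref{a0}, and Theorem \ref{p3} then gives \eqref{a1}. I expect the closed-graph verification to be the only real obstacle, and within it the point that $\overline{V_{\f_1}f}\,V_{\f_2}g$ lies in $W(L^{p'},L^{q'})(\rdd)$, against which $W(L^p,L^q)$ pairs continuously. Working with the bilinear map, rather than with the closed graph of $a\mapsto A_a^{\f_1,\f_2}$ directly into $B(W(L^r,L^s))$, is what lets me avoid the failure of density of $\cS$ in $W(L^r,L^s)$ when $r=\infty$ or $s=\infty$, since I only ever need to identify the limit $u$ as an element of $\cS'(\rd)$.
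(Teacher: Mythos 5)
Your argument is correct, and it reaches \eqref{a0} by a genuinely different route than the paper. The paper applies the closed graph theorem to the operator-valued map $a\mapsto \gaw\in B(W(L^r,L^s))$; identifying the limit operator there requires testing on $f,g\in\cS(\rd)$ and then invoking density of $\cS$ in $W(L^r,L^s)$, which fails when $r=\infty$ or $s=\infty$. Consequently the paper must treat those endpoint cases separately, by contradiction: complex interpolation with the $(r,s)=(2,2)$ case (Proposition \ref{bo} with $q=\infty$, or rather the $L^2$-boundedness) produces finite exponents $\tilde r,\tilde s$ still violating \eqref{a1}, contradicting the first part. Your version factors the same uniform-boundedness content through the bilinear map $\mathcal{B}(a,f)=\gaw f$: separate continuity in $f$ is the hypothesis, separate continuity in $a$ follows from a closed graph argument in which the limit is identified only in $\cS'(\rd)$, and joint continuity of separately continuous bilinear maps on Banach spaces does the rest. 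This handles all of $1\le r,s\le\infty$ at once and dispenses with the interpolation step, at the price of invoking the (standard, Baire-category) bilinear uniform boundedness theorem. The limit identification itself is the same computation in both proofs: convergence of $\la a_n,\overline{V_{\f_1}f}\,V_{\f_2}g\ra$, justified in your case by $\overline{V_{\f_1}f}\,V_{\f_2}g\in\cS(\rdd)\subset W(L^{p'},L^{q'})$ and H\"older for Wiener amalgam spaces. The one point you should make explicit is that, for $f\in W(L^r,L^s)$ not in the closure of $\cS$, the symbol $\gaw f$ must be understood through the weak definition \eqref{anti-Wickg} extended to $f\in\cS'(\rd)$ (which is legitimate, since $V_{\f_1}f$ is then smooth and polynomially bounded); this is what makes $T_f\colon a\mapsto\gaw f$ an unambiguous everywhere-defined linear map and makes the identity $\la \gaw f,g\ra=\la a,\overline{V_{\f_1}f}\,V_{\f_2}g\ra$ available for non-Schwartz $f$. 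With that convention stated, the proof is complete.
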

\begin{proof}
Consider first the case
$1\leq r,s<\infty$. By
Theorem \ref{p3} and the
Closed Graph Theorem it
suffices to prove that the
map
\[
a\in W(L^p,L^q)\longmapsto
\gaw\in B(W(L^r,L^s))
\]
has closed graph. To this
end, consider a sequence
$a_n\to a$ in $W(L^p,L^q)$
and such that
$A_{a_n}^{\f_1,\f_2}\to A$ in
$B(W(L^r,L^s))$. We verify
that $\gaw=A$, that is
$\langle\gaw
f,g\rangle=\langle
Af,g\rangle$ for every
$f,g\in\cS(\R^d)$, because
$\cS(\R^d)$ is dense in
$W(L^r,L^s)$. Clearly,
$\langle A_{a_n}^{\f_1,\f_2}
f,g\rangle\to\langle
Af,g\rangle $. On the other
hand, since $V_{\f_1} f$,
$V_{\f_2} g$ are Schwartz
functions,
\[
\langle A_{a_n}^{\f_1,\f_2}
f,g\rangle=\int a_n(x,\o)
V_{\f_1}
f(x,\o)\overline{V_{\f_2}g}(x,\o)\,dx\,d\o\]
tends to
\[
\int a(x,\o) V_{\f_1}
f(x,\o)\overline{V_{\f_2}g}(x,\o)\,dx\,d\o=\langle
\gaw f,g\rangle.
\]
It remains to consider the
cases $r=\infty$ or
$s=\infty$. Here one can
argue by contradiction.
Namely, suppose that $\gaw$
is bounded on $W(L^r,L^s)$
for every $a\in W(L^p,L^q)$,
for some $q,r$ that do not
satisfy \eqref{a1}. Hence,
$\displaystyle\frac1q<\left|\frac1r-\frac12\right|$.
Suppose $r=\infty$, which
implies $q>2$. By
interpolating with the case
$(r,s)=(2,2)$ (being fixed
the pair $(p,q)$), one would
obtain the boundedness of
$\gaw$ for every $a\in
W(L^p,L^q)$, on certain
$W(L^{\tilde{r}},L^{\tilde{s}})$,
$\tilde{r},\tilde{s}<\infty$,
with $\tilde{r}$ arbitrarily
large, so that \eqref{a1}
(with $\tilde{r}$ in place of
$r$) fails. Similarly, for
$s=\infty$ and interpolating
with the case $(r,s)=(2,2)$,
one would obtain
  the
boundedness of $\gaw$ for
every $a\in W(L^p,L^q)$, on
certain
$W(L^{\tilde{r}},L^{\tilde{s}})$,
$\tilde{r},\tilde{s}<\infty$,
with $\frac1q<
\left|\frac{1}{\tilde{r}}-\frac12\right|$,
so that \eqref{a1} fails
again. This contradicts what
we showed in the first part
of the present proof.
\end{proof}

\section{Further results}
In this section we discuss a
slight improvement of Theorem
\ref{contlp}, in the special
case $p=q$, i.e., for symbols
$a\in L^q$. Precisely, we
consider windows in Lebesgue
spaces only, rather than in
$M^1$. To state the outcome
of our study,  we need the
following definition. A
measurable function $F$ on
$\rdd$ belongs to the space
$W(L^{p_1},L^{q_1})\Fur
W(L^{p_2},L^{q_2})(\rdd)$ if the
mapping $x\longmapsto
\|F(x,\cdot)\|_{\Fur
W(L^{p_2},L^{q_2})}$ is in
the Wiener amalgam space
$W(L^{p_1},L^{q_1})(\rd)$, with
norm
$$\| F\|_{W(L^{p_1},L^{q_1})\Fur W(L^{p_2},L^{q_2})}=\|\,\|F(x,\cdot)\|_{\Fur
W(L^{p_2},L^{q_2})}\|_{W(L^{p_1},L^{q_1})}.
$$
Then our first result reads
as follows.
\begin{proposition}\label{contlp2}
Let $1\leq p_1,p_2,q_1,q_2,
r\leq\infty$. Let
\[
a\in W(L^{p_1},L^{q_1}) \Fur
W(L^{p_2},L^{q_2})(\rdd),
\]
and
\[
\f_1,\f_2\in
 \cN(\rd):=W(L^{p'_1},L^{q'_1})(\rd)\cap
W(L^{p'_2},L^{q'_2})(\rd).
\] Then
$\gaw$ is bounded on
$L^r(\Ren)$, with the uniform
estimate
\begin{equation}\label{bdlp2}\|\gaw\|_{B(L^r)}
\lesssim
\|a\|_{W(L^{p_1},L^{q_1})_x
\Fur
W(L^{p_2},L^{q_2})_\omega}\|\f_1\|_
{\cN}\|\f_2\|_{\cN}.\end{equation}
\end{proposition}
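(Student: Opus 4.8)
The plan is to follow the same strategy as in Theorems \ref{contlp} and \ref{contwiener}: realize $\gaw$ as the integral operator $D_K$ with the kernel computed in \eqref{kernelaw0}--\eqref{kernelaw}, namely
\[
K(x,y)=\intrd \cF_2 a(t,y-x)\,\overline{\f_1(y-t)}\,\f_2(x-t)\,dt,
\]
and then apply the classical Schur test. Since we only want continuity on the Lebesgue spaces $L^r$, it is enough to verify condition \eqref{cond2}, i.e.\ $K\in L^{1,\infty}\cap L^\infty L^1$; concretely, I would bound the two quantities $\sup_y\intrd|K(x,y)|\,dx$ and $\sup_x\intrd|K(x,y)|\,dy$ by $\|a\|_{W(L^{p_1},L^{q_1})\Fur W(L^{p_2},L^{q_2})}\|\f_1\|_\cN\|\f_2\|_\cN$, uniformly in $y$ and $x$ respectively. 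Writing $b:=\cF_2a$, one has the pointwise estimate $|K(x,y)|\le\intrd|b(t,y-x)|\,|\f_1(y-t)|\,|\f_2(x-t)|\,dt$.

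For the first quantity I would fix $y$, use Fubini, and change variables $u=y-x$ in the $x$-integral, turning it into
\[
\intrd|K(x,y)|\,dx\le\intrdd|b(t,u)|\,\Psi_y(t,u)\,dt\,du,\qquad \Psi_y(t,u):=|\f_1(y-t)|\,|\f_2(y-t-u)|.
\]
The key point is that this is a Hölder pairing for the iterated mixed amalgam space $X:=W(L^{p_1},L^{q_1})_tW(L^{p_2},L^{q_2})_u$ and its conjugate $X':=W(L^{p_1'},L^{q_1'})_tW(L^{p_2'},L^{q_2'})_u$. Applying the Wiener amalgam Hölder inequality (which follows at once from the box-norm \eqref{wienerdis}, first in $u$ and then in $t$) gives the bound $\|b\|_X\|\Psi_y\|_{X'}$. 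By the definition of the symbol space, $\|b\|_X=\|a\|_{W(L^{p_1},L^{q_1})\Fur W(L^{p_2},L^{q_2})}$, since the amalgam norm is invariant under the reflection relating $\cF_2$ and $\cF_2^{-1}$.

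It remains to control $\|\Psi_y\|_{X'}$ uniformly in $y$. Here I would exploit the translation and reflection invariance of Wiener amalgam norms: for fixed $t$ the map $u\mapsto|\f_2(y-t-u)|$ is a reflected translate of $\f_2$, so $\|\Psi_y(t,\cdot)\|_{W(L^{p_2'},L^{q_2'})}=|\f_1(y-t)|\,\|\f_2\|_{W(L^{p_2'},L^{q_2'})}$, and then $t\mapsto|\f_1(y-t)|$ is a reflected translate of $\f_1$, giving $\|\Psi_y\|_{X'}=\|\f_1\|_{W(L^{p_1'},L^{q_1'})}\|\f_2\|_{W(L^{p_2'},L^{q_2'})}\le\|\f_1\|_\cN\|\f_2\|_\cN$, independently of $y$. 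The second quantity $\sup_x\intrd|K(x,y)|\,dy$ is handled in exactly the same way after the substitution $u=y-x$ (now for fixed $x$): the roles of the two windows in the two amalgam norms get interchanged, producing the factor $\|\f_1\|_{W(L^{p_2'},L^{q_2'})}\|\f_2\|_{W(L^{p_1'},L^{q_1'})}$, which is again dominated by $\|\f_1\|_\cN\|\f_2\|_\cN$ precisely because $\cN$ is the intersection of the two amalgam spaces. Feeding both bounds into the classical Schur test (the condition \eqref{cond2} in Proposition \ref{proschur}, or \cite[Lemma 6.2.1]{book}) yields boundedness of $\gaw$ on every $L^r$, $1\le r\le\infty$, with the estimate \eqref{bdlp2}. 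The only genuinely delicate steps are justifying the iterated amalgam Hölder inequality in the two endpoint-sensitive variables and making sure the change of variables matches the $t$-variable of $b$ to the $(p_1,q_1)$-norm and the $u$-variable to the $(p_2,q_2)$-norm; once the bookkeeping of these two ``directions'' is set up correctly, the uniform estimates follow immediately from invariance.
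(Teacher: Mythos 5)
Your proposal is correct and follows essentially the same route as the paper: the paper also writes $\gaw$ as the integral operator with kernel $K(x,y)=\int\cF_2 a(t,y-x)\,\overline{T_t\f_1(y)}\,T_t\f_2(x)\,dt$, verifies the classical Schur conditions $K\in L^\infty L^1\cap L^{1,\infty}$ by applying H\"older's inequality for Wiener amalgam spaces twice (first in the ``$u$'' variable against the $W(L^{p_2},L^{q_2})$-norm of $\cF_2 a(t,\cdot)$, then in $t$ against the $W(L^{p_1},L^{q_1})$-norm), and uses translation/reflection invariance exactly as you do, with the two Schur conditions swapping the roles of $\f_1$ and $\f_2$ — which is precisely why $\cN$ is the intersection of the two conjugate amalgam spaces. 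Your packaging of the two H\"older applications as a single pairing in the iterated mixed amalgam space is only a cosmetic difference.
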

\begin{proof}
The proof is similar to that
of Theorem \ref{contlp}, so
that we only present the main
points. We apply Schur's
test. Hence we verify that
the integral kernel \[
K(x,y)=\int\cF_2 a(t,y-x)
\overline{T_t\f
_1(y)}T_t\f_2(x)\,dt
\]
of $\gaw$ is in $L^\infty
L^1\cap L^{1,\infty}$.\par
Let us verify that $K\in
L^\infty L^1$, the other
condition is similar. By the
kernel expression above  and
H\"older's Inequality we have
\begin{align*}
\|K\|_{L^\infty
L^1}&\leq\sup_{x\in\R^d}\int
|T_t\f_2(x)|\int |T_t\f_1(y)|
|\cF_2 a(t,y-x)|\,dy\,dt\\
&\leq
\|\f_1\|_{W(L^{p'_2},L^{q'_2})}\sup_{x\in\R^d}\int
|T_t\f_2(x)|
\|a(t,\cdot)\|_{\Fur
W(L^{p_2},L^{q_2})}\,dt\\
&\leq
\|\f_1\|_{W(L^{p'_2},L^{q'_2})}\|\f_2\|_{W(L^{p'_1},L^{q'_1})}\|a\|_{W(L^{p_1},L^{q_1})_x\Fur
W(L^{p_2},L^{q_2})},
\end{align*}
which is the desired
estimate.
\end{proof}

Finally, we can prove the
boundedness result for
symbols in $L^q$.\par
\begin{theorem}\label{contlp3}
Let $\cN_q=L^q(\R^d)\cap
L^{q'}(\R^d)$ if $1\leq q\leq
2$ and $\cN_q=L^2(\R^d)$ if
$2<q\leq\infty$. Let $a\in
L^q(\rdd)$, $\f_1,\f_2\in
\cN_q$. Then $\gaw$ is
bounded on $L^r(\Ren)$, for
all $\frac1q \geq |\frac
1r-\frac12|$, with the
uniform estimate
\begin{equation}\label{bdlp3}\|\gaw\|_{B(L^r)}
\lesssim \|a\|_{q}\|\f_1\|_
{\cN_q}\|\f_2\|_{\cN_q}.\end{equation}
\end{theorem}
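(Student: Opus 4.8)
The plan is to split the argument according to the two regimes that define $\cN_q$, namely $1\le q\le 2$ and $2<q\le\infty$, and to reduce each of them to tools already at hand: Proposition \ref{contlp2} for the first regime, and an interpolation between two endpoints for the second.

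First, suppose $1\le q\le 2$. Here I would apply Proposition \ref{contlp2} with the choice $p_1=q_1=q$ and $p_2=q_2=q'$, so that the admissible window space becomes $\cN=W(L^{q'},L^{q'})\cap W(L^{q},L^{q})=L^{q'}\cap L^{q}=\cN_q$, exactly as required. It then remains to check that $L^q(\rdd)$ embeds into the symbol space $W(L^{q},L^{q})_x\,\Fur W(L^{q'},L^{q'})_\omega$ with $\|a\|_{W(L^{q},L^{q})_x \Fur W(L^{q'},L^{q'})_\omega}\lesssim\|a\|_{q}$. Since $W(L^q,L^q)=L^q$, this reduces to the pointwise (in $x$) estimate $\|a(x,\cdot)\|_{\Fur L^{q'}}=\|\Fur^{-1}a(x,\cdot)\|_{q'}\lesssim\|a(x,\cdot)\|_{q}$, which is precisely the Hausdorff--Young inequality for $1\le q\le 2$; integrating in $x$ with the $L^q$ norm gives the claim. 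Proposition \ref{contlp2} then yields boundedness of $\gaw$ on every $L^r$, and for $1\le q\le 2$ the constraint $\frac1q\ge|\frac1r-\frac12|$ is automatically satisfied, so this settles the whole regime.

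For $2<q\le\infty$ I would argue by interpolation between two endpoints, keeping the windows fixed in $\cN_q=L^2$. The endpoint $q=\infty$ is immediate: writing $\langle\gaw f,g\rangle=\langle a\,V_{\f_1}f,V_{\f_2}g\rangle$ and using the orthogonality relation \eqref{eql4}, one gets $|\langle\gaw f,g\rangle|\le\|a\|_\infty\|V_{\f_1}f\|_2\|V_{\f_2}g\|_2=\|a\|_\infty\|\f_1\|_2\|\f_2\|_2\|f\|_2\|g\|_2$, that is, $\gaw$ is bounded on $L^2$ with the desired constant. The endpoint $q=2$ is the case already treated above (with $\cN_2=L^2$), which gives boundedness on every $L^{r_0}$, $1\le r_0\le\infty$. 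Interpolating the trilinear form $(a,f,g)\mapsto\langle\gaw f,g\rangle$ between these two estimates with parameter $\theta=1-\tfrac2q\in[0,1)$ produces, for each admissible target $r$, the bound $|\langle\gaw f,g\rangle|\lesssim\|a\|_q\|f\|_r\|g\|_{r'}\|\f_1\|_2\|\f_2\|_2$, whence duality gives boundedness on $L^r$.

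The point that requires care — and is the real content of this regime — is the bookkeeping of the exponents, which I expect to be the main (though routine) obstacle. Along the interpolation one has $\frac1q=\frac{1-\theta}{2}$ and $\frac1r=\frac{1-\theta}{r_0}+\frac\theta2$, so for a prescribed $r$ the needed endpoint exponent $r_0$ is determined by $\frac1{r_0}=\frac{q}{2}\bigl(\frac1r-\frac12+\frac1q\bigr)$. A short computation shows that $\frac1{r_0}\in[0,1]$ — i.e.\ that $r_0\in[1,\infty]$ is a legitimate choice in the $q=2$ endpoint — holds precisely when $\frac1q\ge\frac1r-\frac12$ and $\frac1q\ge\frac12-\frac1r$, that is, exactly under the hypothesis $\frac1q\ge|\frac1r-\frac12|$. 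Thus the admissibility region in the statement is nothing but the set of target exponents reachable by this interpolation, and since the windows sit in $L^2$ at both endpoints, the constant is controlled by $\|\f_1\|_2\|\f_2\|_2=\|\f_1\|_{\cN_q}\|\f_2\|_{\cN_q}$ throughout. The only genuine subtlety is to phrase the interpolation at the level of the trilinear form (rather than of the operator-valued map $a\mapsto\gaw$), so as to avoid interpolating directly between the varying operator spaces $B(L^{r_0})$ and $B(L^2)$; passing through the form and then dualizing circumvents this cleanly.
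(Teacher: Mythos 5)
Your proposal is correct and follows essentially the same route as the paper: the case $1\le q\le 2$ via Proposition \ref{contlp2} with $p_1=q_1=q$, $p_2=q_2=q'$ and Hausdorff--Young, and the case $q>2$ by interpolating with the endpoint $q=\infty$, $r=2$. You merely spell out the exponent bookkeeping and the trilinear-form formulation of the interpolation, which the paper leaves implicit.
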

\begin{proof}
By interpolation with the
well-known case $q=\infty$,
$r=2$ \cite{Wong1999}, it
suffices to prove the desired
result for $q\leq2$. But this
follows from Proposition
\ref{contlp2} with
$p_1=q_1=q$, $p_2=q_2=q'$
(recall,
$W(L^{q},L^{q})=L^q$,
$W(L^{q'},L^{q'})=L^{q'}$),
combined with the
Hausdorff-Young inequality
$L^q\hookrightarrow\Fur
L^{q'}$.
\end{proof}

Observe that Theorem
\ref{contlp3} extends the
result contained in Theorem
3.3 of \cite{BOW06} to larger
index sets and this range is
sharp. Indeed, for  $s=r$ and
$p=q$, Theorems \ref{p3} and
\ref{p4} read as follows.
\begin{corollary}\label{p1lp}
Let $\f_1,\f_2\in
\cC^\infty_0(\rd)$, with
$\f_1(0)=\f_2(0)=1$,
$\f_1\geq0,\ \f_2\geq0$.
Assume that for some $1\leq
q,r\leq\infty$ and every
$a\in L^q(\rdd)$ the operator
$\gaw$ is bounded on
$L^r(\R^d)$, or that the
estimate
\begin{equation}\label{a00lp}
\|\gaw f\|_{r}\leq C
\|a\|_{q}\|f\|_{r},\,\quad
\forall f\in \cS(\R^d),\
\forall a\in\cS(\R^{2d}),
\end{equation}
holds. Then
\begin{equation}\label{a01lp}
\frac{1}{q}\geq\left|\frac{1}{r}-\frac{1}{2}\right|.
\end{equation}
\end{corollary}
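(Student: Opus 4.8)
The plan is to recognize that Corollary \ref{p1lp} is nothing more than the specialization of Theorems \ref{p3} and \ref{p4} to the diagonal case $s=r$ and $p=q$, exploiting the elementary identity $W(L^t,L^t)=L^t$ valid for every $1\leq t\leq\infty$ (recorded already in the Introduction). The two alternative hypotheses in the corollary — the qualitative boundedness ``$\gaw$ bounded on $L^r$ for every $a\in L^q$'' and the quantitative uniform estimate \eqref{a00lp} — correspond respectively to the hypotheses of Theorem \ref{p4} and Theorem \ref{p3}, so I would handle them separately, each by a direct invocation of the appropriate theorem.

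First I would treat the uniform-estimate alternative. Setting $p=q$ and $s=r$, the norm $\|a\|_{W(L^p,L^q)}$ collapses to $\|a\|_{W(L^q,L^q)}=\|a\|_q$ and the norm $\|f\|_{W(L^r,L^s)}$ to $\|f\|_{W(L^r,L^r)}=\|f\|_r$; likewise $\|\gaw f\|_{W(L^r,L^s)}=\|\gaw f\|_r$. Hence the hypothesis \eqref{a0} of Theorem \ref{p3} reads exactly
\[
\|\gaw f\|_{r}\leq C\|a\|_{q}\|f\|_{r},\qquad \forall f\in\cS(\R^d),\ \forall a\in\cS(\R^{2d}),
\]
which is \eqref{a00lp}. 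Since the hypotheses on the windows $\f_1,\f_2$ (compactly supported, smooth, nonnegative, normalized by $\f_1(0)=\f_2(0)=1$) are identical in both statements, Theorem \ref{p3} applies verbatim and its conclusion \eqref{a1} is precisely the desired inequality \eqref{a01lp}.

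For the qualitative alternative I would argue in the same way via Theorem \ref{p4}. With $p=q$ and $s=r$, the assumption that $\gaw$ be bounded on $W(L^r,L^s)=L^r$ for every $a\in W(L^p,L^q)=L^q$ is exactly the specialized hypothesis of Theorem \ref{p4}, whose conclusion is again \eqref{a1}, i.e. \eqref{a01lp}. I do not anticipate any genuine obstacle here: the entire content of the corollary is already contained in the two theorems, and the only thing to check is the bookkeeping of the space norms under the substitutions $s=r$, $p=q$, together with the matching of each alternative hypothesis to the correct theorem. The one point worth stating explicitly, to avoid any ambiguity for the reader, is which of the two theorems supplies which implication.
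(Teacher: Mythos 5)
Your proposal is correct and coincides with the paper's own treatment: the corollary is stated there precisely as the specialization of Theorems \ref{p3} and \ref{p4} to $s=r$, $p=q$, using $W(L^t,L^t)=L^t$. Your matching of the quantitative hypothesis \eqref{a00lp} to Theorem \ref{p3} and the qualitative boundedness hypothesis to Theorem \ref{p4} is exactly what is intended.
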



\begin{thebibliography}{10}
\bibitem{Berezin71}
F.~A. Berezin.
\newblock Wick and anti-{W}ick symbols of operators.
\newblock {\em Mat. Sb. (N.S.)}, 86(128):578--610, 1971.

\bibitem{BCG02}
P.~Boggiatto, E.~Cordero, and
K.~Gr\"ochenig.
\newblock Generalized {A}nti-{W}ick operators with symbols in distributional
  {S}obolev spaces.
\newblock {\em Integral Equations and Operator Theory}, 48:427--442, 2004.


\bibitem{BOW06}
P.~Boggiatto, A. Oliaro and
M.W.~Wong.
\newblock $L^p$ boundedness and compactness of localization operators.
\newblock {\em J. Math. Anal. Appl.}, 322:193--206, 2006.

\bibitem{BW04}
P.~Boggiatto and M.W.~Wong.
\newblock Two-wavelets localization operators on $L^p(\R^n)$
for the Weyl-Heisenberg
group.
\newblock {\em Int. Equ. Oper. Theory}, 49:1--10, 2004.

\bibitem{CG02}
E.~Cordero and
K.~Gr\"ochenig.
 {T}ime-frequency analysis of {L}ocalization operators. \textit{ J. Funct. Anal.}, 205(1): 107--131, 2003.
\bibitem{CG04}
E.~Cordero and
K.~Gr\"ochenig,  Necessary
conditions for {S}chatten
class localization operators.
\textit{  Proc. Amer. Math.
Soc.}, 133: 3573--3579, 2005.
\bibitem{CN3}
E.~Cordero and F. Nicola.
\newblock {S}ome new {S}trichartz
estimates  for the
Schr\"odinger equation.
\newblock {\em J. Differential
Equations}, 245:1945--1974,
2008.

\bibitem{CK07}
E.~Cordero and K. Okoudjou.
\newblock Multilinear localization operators.
\textit{ J. Math. Anal.
Appl.}, 325 (2):1103--1116,
2007.
\bibitem{CR08}
E.~Cordero and L.~Rodino,
\newblock Short-Time Fourier Transform  Analysis of Localization Operators, \textit{ Cont. Math. }, 451:47--68, 2008.

\bibitem{CRNP}
E.~Cordero,  S.~Pilipovi\'c,
L.~Rodino and N.~Teofanov.
\newblock  Localization operators and exponential weights for \modsp s.
\newblock {\em Mediterr. J. Math.}, 2:381--394, 2005.

\bibitem{CF78}
A.~C{\'o}rdoba and
C.~Fefferman.
\newblock Wave packets and {F}ourier integral operators.
\newblock {\em Comm. Partial Differential Equations}, 3(11):979--1005, 1978.

 \bibitem{Daube88}
I.~Daubechies.
\newblock Time-frequency localization operators: a geometric phase space
  approach.
\newblock {\em IEEE Trans. Inform. Theory}, 34(4):605--612, 1988.
\bibitem{DeNo02}
 F. De Mari and K.~Nowak.
\newblock {L}ocalization type {B}erezin-{T}oeplitz operators on bounded symmetric domains.
\newblock  {\em J. Geom. Anal.}, 12:9--27, 2002.


\bibitem{feichtinger80} H.~G.
Feichtinger.
\newblock Banach convolution algebras of {W}iener's type,
\newblock In {\em Proc. Conf. ``Function, Series, Operators",
 Budapest August 1980}, Colloq. Math. Soc. J\'anos Bolyai, 35,  509--524, North-Holland, Amsterdam,
 1983.

\bibitem{F1}  H.~G.~Feichtinger,
\newblock Modulation spaces on locally
compact abelian groups,
\newblock {\em Technical Report, University Vienna, 1983.} and also in
\newblock {\em Wavelets and Their Applications},
M. Krishna, R. Radha,  S.
Thangavelu, editors,
\newblock Allied Publishers, 99--140, 2003.

\bibitem{feichtinger83}
H.~G. Feichtinger.
\newblock Banach spaces of distributions of {W}iener's type and interpolation.
\newblock In {\em Proc. Conf. Functional Analysis and Approximation,
 Oberwolfach August 1980},  Internat. Ser. Numer. Math., 69:153--165. Birkh\"auser, Boston, 1981.
\bibitem{fe89-1}
H.~G. Feichtinger.
\newblock {A}tomic characterizations of modulation spaces through {G}abor-type
representations.
\newblock In  {\em {P}roc. {C}onf. {C}onstructive {F}unction {T}heory},  {\em {R}ocky {M}ountain {J}. {M}ath.}, 19:113--126, 1989.
\bibitem{feichtinger90}
H.~G. Feichtinger.
\newblock Generalized amalgams, with applications to {F}ourier transform.
\newblock {\em Canad. J. Math.}, 42(3):395--409, 1990.
\bibitem{Fei98}
H.~G. Feichtinger and
G.~Zimmermann.
\newblock {A} {B}anach space of test functions for {G}abor analysis.
\newblock In H.~G. Feichtinger and T. Strohmer, editors, {\em Gabor Analysis and Algorithms. Theory and Applications}, Applied and Numerical Harmonic Analysis, 123--170. Birkh\"auser, Boston, 1998.
 \bibitem{Fei-Now02}
H.~G. Feichtinger and
K.~Nowak.
\newblock A {F}irst {S}urvey of {G}abor {M}ultipliers.
\newblock In H.~G. Feichtinger and T.~Strohmer, editors, {\em Advances in
  {G}abor Analysis}. Birkh{\"a}user, Boston, 2002.
  \bibitem{fournier-stewart85}
J.~J.~F. Fournier and
J.~Stewart.
\newblock Amalgams of ${L}\sp p$ and $l\sp q$.
\newblock {\em Bull. Amer. Math. Soc. (N.S.)}, 13(1):1--21, 1985.
  \bibitem{folland89}
G.~B. Folland.
\newblock {\em Harmonic Analysis in Phase Space}.
\newblock Princeton Univ. Press, Princeton, NJ, 1989.
\bibitem{grobner}
P. Gr\"obner. Banachräume
Glatter Funktionen und
Zerlegungsmethoden. {\it
Thesis}, University of
Vienna, Vienna, 1983.
\bibitem{book}
K.~Gr{\"o}chenig.
\newblock {\em Foundations of Time-Frequency Analysis}.
\newblock Birkh\"auser, Boston, 2001.

\bibitem{Heil03}
C. Heil.
\newblock {A}n introduction to weighted {W}iener amalgams.
\newblock In M. Krishna, R. Radha and S. Thangavelu, editors, {\em
Wavelets and their
Applications}, 183--216.
Allied Publishers Private
Limited, 2003.

\bibitem{Lieb}
E.H.~Lieb.
\newblock Integral bounds for radar ambiguity functions and Wigner distributions.
\newblock {\em J. Math. Phys.}, 31):594--599, 1990.

\bibitem{kasso07} K.A. Okoudjou.
\newblock A Beurling-Helson type theorem for
 modulation spaces. {\it J. Funct. Spaces Appl.},
 to appear.
 Available at http://www.math.umd.edu/$\sim$kasso/publications.html.

\bibitem{RT93}
J.~Ramanathan and
P.~Topiwala.
\newblock Time-frequency localization via the {W}eyl correspondence.
\newblock {\em SIAM J. Math. Anal.}, 24(5):1378--1393, 1993.
\bibitem{Shubin91}
M.~A. Shubin.
\newblock {\em Pseudodifferential Operators and Spectral Theory}.
\newblock Springer-Verlag, Berlin, second edition, 2001.
\newblock Translated from the 1978 Russian original by Stig I. Andersson.
\bibitem{Toft04}
J.~Toft.
\newblock Continuity properties for modulation spaces, with applications to
  pseudo-differential calculus. {I}.
\newblock {\em J. Funct. Anal.}, 207(2):399--429, 2004.

\bibitem{Toftweight}
J.~Toft.
\newblock Continuity properties for modulation spaces, with applications
              to pseudo-differential calculus. {II}.
\newblock {\em Ann. Global Anal. Geom.}, 26(1):73--106, 2004.
\bibitem{triebel83}
H. Triebel,
\newblock  Modulation spaces on the {E}uclidean
$n$-spaces,
\newblock {Z. Anal. Anwendungen} 2 (1983), 443-457.

\bibitem{Weisz}
F.~Weisz.
\newblock Multiplier theorems for the short-time Fourier transform.
\newblock {\em Preprint}.
\bibitem{wolff}
T. H. Wolff.
\newblock {\it Lectures on harmonic analysis}.
Edited by I. Laba and C.
Shubin. University Lecture
Series, 29. American
Mathematical Society, RI,
2003.

\bibitem{Wong1999}
M.~W. Wong.
\newblock {\em Localization operators}.
\newblock Seoul National University Research Institute of Mathematics Global
  Analysis Research Center, Seoul, 1999.
\bibitem{Wong02}
M.~W. Wong.
\newblock {\em Wavelets Transforms and Localization Operators}, volume 136 of
  {\em Operator Theory Advances and Applications}.
\newblock Birkhauser, 2002.
\end{thebibliography}
\end{document}